\documentclass[alphabetic]{article}

%
%

\usepackage{enumerate, longtable}
\usepackage[english]{babel}
\usepackage[autostyle, english=american]{csquotes}
\MakeOuterQuote{"}
\usepackage{amsmath, amscd, amsfonts, amsthm, amssymb, latexsym, comment, stmaryrd, graphicx, dsfont}
\usepackage{xcolor}
\usepackage[all]{xy}
\usepackage{fullpage}
\usepackage[
        colorlinks,
        linkcolor=red,  citecolor=blue,
        backref
]{hyperref}

\usepackage{amstext} 
\usepackage{array}
\usepackage{amsrefs}

\newtheorem{theorem}{Theorem}[section]

\newtheorem*{theorem*}{Theorem}
\newtheorem*{informal_theorem*}{Informal Theorem}

\newtheorem{lem}[theorem]{Lemma}

\newtheorem{corollary}[theorem]{Corollary}
\newtheorem{prop}[theorem]{Proposition}
\newtheorem{proposition}[theorem]{Proposition}

\theoremstyle{definition}

\newtheorem{rem}[theorem]{Remark}

\numberwithin{equation}{section}

\newcommand{\psiof}[1]{\psi\left(#1\right)}
\newcommand{\Wonefp}{W_1(f,\psi,\Phi)}
\newcommand{\Wone}[1]{W_1(#1, \psi, \Phi)}

\newcommand{\Wtwofp}{{W_2(f,\psi,\Phi)}}
\newcommand{\Pe}{\mathcal{P}}
\newcommand{\F}{\mathbb{F}}
\newcommand{\fqb}{{\overline{\F}_q}}
\renewcommand{\th}{\theta}
\newcommand{\Z}{\mathbb{Z}}
\newcommand{\R}{\mathbb{R}}
\newcommand{\AS}{\mathcal{AS}}
\newcommand{\g}{\mathfrak{g}}
\newcommand{\ord}{\mathrm{ord}}

\newcommand{\odd}{\mathrm{odd}}
\newcommand{\PP}{\mathbb{P}}

\newcommand{\floor}[1]{{\left\lfloor #1\right\rfloor}}

\newcommand{\case}[1]{\vspace{0.3cm}\noindent\framebox{#1.}}
\newcommand{\maincase}[1]{{\setlength{\fboxrule}{1.5pt}\vspace{0.3cm}\noindent\framebox{#1.}}}

\makeatother

\usepackage{babel}

\begin{document}
\global\long\def\field{\mathbb{F}}

\global\long\def\remark{\textbf{Remark.}}

\global\long\def\Scal{\mathcal{S}}
\global\long\def\ASdg{\AS_{d,g}^{\mathrm{ord}}}

\global\long\def\vol{\mathrm{vol}}

\global\long\def\pr{\mathrm{pr}}
\global\long\def\Lcp{L(u,C,\psi)}

\global\long\def\tr{\mathrm{tr}}
\global\long\def\USp{\mathrm{USp}}
\global\long\def\hd{\mathcal{H}_d}
\global\long\def\prgrp#1{{#1^\pr}}

\global\long\def\hg{\mathcal{H}_g}
\global\long\def\d{\mathrm{d}}
\global\long\def\etarhq#1{\eta(#1;H_Q^\perp)}

\global\long\def\vonmongchi#1{\sum_{\deg(h)=#1}\Lambda(h)\chi(h)}

\global\long\def\mhg{M_{\hg}}
\global\long\def\mean#1{\left\langle#1\right\rangle}
\global\long\def\tmean#1{\langle#1\rangle}
\global\long\def\P{\mathcal{P}}

\global\long\def\Gal{\mathrm{Gal}}

\global\long\def\Z{\mathbb{Z}}
\global\long\def\roots{\mathrm{roots}}

\global\long\def\asfam{{\AS_d^0}}
\global\long\def\ASord{\AS_d^{\mathrm{ord}}}
\global\long\def\oasfam{\AS_d^{0,\odd}}
\global\long\def\Lfp{L(u,f,\psi)}
\global\long\def\LCf{L(u,C_f)}
\global\long\def\R{\mathbb{R}}
\global\long\def\reals{\mathbb{R}}
\global\long\def\N{\mathbb{N}}
\global\long\def\C{\mathbb{C}}
\global\long\def\Tfr#1{{T_{f,\psi}^{#1}}}
\global\long\def\H{\mathcal{H}}

\title{Local Statistics for Zeros of Artin-Schreier $L$-functions}
\author{Alexei Entin and Noam Pirani}
\date{}
\maketitle

\begin{abstract}

We study the local statistics of zeros of $L$-functions attached to Artin-Scheier curves over finite fields. We consider three families of Artin-Schreier $L$-functions: the ordinary, polynomial (the $p$-rank 0 stratum) and odd-polynomial families. We compute the 1-level zero-density of the first and third families and the 2-level density of the second family for test functions with Fourier transform supported in a suitable interval. In each case we obtain agreement with a unitary or symplectic random matrix model.

\end{abstract}

\section{Introduction}

According to the Katz-Sarnak philosophy \cite{KaSa99a} (extending observations of Montgomery \cite{Mon73} and Odlyzko \cite{Odl87} regarding the Riemann Zeta-function) with every natural family of $L$-functions one may associate a random matrix model such that statistically the zeros of a random $L$-function drawn from the family should behave like the eigenvalues of a random matrix drawn from a corresponding classical ensemble (which is always unitary, symplectic or orthogonal\footnote{For some families one needs to discard certain fixed zeros at the critical point for this to hold, see \cite{Mil06,DHP15}.}). In the present work we will obtain further evidence that this philosophy applies in the case of three natural families of Artin-Schreier $L$-functions by studying the \emph{local statistics} of their zeros. We consider the ordinary, polynomial (or $p$-rank 0) and odd polynomial families and establish partial results consistent with a unitary (ordinary and polynomial cases) or symplectic (odd polynomial case) random matrix model.

Previously the local zero statistics for the polynomial Artin-Schreier family were studied by the first author \cite{Ent12}. The \emph{mesoscopic} statistics for the ordinary and polynomial families (as well as other $p$-rank strata) were studied by Bucur, David, Feigon, Lal\'in and Sinha in \cites{BDFLS12,BDFL16}, however the local regime is usually more challenging than the mesoscopic one. In the present work we improve the results of \cite{Ent12} in the polynomial case and establish results on local statistics for the ordinary and odd polynomial cases for the first time (the latter case is particularly interesting because it involves symplectic statistics). We note that all of our results apply in the fixed finite field regime. In the large finite field limit (with degrees slowly going to infinity) at least our first two results follow (and without restriction on the test functions) from deep equidistribution results of Katz \cite[Theorems 3.9.2 and 3.10.7]{Kat05} building on the work of Katz and Sarnak \cite{KaSa99}. 
Other families of $L$-functions over function fields for which local statistics have been studied in the fixed $q$ regime include hyperelliptic \cite{Rud10,Rod12,ERR13,BuFl18,BaJu19}, Dirichlet \cite{AMPT14}, cyclic $\ell$-covers and noncyclic cubic covers of $\mathbb P^1_{\F_q}$ \cite{BCDGL18, Mei22} and twists of elliptic curves over $\F_q(t)$ \cite{Com22}.
The corresponding literature over number fields is more extensive, see e.g. \cite{DaGu21}*{p. 2} for a list of references.

Let $p>2$ be a prime, $q$ a power of $p$ and $\F_q$ the field with $q$ elements. An Artin-Schreier (henceforth abbreviated as A-S) curve over $\F_q$ is given by the affine equation $y^p-y=f(x)$, where $f\in\F_q(x)\setminus\F_q$ is a nonconstant rational function which cannot be written as $f=h^p-h$ with $h\in\F_q(x)$. Denote by $C_f$ the smooth projective model of the affine curve $y^p-y=f(x)$. The curve $C_f$ is always absolutely irreducible and comes equipped with a cover $C_f\to\mathbb P^1_{\F_q}$ given by the projection $(x,y)\mapsto x$. We call such a cover an A-S cover. If a curve $C/\F_q$ is isomorphic to $C_f$ with $f\in\F_q[x]$ a polynomial we say that $C$ is a polynomial A-S curve.

For a (smooth, projective, irreducible) curve $C$ we denote by $\g(C)$ its genus. Another important invariant associated with a curve $C/\F_q$ is its \emph{$p$-rank}, which is defined to be the $\Z/p$-rank of $\mathrm{Jac}(C_f\times\overline{\F}_q)[p]$. It is a nonnegative integer $\le\g(C)$. A curve with $p$-rank $\g(C)$ is called \emph{ordinary}. An A-S curve is polynomial (in the sense of the previous paragraph) iff it has $p$-rank 0. For background on Artin-Schreier curves and their $p$-rank stratification see \cite{PrZh12}.

The zeta function of $C_f$ is defined by
\begin{equation}
Z_{C_f}(u)=\exp\left[{\sum_{r=1}^\infty}N_r(C_f)\frac{u^r}{r}\right],
\end{equation}
where $N_r(C_f)=\#C_f(\F_{q^r})$ is the number of projective $\F_{q^r}$-points of $C_f$,
and it is known that 
\begin{equation*}
Z_{C_f}(u)=\frac{L(u,C_f)}{(1-u)(1-qu)},
\end{equation*}
where $L(u,C_f)$, called the $L$-function of $C_f$, is a polynomial of degree $2\g(C_f)$. From this it follows that
$$
L(u,C_f)=\exp\left[\sum_{r=1}^{\infty}\frac{N_r(C_f)-q^r-1}{r}u^r\right].
$$

Denote by $\Psi$ the set of non-trivial additive characters of $\field_{p}$, and denote by $\tr_{q/p}$ the trace map $\tr_{\field_q/\field_p}$ (for a $p$-power $q$). Note that $|\Psi|=p-1$. For an A-S curve $C_f$ it is known that $L(u,C_f)$ factors into primitive $L$-functions as follows: 
\begin{equation}\label{eq:LuCf}
L(u,C_f)=\prod_{\psi\in\Psi} L(u,f,\psi),
\end{equation}
with
\begin{equation}\label{eqn_LfpsiDef}
L(u,f,\psi)=\exp\left[\sum_{r=1}^\infty \sum_{\alpha\in\field_{q^r}\cup\{\infty\}\atop{f(\alpha)\neq\infty}}\psi(\tr_{q^r/p}f(\alpha))\frac{u^r}{r}\right]
\end{equation}
(with the value $f(\alpha)$ defined by viewing the rational function $f$ as a function $f:\mathbb P^1(\overline\F_q)\to\mathbb P^1(\overline\F_q)$; for the derivation of (\ref{eq:LuCf}),(\ref{eqn_LfpsiDef}) see \cite{BDFL16}*{\S 2}).
Each of the factors in \eqref{eqn_LfpsiDef} is a polynomial of degree $\delta=2\g(C_f)/(p-1)$ (in particular $\delta$ is always an integer). Thus, studying the zeros of $L(u,C_f)$ reduces to studying the zeros of $L(u,f,\psi)$ for each $\psi$.

For the rest of the section fix $\psi\in\Psi$ (the choice of $\psi$ is unimportant). The Riemann Hypothesis for curves over finite fields guarantees that the roots of $L(u,f,\psi)$ have absolute value $q^{-1/2}$.
We denote the roots (with multiplicity) by $\lambda_{i}(f,\psi)$, $i=1,\ldots,2\g(C_f)/(p-1)$, and we define $\rho_i(f,\psi)=q^{-1/2}\lambda_{i}^{-1}(f,\psi)$, which are called the normalized (inverse) zeros of $L(u,f,\psi)$. Here $\rho_i\in\C$ lie on the unit circle. We denote $\th_i(f,\psi)=\arg(\rho_i(f,\psi))/2\pi\in\R/\Z$.

We will consider three families of Artin-Schreier curves (and their corresponding $L$-functions), all paramet\-rized by the degree $d=\deg f\ge 1$ of the rational function $f$ defining the curve $C_f$ via $y^p-y=f(x)$.

\begin{enumerate}
\item The \emph{Ordinary} Artin-Schreier family:
\begin{multline*}\AS_d^\ord=\big\{f=h/g:h,g\in\F_q[t],g\mbox{ monic squarefree},(h,g)=1,\\
(\deg g=d,\deg h\le d)\mbox{ or } (\deg g=d-1,\deg h=d)\big\}.\end{multline*}
Each curve $C_f$ with $f\in\AS_d^\ord$ has genus $\g=(p-1)(d-1)$ and is ordinary in the sense of having $p$-rank $\g(C_f)$. Moreover, each isomorphism class of ordinary A-S covers $C_f\to\PP^1_{\F_q}$ (defined over $\F_q$) of genus $\g$ appears exactly $q/p$ times in this family (the reason for the multiplicity is that replacing $f$ by $f+b$ where $b\in\F_q,\tr_{q/p}=0$ gives an isomorphic cover).

For any monic squarefree $g\in\F_q[x]$ of degree $d$ or $d-1$ we define the subfamily with fixed denominator
$$\AS_{d,g}^\ord=\{f=h/g:h\in\F_q[x],(h,g)=1,f\in\AS_d^\ord\}.$$

\item The \emph{Polynomial} Artin-Schreier family, defined for $(d,p)=1$:
$$\AS_d^0 = \left\{f\in\F_q[x]:f=\sum_{i=0}^da_ix^i, a_d\neq 0, a_i=0\mbox{ if }i>0,p|i\right\}.$$
Each curve $C_f$ with $f\in\AS_d^0$ has genus $\g=(p-1)(d-1)/2$ and $p$-rank 0, and each isomorphism class of covers $C_f\to\PP^1_{\F_q}$ with genus $\g$ and $p$-rank 0 appears exactly $q/p$ times in this family (the reason for omitting exponents divisible by $p$ is to avoid further multiplicity).

\item The \emph{Odd polynomial} Artin-Schreier family, defined for $(d,2p)=1$:
$$\AS_d^{0,\odd} = \left\{f\in\AS_d^0:f(x)=-f(-x)\right\}.$$
The curves in this subfamily of the polynomial A-S family have the property that $L(u,f,\psi)\in\R[u]$ for each $f\in\AS_d^{0,\odd}$ (and the leading coefficient of $L(u,f,\psi)$ is $q^{(d-1)/2}$, which is positive) and therefore the $d-1$ roots $\rho_i(f,\psi)$ come in conjugate pairs. Once again each isomorphism class of A-S covers in this family appears exactly $q/p$ times.

\end{enumerate}

The distribution of $L$-zeros for the ordinary and polynomial A-S families will be modelled by the eigenvalues of random unitary matrices. For the odd polynomial family it will be modelled by random unitary symplectic matrices.

Next we define the 1-level and 2-level densities for the zeros of $L(u,f,\psi)$, which measure the local statistics of zeros near the critical point $u=q^{-1/2}$. Let $\Phi\in\mathcal{S}(\R)$ ($\mathcal{S}(\R)$ denotes the space of Schwartz functions on $\R$) be a test function. The 1-level density of $\Lfp$ with test function $\Phi$ is defined by
$$W_1(f,\psi,\Phi)=\sum_{i=1}^{2\g/(p-1)}\phi_{2\g/(p-1)}(\th_i(f,\psi)),$$
where \begin{equation}\label{eq:periodic}\phi_{2\g/(p-1)}(t)=\sum_{n\in\Z}\Phi\left(\frac{2\g}{p-1}(t+n)\right),\g=\g(C_f),\end{equation} is the periodic sampling function (with period 1) associated with $\Phi$ at the scale $(p-1)/2\g$ (which is the natural local scale since there are $2\g/(p-1)$ zeros).
Similarly for a bivariate test function $\Phi\in\mathcal{S}(\R^2)$ we define
$$W_2(f,\psi,\Phi)=\sum_{i,j=1\atop{i\neq j}}^{2\g/(p-1)}\phi_{2\g/(p-1)}(\th_i(f,\psi),\th_j(f,\psi)),$$
where $\phi_{2\g/(p-1)}(t,s)=\sum_{m,n\in\Z}\Phi\left(\frac{2\g}{p-1} (t+m), \frac{2\g}{p-1} (s+n)\right)$.

Similarly, one defines the 1-level and 2-level densities for the eigenvalues of a unitary matrix: if $U\in\mathrm{U}_N$ is an $N\times N$ unitary matrix with eigenvalues $e^{2\pi i\theta_j(U)},j=1,\ldots,N$ and $\Phi\in\mathcal{S}(\R)$ (resp. $\Phi\in\mathcal{S}(\R^2)$) we define
$$W_1(U,\Phi)=\sum_{i=1}^N\phi_{N}(\th_i(U)),$$
$$W_2(U,\Phi)=\sum_{i,j=1\atop{i\neq j}}^N\phi_{N}(\th_i(U),\th_j(U)),$$
(here $\phi_N$ is defined as above but with scaling factor $N$).
For a random variable $X$ on a probability space $\Omega$ we denote by $\mean{X(a)}_{a\in\Omega}$ its mean (expected value). If $\Omega$ is finite we will always equip it with the uniform probability measure. The asymptotics of the mean of 1-level and 2-level density (more generally of $n$-level density) for a random matrix drawn from a classical compact group as $N\to\infty$ are known \cite{KaSa99}*{Theorem AD.2.2}. In particular one has
\begin{equation}\label{1levelU}\lim_{N\to\infty}\mean{W_1(U,\Phi)}_{U\in\mathrm{U}(N)}=\int_{-\infty}^\infty\Phi(t)\mathrm{d}t,\end{equation} 
\begin{equation}\label{2levelU}\lim_{N\to\infty}\mean{W_2(U,\Phi)}_{U\in\mathrm{U}(N)}=\iint_{\R^2}\Phi(t,s)\left(1-\left(\frac{\sin(\pi(t-s))}{\pi(t-s)}\right)^2\right)\mathrm{d}t\mathrm{d}s,\end{equation}
\begin{equation}\label{1levelUSp}\lim_{N\to\infty}\mean{W_1(U,\Phi)}_{U\in\mathrm{USp}(2N)}=\int_{-\infty}^\infty\Phi(t)\left(1-\frac{\sin 2\pi t}{2\pi t}\right)\mathrm{d}t,\end{equation}
where the means are taken with respect to the Haar probability measure. The main results of the present paper establish analogous limits for the zeros of $L$-functions chosen randomly (with uniform probability) from the above three Artin-Schreier families (for a restricted class of test functions), thus providing new evidence for the random matrix models for these families.

Throughout the paper we use the following definition of the Fourier transform:
$$\hat\Phi(\tau_1,\ldots,\tau_n)=\int_{\R^n}\Phi(t_1,\ldots,t_n)e^{-2\pi i\sum_{j=1}^n\tau_jt_j}\mathrm{d}t_1\cdots\mathrm{d}t_n.$$

\begin{theorem}[2-level density of polynomial family]\label{thm:main1}
Assume $p>2$. Let $\Phi\in\mathcal{S}(\reals^{2})$ be a fixed test function with with Fourier transform $\hat\Phi(\tau,\sigma)$ supported on the region $|\tau|+|\sigma|<2-2/p$. Then (compare with (\ref{2levelU}))
$$\lim_{d\to\infty}\mean {W_2(f,\psi,\Phi)}_{f\in\AS_d^0}=\iint_{\R^2}\Phi(t,s)\left(1-\left(\frac{\sin(\pi(t-s))}{\pi(t-s)}\right)^2\right)\mathrm{d}t\mathrm{d}s,$$
uniformly in $q$.  
\end{theorem}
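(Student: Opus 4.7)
The plan is to extend the Fourier-analytic strategy used by \cite{Ent12} for the 1-level density to handle 2-point correlations. Writing $N = d-1 = 2\g/(p-1)$ for the number of zeros and applying Poisson summation to the periodic sampling function in the definition of $W_2$, one obtains
\begin{equation*}
W_2(f,\psi,\Phi) = \frac{1}{N^2}\sum_{k,\ell\in\Z}\hat\Phi(k/N,\ell/N)\bigl[P_k(f,\psi)P_\ell(f,\psi) - P_{k+\ell}(f,\psi)\bigr],
\end{equation*}
where $P_k(f,\psi) = \sum_{i=1}^{N}\rho_i(f,\psi)^k$ and $P_{-k} = \overline{P_k}$. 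Comparing coefficients in \eqref{eqn_LfpsiDef} yields, for $k > 0$,
\begin{equation*}
P_k(f,\psi) = -q^{-k/2}\,T_k(f,\psi), \qquad T_k(f,\psi) := \sum_{x\in\F_{q^k}}\psi(\tr_{q^k/p}(f(x))),
\end{equation*}
and the support hypothesis on $\hat\Phi$ restricts the double sum to $|k|+|\ell| < (2-2/p)N$.

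The central step is computing $\mean{P_k P_\ell}_{f\in\asfam}$. Writing $f = \sum_{i\in J} a_i x^i$ with $J = \{0\}\cup\{1\le i\le d : p\nmid i\}$ and $a_d\ne 0$, opening $T_k T_\ell$ as a sum over pairs $(x,y)\in\F_{q^{|k|}}\times\F_{q^{|\ell|}}$, swapping with the family average, and invoking orthogonality of the additive characters $\psi\circ\tr_{\F_q/\F_p}$, I would reduce the inner average to the indicator of the linear system over $\F_q$
\begin{equation*}
\mathrm{sgn}(k)\,\tr_{q^{|k|}/q}(x^i) + \mathrm{sgn}(\ell)\,\tr_{q^{|\ell|}/q}(y^i) = 0 \qquad (i\in J);
\end{equation*}
the constraint $a_d\ne 0$ produces only an $O(1/q)$ correction via inclusion--exclusion. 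In the resonant case $\ell = -k$ ($k>0$), the conditions become $\tr_{q^k/q}(x^i - y^i) = 0$ for $i\in J$, and since $\tr(z^p) = \tr(z)^p$ in characteristic $p$ these propagate automatically to all $0\le i\le d$. Provided $k < d$ (guaranteed by the support bound), Newton's identities then force $y$ to lie in the Frobenius orbit of $x$, contributing $\sim k q^k$ solutions and hence $\mean{|P_k|^2}\sim \min(k,N)$, matching the Diaconis--Shahshahani moments for $\mathrm U(N)$. Fourier inversion on these resonant contributions reproduces exactly the $1 - (\sin\pi(t-s)/\pi(t-s))^2$ kernel in \eqref{2levelU}.

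For $k+\ell\ne 0$ one must show that the number of $(x,y)$ satisfying the trace system above is $\ll q^{|k|+|\ell| - |J| + o(N)}$, making the corresponding contribution to $\mean{P_k P_\ell}$ negligible. Since $|J| = (1-1/p)d + O(1)$, the support bound $|k|+|\ell| < (2-2/p)N$ is precisely what is needed for this saving to survive, explaining the appearance of $2-2/p$. The diagonal term $P_{k+\ell}$ is handled by the 1-level machinery of \cite{Ent12} at level $|k+\ell|$. The principal technical obstacle is establishing the above point-count bound \emph{uniformly} in $(k,\ell)$ across the support region: this is a dimension estimate on the Weil restriction to $\F_q$ of the variety cut out by the $|J|$ trace conditions, and pulling out the sharp constant $2-2/p$ (rather than a weaker threshold such as $1-1/p$) requires carefully exploiting the fact that the $p$-th power identity upgrades the $|J|$ conditions to roughly $d+1$ conditions, yielding an almost complete Newton-identity rigidity on compatible $(x,y)$ pairs.
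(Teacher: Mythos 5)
Your overall architecture --- Fourier-expanding $W_2$ into power sums of the zeros, reducing to the mixed moments $\mean{P_kP_\ell}$, and splitting into the resonant case $\ell=-k$ and the rest --- is the same as the paper's, and your exponential-sum expression for $P_k$ is equivalent (after grouping the points of $\F_{q^{|k|}}$ by minimal polynomial) to the von Mangoldt/Dirichlet-character formulation \eqref{eqn_expressingTracesWithCharacters} used in the paper. The gap is in your central claim for the resonant case. The conditions $\tr_{q^k/q}(x^i)=\tr_{q^k/q}(y^i)$ for all $i\le d$ say that the logarithmic derivatives of the reversed characteristic polynomials $\tilde c_x,\tilde c_y$ of the two Frobenius orbits agree modulo $u^{d}$; in characteristic $p$ this determines $\tilde c_y$ only up to multiplication by a polynomial in $u^p$ (Newton's identities require division by $i$ and break down for $i\ge p$), so Frobenius conjugacy is not forced. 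When $2k<d$ a degree count rescues the conclusion: the Wronskian $\tilde c_x'\tilde c_y-\tilde c_x\tilde c_y'$ has degree $<d$ and hence vanishes identically, and a nontrivial $p$-th power factor is impossible for irreducibles. That is essentially why the older range $|k|+|\ell|<d$ of \cite{Ent12} is accessible. But in the new range $d/2<k<(1-1/p)d$ there genuinely are non-conjugate pairs $(x,y)$ satisfying every trace condition: for each $\varphi$ with $\deg\varphi\le d/p$ the congruence $\tilde c_y\equiv\tilde c_x\varphi(u^p)\pmod{u^{d+1}}$ has candidate solutions, and a trivial count gives $\sim q^{k+d/p}$ candidate pairs, swamping the main term $kq^k$. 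Bounding these is the crux of the theorem; the paper does it by viewing $(\tilde c_x,\tilde c_y)$ as a primitive short vector in the lattice $\Gamma_\varphi$ of \eqref{eq:gammaphi} and invoking Lenstra's reduced-basis theory (Lemma \ref{primitiveVectorsBoundLemma}), which gives $O(q^{2k-d+2})$ pairs per $\varphi$ and a total of $O(q^{2k-(1-1/p)d+3})$ --- negligible precisely for $k<(1-1/p)d$. Your proposal contains no substitute for this step.

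The same omission affects the non-resonant terms $k+\ell\ne 0$: you correctly identify that a point-count bound of strength $\ll q^{|k|+|\ell|-(1-1/p)d+o(d)}$ is needed and even flag it as the principal obstacle, but you only gesture at a dimension estimate for a Weil restriction without supplying an argument; the paper again obtains the bound from the lattice estimates (Lemmas \ref{primitiveVectorsBoundLemma} and \ref{primitiveVectorsBoundLemma2}), treating coprime and non-coprime prime-power pairs separately. Until the spurious resonant pairs are counted and the non-resonant bound is actually established, the argument does not get past the range $|k|+|\ell|<d$ already known from \cite{Ent12}.
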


\begin{theorem}[1-level density of odd polynomial family]\label{thm:main2} Assume $p>2$. Let $\Phi\in\mathcal{S}(\reals)$ be a fixed test function with with Fourier transform $\hat\Phi(\tau)$ supported on the interval $|\tau|<1-1/p$. Then (compare with (\ref{1levelUSp}))
$$\lim_{d\to\infty}\mean {W_1(f,\psi,\Phi)}_{f\in\AS_d^{0,\odd}}=\int_{-\infty}^\infty\Phi(t)\left(1-\frac{\sin 2\pi t}{2\pi t}\right)\mathrm{d}t,$$
uniformly in $q$.
\end{theorem}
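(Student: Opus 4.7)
The strategy is to combine an explicit formula for the $1$-level density with orthogonality averaging over the coefficients of $f\in\oasfam$, identifying the main term as a Riemann sum converging to the symplectic correction. Writing $\Lfp=\prod_{i=1}^{d-1}(1-q^{1/2}\rho_i(f,\psi)u)$ and comparing the logarithmic derivative with (\ref{eqn_LfpsiDef}) gives $\sum_i\rho_i(f,\psi)^n=-q^{-n/2}S_n(f,\psi)$ for $n\ge 1$, where $S_n(f,\psi):=\sum_{\alpha\in\F_{q^n}}\psi(\tr_{q^n/p}f(\alpha))$; the reality of $\Lfp$ (forced by $f(-x)=-f(x)$) extends this to negative $n$ with $|n|$ in place of $n$. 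Applying Poisson summation to the kernel in \eqref{eq:periodic} yields
\begin{equation*}
\Wonefp=\hat\Phi(0)-\frac{1}{d-1}\sum_{n\ge 1}\bigl[\hat\Phi\bigl(\tfrac{n}{d-1}\bigr)+\hat\Phi\bigl(\tfrac{-n}{d-1}\bigr)\bigr]q^{-n/2}S_n(f,\psi),
\end{equation*}
and the support hypothesis truncates the inner sum to $1\le n<(d-1)(1-1/p)$.

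Next, parametrize $f\in\oasfam$ by $(a_j)_{j\in J}$ with $J=\{1\le j\le d:j\text{ odd},\ p\nmid j\}$, where $a_d\in\F_q^*$ and $a_j\in\F_q$ is independent and uniform for $j\neq d$. Using $\psi(\tr_{q^r/p}f(\alpha))=\prod_{j\in J}\psi(\tr_{q/p}(a_j\,\tr_{q^r/q}\alpha^j))$ and applying orthogonality of $\psi$ coordinate-wise reduces the average to a counting problem:
\begin{equation*}
\mean{S_r(f,\psi)}_{f\in\oasfam}=\tfrac{q}{q-1}N_r(J)-\tfrac{1}{q-1}N_r(J\setminus\{d\}),\quad N_r(K):=\#\{\alpha\in\F_{q^r}:\tr_{q^r/q}(\alpha^k)=0\ \forall k\in K\}.
\end{equation*}

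The symplectic main term comes from the ``involution set'' $\mathcal I_r:=\{\alpha\in\F_{q^{2s}}:\alpha^{q^s}=-\alpha\}$ when $r=2s$ is even, which has cardinality $q^s=q^{r/2}$ (the $q^s-1$ nonzero elements being the fiber of $x\mapsto x^{q^s-1}$ over $-1\in\F_{q^{2s}}^*$) and sits inside $N_r(K)$ because $\tr_{q^r/q^s}(\alpha^k)=\alpha^k(1+(-1)^k)=0$ for every odd $k$. For $r$ odd inside the support range, the analogous set collapses to $\{0\}$ since an ``anti-Frobenius'' relation $\sigma^m(\alpha)=-\alpha$ forces the degree of $\alpha$ over $\F_q$ to be even. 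Granted the matching bound $N_r(K)=q^{r/2}\mathbf{1}_{r\text{ even}}+o(q^{r/2})$ uniformly in $q$, substitution into the explicit formula produces a Riemann sum of step $2/(d-1)$ converging to $\tfrac12\int_{-1}^{1}\hat\Phi(\tau)d\tau$. By the Fourier pair $\widehat{\sin(2\pi t)/(2\pi t)}=\tfrac12\mathbf{1}_{[-1,1]}$ and the hypothesis $\mathrm{supp}\,\hat\Phi\subset(-(1-1/p),1-1/p)\subset(-1,1)$, this integral equals $\int_{\R}\Phi(t)\tfrac{\sin 2\pi t}{2\pi t}dt$, and combining with the $\hat\Phi(0)$ term yields the claimed symplectic density.

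The main obstacle will be justifying the matching upper bound $N_r(K)\le q^{r/2}+o(q^{r/2})$ uniformly in $q$ and in $r<(d-1)(1-1/p)$. The natural approach is Newton's identities applied to the power sums $p_k(\alpha)=\sum_\sigma\sigma(\alpha)^k$ over the Galois orbit of $\alpha$: the trace condition $\tr_{q^r/q}(\alpha^k)=0$ translates to $p_k(\alpha)=0$ whenever $p\nmid r/d_\alpha$, and is automatic in the subfield case $\alpha\in\F_{q^{r/p}}$ (contributing at most $q^{r/p}=o(q^{r/2})$ since $p\ge 3$). Newton's relation $p_k=\sum_{j=1}^{k-1}(-1)^{j-1}e_jp_{k-j}+(-1)^{k-1}ke_k$ then forces the odd elementary symmetric functions $e_k$ of $\alpha$'s conjugates to vanish whenever $p\nmid k$; the threshold $1-1/p$ in the support hypothesis is precisely what keeps enough ``good'' trace conditions (indices $k\in J$, hence $p\nmid k$) available below $d$ to pin each non-subfield $\alpha$ into $\mathcal I_r$, up to a contribution from the exceptional exponents $k\in p\Z$ where Newton's identity degenerates. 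Bounding this exceptional locus uniformly in $q$ (so that the aggregate error survives summation against the bounded weights $\hat\Phi(\pm n/(d-1))$ and remains $o(1)$) is the technical crux.
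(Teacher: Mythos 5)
Your overall architecture is sound and, after unwinding the duality, is essentially the paper's: the explicit formula plus orthogonality over the coefficients $a_j$ converts the problem into counting elements $\alpha\in\F_{q^r}$ (equivalently, prime powers $c\in\F_q[x]$ of degree $r$ weighted by $\Lambda(c)$) satisfying the trace conditions; your ``involution set'' $\mathcal I_r=\{\alpha:\alpha^{q^{r/2}}=-\alpha\}$ is exactly the set of $\alpha$ whose minimal polynomial lies in $\F_q[x^2]$, which is the paper's diagonal term $\psi(r;A)$ (Proposition \ref{prop_ChebotarevApplication}); and your Fourier endgame matches section \ref{sec:odd_density}. The problem is that the one estimate everything hinges on --- the matching upper bound $N_r(K)\le q^{r/2}+o(q^{r/2})$ uniformly in $q$ for $r$ up to $(1-1/p)d$ --- is precisely the off-diagonal bound (Proposition \ref{prop_nonChebotarevBound}), and you leave it unproven: you write ``granted the matching bound'' and then describe its proof as ``the technical crux.'' That is not a minor loose end; it is the entire technical content of the theorem and the stated novelty of the paper.

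Moreover, the route you sketch for it would not succeed as described. Newton's identities translate the conditions $\tr_{q^r/q}(\alpha^k)=0$ for $k\in J$ into the vanishing of roughly $\frac r2\left(1-\frac 1p\right)$ coefficients of the minimal polynomial (the odd-index, $p\nmid k$ ones), and a dimension count of that vanishing locus gives at best $q^{r-\frac r2(1-1/p)}=q^{\frac r2+\frac r{2p}}$ candidate polynomials of degree $r$ --- which \emph{exceeds} the main term $q^{r/2}$, so the ``exceptional locus'' you defer cannot be disposed of by counting free coefficients. What actually saves the day in the paper is the multiplicative structure of the dual condition: a prime power $c$ in the off-diagonal set satisfies $c\equiv g_1(x^2)g_2(x^p)\pmod{x^{d+1}}$, and writing $c=u_1(x^2)+x^pu_2(x^2)$ forces $(u_1,u_2)$ to be a \emph{primitive} vector of controlled degree in one of $O(q^{d/2p})$ lattices $\Gamma_\varphi$ of volume $q^{(d-p)/2}$; Lenstra's reduced-basis theory (Theorem \ref{thm:lenstra_basis_reduction} and Lemma \ref{primitiveVectorsBoundLemma}) then bounds the number of such vectors by $O(q^{r-(1-1/p)d/2+O(1)})$, which is $o(q^{r/2})$ exactly in the range $r<(1-1/p)d$, with a separate argument in the short range $r<d/2-p-1$ forcing $c$ to be a $p$-th power. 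None of this is recoverable from the coefficient-vanishing picture alone, so as it stands the proposal has a genuine gap at its central step.
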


\begin{theorem}[1-level density of ordinary family with fixed denominator]\label{thm:main3} Assume $p>2$. Let $\Phi\in\mathcal{S}(\reals)$ be a fixed test function with with Fourier transform $\hat\Phi(\tau)$ supported on the interval $|\tau|<1$. Then  we have (compare with (\ref{1levelU}))
$$\lim_{d\to\infty}\mean {W_1(f,\psi,\Phi)}_{f\in\AS_{d,g}^\ord}=\int_{-\infty}^\infty\Phi(t)\mathrm{d}t$$
uniformly over $q$ and monic squarefree $g\in\F_q[x]$ with $\deg g\in\{d,d-1\}$.
\end{theorem}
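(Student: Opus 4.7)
My plan begins by applying Poisson summation to derive an explicit formula. With $N=2\g/(p-1)=2(d-1)$, the periodic sampling function has Fourier expansion $\phi_N(\theta)=N^{-1}\sum_{k\in\Z}\hat\Phi(k/N)e^{2\pi ik\theta}$, so combined with the Newton-sum identity $\sum_i\rho_i(f,\psi)^r=q^{-r/2}S_r(f,\psi)$ (where $S_r(f,\psi)=-\sum_{\alpha\in\F_{q^r}\cup\{\infty\},f(\alpha)\neq\infty}\psi(\tr_{q^r/p}f(\alpha))$), we obtain
\[
W_1(f,\psi,\Phi)=\hat\Phi(0)+\frac{1}{N}\sum_{r\geq 1}\hat\Phi(r/N)q^{-r/2}S_r(f,\psi)+\frac{1}{N}\sum_{r\geq 1}\hat\Phi(-r/N)q^{-r/2}\overline{S_r(f,\psi)}.
\]
The support hypothesis restricts the nonzero contributions to $|r|\leq N-1$, and $\hat\Phi(0)=\int_\R\Phi$. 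The theorem thus reduces to showing $N^{-1}\sum_{1\leq r\leq N-1}\hat\Phi(r/N)q^{-r/2}\mean{S_r(f,\psi)}_{f\in\ASdg}\to 0$ (and likewise for the conjugate sum) uniformly in $q$ and $g$ as $d\to\infty$.

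To exploit the structure of $\ASdg$, I parametrize: for $\deg g=d$, each $f\in\ASdg$ admits a unique decomposition $f=c+h_0/g$ with $c\in\F_q$ and $h_0\in\F_q[x]$ of degree $<d$ coprime to $g$. A direct comparison of $L$-functions (using $(c+h_0/g)(\infty)=c$ and $(h_0/g)(\infty)=0$) yields the clean identity $S_r(c+h_0/g,\psi)=\psi(r\tr_{q/p}c)\cdot S_r(h_0/g,\psi)$. Averaging over $c\in\F_q$ then gives
\[
\mean{S_r(f,\psi)}_{f\in\ASdg}=\mathbf{1}[p\mid r]\cdot\mean{S_r(h_0/g,\psi)}_{h_0\in(\F_q[x]/g)^\times},
\]
so only $r$ divisible by $p$ contribute. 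The case $\deg g=d-1$ is handled analogously by writing $h=(ax+b)g+h_0$ with $a\in\F_q^\times,b\in\F_q$; averaging over $b$ again produces $\mathbf{1}[p\mid r]$, and the problem reduces to estimating $\mean{S_r(ax+h_0/g,\psi)}_{a,h_0}$.

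For $p\mid r$ I would bound the conditional average by interchanging the order of summation:
\[
\mean{S_r(h_0/g,\psi)}_{h_0}=-\sum_{\alpha\in\F_{q^r}\cup\{\infty\},\,g(\alpha)\neq 0}\mean{\psi(\tr_{q^r/p}(h_0(\alpha)/g(\alpha)))}_{h_0}.
\]
For $\alpha\in\F_{q^{r/p}}$ with $g(\alpha)\neq 0$, the trace $\tr_{q^r/p}$ vanishes identically on $\F_{q^{r/p}}$ (because $p\mid r/(r/p)$), contributing the main term $-q^{r/p}+O(d)$. For $\alpha\in\F_{q^{r'}}\setminus\F_{q^{r/p}}$ with $r'\mid r$ and $p\nmid r/r'$, the additive character $\chi(v):=\psi(\tr_{q^r/p}(v/g(\alpha)))$ is nontrivial on $\F_{q^{r'}}$. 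I would then use the CRT isomorphism $(\F_q[x]/g)^\times\cong\prod_i\F_{q^{d_i}}^\times$ (with $g=\prod_i g_i$ the factorization into distinct irreducibles) together with Lagrange interpolation to write $h_0(\alpha)/g(\alpha)=\sum_i L_i(y_i)$ as an $\F_q$-linear combination of the local coordinates $y_i=h_0(\beta_i)\in\F_{q^{d_i}}^\times$, thereby factoring the character sum into local Gauss-type sums. Nontriviality of $\chi$ forces at least one local factor to equal $-1$ rather than $q^{d_i}-1$, producing the desired cancellation and yielding $|\mean{S_r(h_0/g,\psi)}_{h_0}|\ll q^{r/p}$.

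Assembling these bounds,
\[
\frac{1}{N}\sum_{r=1}^{N-1}|\hat\Phi(r/N)|q^{-r/2}|\mean{S_r}|\ll\frac{\|\hat\Phi\|_\infty}{N}\sum_{k\geq 1}q^{k(1-p/2)}=O(N^{-1}),
\]
which vanishes uniformly in $q\geq p\geq 3$ since the geometric ratio $q^{1-p/2}\leq q^{-1/2}\leq 3^{-1/2}$ is bounded away from $1$. The principal obstacle is making the character sum estimate of the third paragraph rigorous when $\alpha$ has ``large'' degree $r'>d$ (which arises once $r$ exceeds $d$, a regime still within the support $r<N$): there the evaluation map $h_0\mapsto h_0(\alpha)$ fails to be surjective onto $\F_{q^{r'}}$ and $\chi$ may accidentally vanish on its image, so a refinement---via M\"obius inclusion-exclusion over divisors of $g$ combined with a dimensional count on the linear conditions defining the ``degenerate'' locus of $\alpha$---is required to show that the contribution of these exceptional $\alpha$ remains $O(q^{r/p})$. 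The uniformity in $q$ and $g$ then follows because all estimates depend only on $p,d,$ and $\|\hat\Phi\|_\infty$.
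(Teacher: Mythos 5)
Your reduction to bounding $\mean{S_r(f,\psi)}_{f\in\ASdg}$, the decomposition of the family into constant shifts (and shifts by $ax+b$ when $\deg g=d-1$), and the resulting factor $\mathds 1_{p\mid r}$ all match the paper's argument and are correct, as is the final Fourier assembly. The gap is in your third paragraph, and it is not the technical refinement you describe at the end: it is the entire substance of the theorem. Fix $\alpha$ of degree $r'$ over $\F_q$ with minimal polynomial $m$, $(m,g)=1$ and $p\nmid r/r'$. After M\"obius inversion over divisors $e\mid g$ to remove the coprimality condition, each inner sum runs over the $\F_q$-vector space $\{h_1:\deg h_1<d-\deg e\}$, so it equals either $0$ or exactly $q^{d-\deg e}$ --- there is no intermediate ``Gauss sum'' saving, and the whole question is how often the degenerate case occurs. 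The character is trivial on all of $\{h_0:\deg h_0<d\}$ precisely when $\tr_{q^{r'}/p}(h_0(\alpha)/g(\alpha))=0$ for every such $h_0$ (the factor $r/r'$ relating this to $\tr_{q^r/p}$ is prime to $p$), which by (\ref{eq:chihgc}) says $\chi_{h_0/g}(m)=1$ for all $h_0/g\in\hg$, i.e.\ $m\bmod g^2\in(\F_q[x]/g^2)^{\times p}$. Each such $m$ has $r'$ roots, each contributing $\approx\varphi(g)$ to the $h_0$-sum, hence $\approx r'$ to $\mean{S_r}$; so your route lands exactly on the quantity $\psi\left(r;(\F_q[x]/g^2)^{\times p}\right)=\sum_{\deg c=r,\ c\bmod g^2\in(\F_q[x]/g^2)^{\times p}}\Lambda(c)$ that the paper must bound. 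This degenerate locus is cut out by Artin--Schreier trace conditions, not by conditions linear in $\alpha$, and counting its $\F_{q^r}$-points is the original problem in disguise (its size is governed by the zeros of the very $L$-functions under study), so a ``dimensional count on linear conditions'' gives nothing. Note also that the intermediate bound $|\mean{S_r}|\ll q^{r/p}$ you aim for is too strong: the trivial character in the orthogonality relation forces a term of size about $q^{r-d}$ in $\psi(r;(\F_q[x]/g^2)^{\times p})$, which exceeds $q^{r/p}$ once $r>\frac{p}{p-1}d$. (This weaker bound would still suffice, since $q^{-r/2}\cdot rq^{r-d}\le rq^{-\delta d}$ on the support of $\hat\Phi$.)

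The missing idea, which the paper supplies, is the derivative trick: if $c\equiv u^p\pmod{gQ}$ with $Q\mid g$ squarefree, then writing $c-u^p=UgQ$ and differentiating gives $Q\mid c'$. The set of admissible derivatives is a module over $\F_q[x^p]$, and the number of its short vectors is controlled via Lenstra's reduced bases (Lemmas \ref{claim_bounding_number_of_derivatives}--\ref{lem_psiHgBound}), which is what makes the count of degenerate $m$ small enough. Without this step, or an equally nontrivial substitute, the off-diagonal estimate --- and hence the theorem --- is not established.
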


By averaging over all suitable $g$ we obtain the following

\begin{corollary}[1-level density of full ordinary family]\label{cor:main3} Under the conditions of Theorem \ref{thm:main3} we have
$$\lim_{d\to\infty}\mean {W_1(f,\psi,\Phi)}_{f\in\AS_{d}^\ord}=\int_{-\infty}^\infty\Phi(t)\mathrm{d}t,$$ uniformly in $q$.\end{corollary}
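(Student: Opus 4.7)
The plan is to derive the corollary from Theorem~\ref{thm:main3} via a straightforward convex-combination argument that exploits the uniformity in both $g$ and $q$. By the definition of $\AS_d^\ord$, every $f\in\AS_d^\ord$ admits a unique representation $f=h/g$ with $g\in\F_q[x]$ monic squarefree of degree $d$ or $d-1$, $(h,g)=1$, and $h$ satisfying the stated degree conditions. This yields the disjoint decomposition
$$\AS_d^\ord=\bigsqcup_g \AS_{d,g}^\ord,$$
where $g$ ranges over all such admissible denominators, and consequently
$$\mean{W_1(f,\psi,\Phi)}_{f\in\AS_d^\ord}=\sum_g \frac{|\AS_{d,g}^\ord|}{|\AS_d^\ord|}\,\mean{W_1(f,\psi,\Phi)}_{f\in\AS_{d,g}^\ord},$$
a convex combination with nonnegative weights summing to $1$.

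Given $\epsilon>0$, the uniformity in $q$ and $g$ in Theorem~\ref{thm:main3} supplies a threshold $d_0=d_0(\Phi,\epsilon)$ such that for all $d\ge d_0$, all prime powers $q$ with $p>2$, and all admissible denominators $g$,
$$\left|\mean{W_1(f,\psi,\Phi)}_{f\in\AS_{d,g}^\ord}-\int_{-\infty}^\infty \Phi(t)\,\d t\right|<\epsilon.$$
Substituting into the convex combination and applying the triangle inequality yields the same bound with $\AS_{d,g}^\ord$ replaced by $\AS_d^\ord$, uniformly in $q$, which is the assertion of the corollary.

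There is essentially no obstacle in this deduction: the entire substantive content has been packaged into Theorem~\ref{thm:main3}, and the uniformity in $g$ stated there is precisely what permits the passage from the fixed-denominator subfamilies to the full ordinary family. The only point worth verifying is the uniqueness of the representation $f=h/g$ underlying the disjoint-union decomposition, which follows at once from requiring $g$ to be monic and coprime to $h$.
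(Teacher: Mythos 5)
Your proposal is correct and is exactly the argument the paper intends: the paper derives the corollary from Theorem \ref{thm:main3} simply ``by averaging over all suitable $g$,'' which is your convex-combination over the disjoint decomposition $\AS_d^\ord=\bigsqcup_g\AS_{d,g}^\ord$, using the stated uniformity in $g$ and $q$. Your observation that disjointness follows from the uniqueness of the reduced representation with monic denominator is the right (and only) point needing verification.
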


\begin{rem} In \cite{Ent12} the first author established Theorem \ref{thm:main1} with the smaller range $|\tau|+|\sigma|<1$ (see \cite{Ent12}*{Theorem 3}\footnote{In \cite{Ent12}*{Theorem 3} the integral on the right hand side is slightly miswritten, the correct kernel (which is what the original proof produces) is the one appearing in Theorem \ref{thm:main1} of the present paper (up to rescaling by $2\pi$).}) as well as the 1-level density for the polynomial A-S family in the range $|\tau|<2-2/p$ (see \cite{Ent12}*{Theorem 2}) 
Theorems \ref{thm:main2} and \ref{thm:main3} are new for any nonzero test function.\end{rem}

\begin{rem} We conjecture that in all three cases the results hold for any Schwartz test function. However in all cases where results of this form have been proved for families $\mathcal{F}_d$ of $L$-functions in the fixed $q$ regime, the Fourier transform of the test function was required to be supported in the range up to $2\lim_{d\to\infty}\log_q|\mathcal{F}_d|/c(\mathcal{F}_d)$ or smaller, where $c(\mathcal{F}_d)$ is the analytic conductor of the family (defined to be the number of zeros of an $L$-function in the family). This can be considered the limit of current technology (a similar observation applies over number fields). Theorems \ref{thm:main1},\ref{thm:main2} and \ref{thm:main3} reach that barrier. In the case of Corollary \ref{cor:main3} we have not been able to exploit the averaging over $g$ to increase the range. See \cite{DPR20_} for a striking recent result that does make use of external averaging to slightly increase the range in a similar situation. \end{rem}

\begin{rem} Here we only consider 1- and 2-level densities of \emph{low-lying} zeros, i.e. zeros near $\theta=0$. We could easily consider the local statistics of zeros around any given value of $\theta\in\R/\Z$ defined by shifting the arguments in $\phi_{2\g/(p-1)}$ above. A slight modification of our calculations would give the corresponding results. By averaging over $\theta$ we can then show that the pair correlation of zeros for the polynomial A-S family is the same as for random unitary matrices as long as the test function is supported in $(-(1-1/p),1-1/p)$.\end{rem}

In proving all of our results we take the standard approach of applying the explicit formula to reduce the problem to the estimation of certain sums over primes, which are then separated into diagonal and off-diagonal terms. One of the technical novelties in the present work, which is instrumental in all of our results, is using bounds on the number of short vectors in $\F_q[x]$-lattices to bound the off-diagonal terms. Here Lenstra's theory of reduced bases for such lattices \cite{Len85} plays a pivotal role. This approach exploits unique features of the A-S families which are not present in other well-studied families of $L$-functions.

The paper is organized as follows: in section \ref{sec:dirichlet} we relate the above families of A-S $L$-functions with certain natural families of Dirichlet $L$-functions. Most of our calculations proceed in the Dirichlet $L$-function setting which turns out to be convenient for our purposes. In section \ref{sec:lattice} we recall the basic theory of reduced bases for $\F_q[x]$-lattices developed in \cite{Len85} and derive several bounds on short vectors in such lattices, which will be used in our estimates of the off-diagonal terms. With these preliminaries in place, we prove Theorems \ref{thm:main1},\ref{thm:main2},\ref{thm:main3} in sections \ref{sec_full_as_family},\ref{sec_odd_poly},\ref{sec:ord_as_fam} respectively. The latter three sections are mostly independent of each other and can be read in any order.
\\ \\
{\bf Acknowledgments.} The authors would like to thank Ze\'ev Rudnick for providing some useful references on counting short vectors in lattices. Both authors were partially supported by a grant of the Israel Science Foundation no.~2507/19. 

\section{Reformulation in terms of Dirichlet $L$-functions}\label{alexei_construction_section}
\label{sec:dirichlet}

Throughout the rest of the paper $p>2$ is a prime, $q$ is a power of $p$ and we fix a non-trivial additive character $\psi:\F_p\to\C^\times$. In the present section we explain how to associate a primitive Dirichlet character $\chi_f$ with each A-S $L$-function $L(u,f,\psi)$ in each of the families we consider, so that $L(u,\chi_f)=(1-u)L(u,f,\psi)$ (the left hand side is a Dirichlet $L$-function). That a suitable (generalized) Dirichlet character exists for any A-S $L$-function follows from class field theory, but we will need to compute these characters explicitly and will only do so for polynomial and ordinary A-S $L$-functions.

\subsection{Dirichlet characters and L-functions} \label{subsec:Dir}
We recall some basic facts about Dirichlet characters over $\F_q[x]$ and their associated $L$-functions. See \cite{Ros02}*{Chapter 4} for a thorough introduction. 

Denote by $\mathcal{M}$ the set of monic polynomials in $\field_q[x]$. Let $Q(x)\in\mathcal{M}$ be a monic polynomial of degree $m\ge 1$, and let $\chi$ be a character of the multiplicative group $(\field_q[x]/Q)^{\times}$. We may extend $\chi$ to a strongly multiplicative function $\chi:\F_q[x]\to\C$ defined by

\[
    \chi(g)= 
\begin{cases}
    0,& (g,Q)\ne 1,\\
    \chi(g\text{ mod }Q),              & \mbox{otherwise}.
\end{cases}
\]
The extended function is called a (classical) Dirichlet character. A Dirichlet character $\chi$ modulo $Q$ is called \emph{primitive} if there is no proper divisor $Q_1|Q$ such that for $g\in\F_q[x]$ coprime with $Q$ the value $\chi(g)$ depends only on $g\text{ mod }Q_1$. It is called \emph{trivial} if it takes $1$ on every polynomial coprime with $Q$. It is called \emph{even} if it takes the value $1$ on the constants $\field_q^{\times}$, and \emph{odd} otherwise. 
We denote
$$e(\chi)=\left\{\begin{array}{ll} 1,&\chi\mbox{ is even},\\ 0,&\chi\mbox{ is odd}.\end{array}\right.$$

If $Q\in\mathcal M$ is a modulus we denote by $(\F_q[x]/Q)^{\times*}$ the group of Dirichlet characters modulo $Q$ (which we identify with the dual group of $(\F_q[x]/Q)^{\times}$). If $H\subset (\F_q[x]/Q)^{\times*}$ is a subset, we denote by $H^\pr$ the set of primitive characters in $H$.

To a Dirichlet character $\chi$ one attaches its Dirichlet $L$-function
\begin{equation}\label{eq:def_dir_l}
L(u,\chi)=\sum_{F\in\mathcal{M}} \chi(F)u^{\deg{F}} =\prod_{P\in\mathcal{P}}(1-\chi(P)u^{\deg{P}})^{-1}=\sum_{r=0}^\infty A_r(\chi)u^r,
\end{equation}
where $A_r(\chi)=\sum_{F\in\mathcal{M}\atop{\deg F=r}}\chi(F)$ and $\mathcal{P}$ is the set of prime polynomials in $\field_q[x]$, where by a prime polynomial we always mean a monic irreducible polynomial. 
If $\chi$ is non-trivial then $A_r(\chi)=0$ for $r\ge m=\deg Q$ and thus $L(u,\chi)$ is a polynomial of degree $\le m-1$ in the variable $u$.

If $\chi$ is a primitive character, $L(u,\chi)$ factors as follows:
$$
L(u,\chi)=(1-u)^{e(\chi)}\prod_{i=1}^{m-1-e(\chi)}(1-\rho_iq^{1/2}u).
$$
By the Riemann Hypothesis for curves over finite fields we also have $|\rho_i|=1$, and $\rho_i$ are called the normalized (inverse) roots or zeros of $L(u, \chi)$.

\subsection{Associating a Dirichlet character with a polynomial A-S curve}
\label{sec:dir_pol}

In the present subsection we recall the content of \cite{Ent12}*{\S 7}, where the Dirichlet character associated with a polynomial A-S $L$-function and its basic properties were studied.
Recall that we fix a non-trivial additive character $\psi:\F_p\to\C^\times$. Let $d\in\N$ be coprime with $p$ and define the following subfamily of $\AS_d^0$:
\begin{equation}\label{eq:fd}\mathcal F_d=\{f\in\AS_d^0:f(0)=0\}=\left\{f=\sum_{i=1}^da_ix^i\in\F_q[x]:a_i=0\mbox{ if }i|p\right\}.\end{equation}
We have \begin{equation}\label{eq:as_fd}\AS_d^0=\bigsqcup_{b\in\F_q}\{f+b:f\in\mathcal F_d\}\end{equation}
(disjoint union).

\begin{lem}\label{lem:twist} Let $L(u,f,\psi)$ be an A-S $L$-function (here we only assume that $f\in\F_q(x)\setminus\F_q$ is not of the form $f=h^p-h,h\in\F_q(x)$) with normalized (inverse) roots $\rho_1,\ldots,\rho_m,m=2\g(C_f)/(p-1)$. Then $L(u,f+b,\psi)=L(\psi(\tr_{q/p}b)\cdot u,f,\psi)$.\end{lem}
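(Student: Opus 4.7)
The plan is to unwind the exponential-sum definition (\ref{eqn_LfpsiDef}) of the A-S $L$-function and observe that shifting $f$ by a constant only twists each local factor by a root of unity that can be reabsorbed into the variable $u$.

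First I would note that since $b$ is a constant, the rational functions $f$ and $f+b$ have the same pole set on $\PP^1$, so the summation range $\{\alpha\in\F_{q^r}\cup\{\infty\}:f(\alpha)\ne\infty\}$ appearing in (\ref{eqn_LfpsiDef}) is unchanged under $f\mapsto f+b$. Then, using additivity of $\tr_{q^r/p}$ and the fact that $\psi$ is a character of $(\F_p,+)$, the $\alpha$-independent factor $\psi(\tr_{q^r/p}(b))$ pulls out of the inner sum, leaving behind exactly the sum that appears in $L(u,f,\psi)$.

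Next I would evaluate this pulled-out factor as an $r$-th power. Using the tower formula $\tr_{q^r/p}=\tr_{q/p}\circ\tr_{q^r/q}$ together with the identity $\tr_{q^r/q}(b)=rb$ for $b\in\F_q$, one obtains $\psi(\tr_{q^r/p}(b))=\psi(\tr_{q/p}(b))^r$. This step also pins down the meaning of the notation $\psi(b)$ appearing in the lemma statement: for $b\in\F_q$ it stands for $\psi(\tr_{q/p}(b))$, the standard extension of $\psi$ from $\F_p$ to $\F_q$ by composition with the trace, consistent with the conventions implicit in (\ref{eqn_LfpsiDef}). Absorbing this $r$-th power into $u^r$ converts the exponential sum defining $L(u,f+b,\psi)$ into the one defining $L(\psi(\tr_{q/p}(b))u,f,\psi)$, which is the claim.

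There is no real obstacle here; the whole argument is a few lines of manipulation of the definition. The one point worth flagging is the trace computation $\tr_{q^r/q}(b)=rb$, since it is what produces the correct exponent $r$ and allows the twist to be absorbed into the variable $u$. As a consistency check, when $\tr_{q/p}(b)=0$ the identity degenerates to $L(u,f+b,\psi)=L(u,f,\psi)$, matching the $q/p$-fold multiplicity of isomorphism classes of A-S covers noted in the description of $\AS_d^0$ and $\AS_d^\ord$.
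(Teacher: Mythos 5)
Your proof is correct and is exactly the computation the paper has in mind: the paper's own proof of this lemma is the single line ``This is immediate from (\ref{eqn_LfpsiDef})'', and your unwinding — unchanged summation range, factoring out $\psi(\tr_{q^r/p}(b))$, and the identity $\tr_{q^r/q}(b)=rb$ giving $\psi(\tr_{q/p}(b))^r$, absorbed into $u^r$ — is precisely the omitted verification. Your reading of $\psi(b)$ as $\psi(\tr_{q/p}(b))$ is also the convention the paper uses later (e.g.\ in the averaging step yielding $\mathds 1_{p\mid r+s}$).
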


\begin{proof} By (\ref{eqn_LfpsiDef}),
\begin{multline*}L(u,f+b,\psi)=\exp\left[\sum_{r=1}^\infty \sum_{\alpha\in\field_{q^r}\cup\{\infty\}\atop{f(\alpha)\neq\infty}}\psi(\tr_{q^r/p}(f(\alpha)+b))\frac{u^r}{r}\right]\\=
\exp\left[\sum_{r=1}^\infty \sum_{\alpha\in\field_{q^r}\cup\{\infty\}\atop{f(\alpha)\neq\infty}}\psi(\tr_{q^r/p}f(\alpha)+r\cdot\tr_{q/p}b)\frac{u^r}{r}\right]
 =\exp\left[\sum_{r=1}^\infty \sum_{\alpha\in\field_{q^r}\cup\{\infty\}\atop{f(\alpha)\neq\infty}}\psi(\tr_{q^r/p}f(\alpha))\psi(\tr_{q/p} b)^r\frac{u^r}{r}\right]
\\=L(\psi(\tr_{q/p}b)u,f,\psi).\end{multline*}

\end{proof}

Lemma \ref{lem:twist} combined with (\ref{eq:as_fd}) shows that studying $L$-functions in the 
family $\AS_d^0$ essentially reduces to studying $L$-functions in the family $\mathcal F_d$. We 
will mostly work with $\mathcal F_d$ and only in the end deduce results for $\AS_d^0$. The main 
reason working with $\mathcal F_d$ is preferable is that it can be related to a family of 
Dirichlet $L$-functions. Let $f\in\mathcal F_d^0$ be a polynomial. We now define a function $\chi_f:
\F_q[x]\to\C^\times$ which will turn out to be a primitive Dirichlet character modulo $x^{d+1}$.

Let $c\in\field_q[x]$ be a polynomial of degree $l$. If $x|c$ we put $\chi_f(c)=0$. Otherwise we may factor $c(x)$ over the algebraic closure of $\field_q$:
$$
c(x)=c(0)\prod_{i=1}^{l}(1-\alpha_i x),
$$
and define
\begin{equation}\label{eqn_chifDef}
    \chi_f(c)=\psiof{\tr_{q/p}\sum_{i=1}^l f(\alpha_i)},
\end{equation}
where we recall that $\tr_{q/p}$ denotes the trace map from $\field_q$ to $\field_p$. To show that the right hand side in \eqref{eqn_chifDef} is well-defined we note that $\sum_{i=1}^l f(\alpha_i)\in\field_q$ since it is a symmetric function in the roots of $c$ with coefficients in $\F_q$. 

\begin{proposition}\label{prop:astodir} Assume $(d,p)=1$. 
\begin{enumerate}
\item[(i)] For $f\in\mathcal F_d$ the function $\chi_f$ defined above (restricted to monic polynomials) is a primitive Dirichlet character modulo $x^{d+1}$ of order $p$, i.e. $\chi_f^p$ is trivial. In particular $\chi_f$ is even (since $p$ is coprime with the order of $\F_q^\times$).
\item[(ii)] The map 
$$\mathcal F_d\to\{\mbox{primitive Dirichlet characters modulo }x^{d+1}\mbox{ of order }p\}$$ given by $f\mapsto \chi_f$ is a bijection.
\item[(iii)] $L(u,\chi_f)=(1-u)L(u,f,\psi)$.
\end{enumerate}
\end{proposition}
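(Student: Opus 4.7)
The plan is to verify the three parts in turn: (i) is essentially an unpacking of (\ref{eqn_chifDef}); (ii) pairs a group-theoretic cardinality count with an injectivity argument; (iii) identifies the logarithmic derivatives of the two $L$-functions.

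For (i), multiplicativity of $\chi_f$ on polynomials coprime to $x$ is immediate since the reciprocal-root set of a product is the union of those of the factors. To show $\chi_f(c)$ depends only on $c\bmod x^{d+1}$, I expand $\sum_i f(\alpha_i)=\sum_{j=1}^d a_j\, p_j(\alpha)$ with $p_j$ the power-sum symmetric functions and invoke Newton's identities to express $p_j$ ($j\le d$) in terms of $e_1(\alpha),\ldots,e_j(\alpha)$; the relation $e_k(\alpha)=(-1)^k c_k/c_0$ (from $c(x)=c_0\prod_i(1-\alpha_i x)$) then shows these depend only on $c_0,\ldots,c_d$. The values of $\chi_f$ are $p$-th roots of unity, so $\chi_f^p\equiv 1$. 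For primitivity, I compute $\chi_f(1-\alpha x^d)$ for $\alpha\in\F_q$: the reciprocal roots are the $d$-th roots of $\alpha$ in $\overline{\F}_q$, whose $j$-th power sums vanish for $1\le j<d$ and equal $d\alpha$ for $j=d$, giving $\chi_f(1-\alpha x^d)=\psi(\tr(d\,a_d\,\alpha))$; since $p\nmid d$ and $a_d\ne 0$, the $\F_p$-linear map $\alpha\mapsto\tr(d a_d\alpha)$ is surjective, so this is nontrivial for some $\alpha$, and $\chi_f$ does not factor through $x^d$. Combined with $\chi_f^p=1$, the order is exactly $p$.

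For (ii), introduce $\widetilde{\mathcal F}_d=\{f=\sum_{i=1}^d a_ix^i:a_i=0\text{ if }p\mid i\}$ (dropping $a_d\ne 0$); then $\Theta:f\mapsto\chi_f$ is a group homomorphism from $\widetilde{\mathcal F}_d$ to Dirichlet characters modulo $x^{d+1}$ of order dividing $p$. I match cardinalities: $|\mathcal F_d|=(q-1)q^{d-1-\floor{d/p}}$, while on the target side, writing $(\F_q[x]/x^{d+1})^\times=\F_q^\times\times U_1$ with $U_1=1+x\F_q[x]/x^{d+1}$ and using $\gcd(|\F_q^\times|,p)=1$, the count reduces to $|U_1/U_1^p|$. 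In characteristic $p$ we have $(1+v)^p=1+v^p$; writing $v=\sum b_jx^j$ gives $v^p=\sum b_j^p x^{jp}$ in $\F_q[x]/x^{d+1}$, whence $U_1^p=\{1+\sum_{jp\le d}c_jx^{jp}:c_j\in\F_q\}$ has size $q^{\floor{d/p}}$ and $|U_1/U_1^p|=q^{d-\floor{d/p}}$. Subtracting the analogous count of characters trivial on $U_d$ (the imprimitive ones) leaves $(q-1)q^{d-1-\floor{d/p}}=|\mathcal F_d|$ primitive characters of order $p$. Bijectivity now follows from injectivity of $\Theta$, which I prove by downward induction on admissible indices: assuming $a_{i'}=0$ for every admissible $i'>i$, the higher-harmonic contributions $a_{i\ell}$ ($\ell\ge 2$) to $\chi_f(1-\alpha x^i)$ vanish by hypothesis, reducing the formula to $\psi(\tr(i\,a_i\,\alpha))$, which forces $a_i=0$ when $\chi_f\equiv 1$.

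For (iii), I compare logarithmic derivatives. The Euler product yields
\begin{equation*}
u\frac{d}{du}\log L(u,\chi_f)=\sum_{r\ge 1}c_r(\chi_f)\,u^r,\qquad c_r(\chi_f)=\sum_{e\mid r}e\sum_{\deg P=e}\chi_f(P)^{r/e}.
\end{equation*}
Since $\chi_f(x)=0$, only primes $P\ne x$ contribute, and each such $P$ of degree $e\mid r$ has exactly $e$ roots in $\F_{q^r}^\times$, so $c_r(\chi_f)=\sum_{\alpha\in\F_{q^r}^\times}\chi_f(m_\alpha)^{r/\deg m_\alpha}$ with $m_\alpha$ the minimal polynomial of $\alpha$ over $\F_q$. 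Direct computation using Galois invariance and $f\in\F_q[x]$ gives $\chi_f(m_\alpha)=\psi(\tr_{q^e/p}f(1/\alpha))$; trace transitivity along $\F_{q^e}\subset\F_{q^r}$ then yields $\chi_f(m_\alpha)^{r/\deg m_\alpha}=\psi(\tr_{q^r/p}f(1/\alpha))$. Substituting $\beta=1/\alpha$ converts $c_r(\chi_f)$ to $\sum_{\beta\in\F_{q^r}^\times}\psi(\tr_{q^r/p}f(\beta))=S_r(f)-1$ (using $f(0)=0$), where $S_r(f)$ denotes the exponential sum in (\ref{eqn_LfpsiDef}). Since $u\frac{d}{du}\log[(1-u)L(u,f,\psi)]=\sum_r(S_r(f)-1)u^r$ matches term by term and both series equal $1$ at $u=0$, identity (iii) follows. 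The main technical obstacle is precisely this bookkeeping: correctly tracking the reciprocal-root convention in (\ref{eqn_chifDef}) versus the ordinary roots of $m_\alpha$, and threading trace transitivity through $\F_p\subset\F_q\subset\F_{q^e}\subset\F_{q^r}$ so that the Dirichlet prime-power sum collapses to the correct Artin-Schreier exponential sum; parts (i) and (ii) reduce to routine manipulations once Newton's identities and the $p$-th power structure on $U_1$ are in hand.
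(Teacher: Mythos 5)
Your proposal is correct, and it is worth noting that the paper itself gives no argument here: its ``proof'' of Proposition \ref{prop:astodir} is a citation to \cite{Ent12}*{Lemmas 7.1, 7.2}, so you have supplied a self-contained verification where the text defers to a reference. Each of your three steps checks out. In (i), Newton's identities hold with integer coefficients over any commutative ring, so expressing $p_j$ ($j\le d$) through $e_1,\ldots,e_j$ and hence through $c_0,\ldots,c_d$ is valid in characteristic $p$, and your evaluation $\chi_f(1-\alpha x^d)=\psiof{\tr_{q/p}(d\,a_d\alpha)}$ (using that the power sums of the $d$-th roots of $\alpha$ vanish below degree $d$) correctly rules out period $x^d$, which suffices since every proper divisor of $x^{d+1}$ divides $x^d$. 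In (ii), the count $(q-1)q^{d-1-\floor{d/p}}$ on both sides is right — the identity $\floor{(d-1)/p}=\floor{d/p}$, needed when you subtract the imprimitive characters, is exactly where the hypothesis $(d,p)=1$ enters — and your downward induction for injectivity is sound because every index $i\ell$ with $\ell\ge 2$ is either divisible by $p$ (so $a_{i\ell}=0$ by definition of $\widetilde{\mathcal F}_d$) or admissible and larger than $i$ (so covered by the inductive hypothesis); the base case $i=d$ works since $d$ is the largest admissible index. In (iii), the identification of reciprocal roots of $m_\alpha$ with the conjugates of $1/\alpha$, the trace transitivity through $\F_p\subset\F_q\subset\F_{q^e}\subset\F_{q^r}$, and the bookkeeping of the $\alpha=0$ term via $f(0)=0$ all land correctly on $(1-u)L(u,f,\psi)$. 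The only cosmetic omission is evenness in (i), but as the statement itself notes this is automatic for a character of order $p$ on a group containing $\F_q^\times$ of order prime to $p$.
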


\begin{proof} This is a restatement of \cite{Ent12}*{Lemmas 7.1, 7.2}.\end{proof}

\begin{corollary}
The collection of $L$-function $\{L(u,f,\psi):f\in\mathcal F_d\}$ is independent of the choice of $\psi$. In particular if $X:\mathcal F_d\to\C$ is a function depending only on the roots of $\Lfp$ then its mean $\mean{X}_{f\in\mathcal F_d}$ is independent of the choice of the character $\psi$. The same applies to $\mathcal \AS^0_d$.
\end{corollary}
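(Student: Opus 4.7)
The plan is to deduce both claims directly from Proposition~\ref{prop:astodir}(ii)--(iii) (for $\mathcal F_d$) and Lemma~\ref{lem:twist} (to pass to $\AS_d^0$). The key $\psi$-independent objects are the set $H_d$ of primitive Dirichlet characters modulo $x^{d+1}$ of order $p$, and the multiset $\{\psi(\tr_{q/p}(b)) : b\in\F_q\}$, which always equals $\mu_p$ (the group of $p$-th roots of unity) with uniform multiplicity $q/p$, independent of the choice of non-trivial $\psi$ (because $\psi$ induces an isomorphism $\F_p \xrightarrow{\sim} \mu_p$ and $\tr_{q/p}$ is a surjection with equal fibers). For the first claim, Proposition~\ref{prop:astodir}(ii) gives a bijection $f \mapsto \chi_f$ from $\mathcal F_d$ onto $H_d$: the bijection itself depends on $\psi$, but its range does not. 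Combined with (iii) this yields the equality of multisets
$$\{L(u, f, \psi) : f \in \mathcal F_d\} = \left\{\frac{L(u, \chi)}{1-u} : \chi \in H_d\right\},$$
whose right-hand side is manifestly $\psi$-free. Any $X$ of the form $X(f) = \tilde X(L(u, f, \psi))$ then has $\mean{X}_{f\in\mathcal F_d}$ equal to the average of $\tilde X$ over the $\psi$-free multiset above, which proves the claim for $\mathcal F_d$.

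For $\AS_d^0$ I would use (\ref{eq:as_fd}) to decompose $\AS_d^0 = \bigsqcup_{b \in \F_q}(\mathcal F_d + b)$, and Lemma~\ref{lem:twist} to rewrite $L(u, f+b, \psi) = L(\psi(\tr_{q/p}(b)) u, f, \psi)$. Combining these,
$$\mean{X}_{g \in \AS_d^0} = \frac{1}{q} \sum_{b \in \F_q} \mean{\tilde X(L(\psi(\tr_{q/p}(b)) u, f, \psi))}_{f \in \mathcal F_d} = \frac{1}{p} \sum_{\zeta \in \mu_p} \mean{\tilde X(L(\zeta u, f, \psi))}_{f \in \mathcal F_d},$$
where the second equality groups the $b$'s by $\zeta = \psi(\tr_{q/p}(b))$, each $\zeta \in \mu_p$ arising $q/p$ times. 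For each fixed $\zeta$, the function $f \mapsto \tilde X(L(\zeta u, f, \psi))$ depends on $f$ only through $L(u, f, \psi)$, so the first part applies and each inner mean is $\psi$-independent; summing over the $\psi$-free set $\mu_p$ then gives the result.

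There is no substantial obstacle; the proof is a bookkeeping exercise. The only point requiring attention is the double occurrence of $\psi$ in the $\AS_d^0$ argument, once in the twist factor $\psi(\tr_{q/p}(b))$ and once in the $L$-function itself. The first is neutralized by the observation that the multiset of twists is the same for every non-trivial $\psi$, and the second by the $\mathcal F_d$ case already established.
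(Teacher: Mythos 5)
Your proof is correct and follows essentially the same route as the paper's: the paper's own argument is just the two observations that the target set in Proposition~\ref{prop:astodir}(ii) is $\psi$-free and that the $\AS_d^0$ case follows from Lemma~\ref{lem:twist}, which you have simply spelled out in full. Your explicit remark that the multiset of twist factors $\{\psi(\tr_{q/p}(b)):b\in\F_q\}$ equals $\mu_p$ with uniform multiplicity $q/p$ for every non-trivial $\psi$ is a detail the paper leaves implicit, and is worth having.
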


\begin{proof}
The right hand side in Proposition \ref{prop:astodir}(ii) does not depend on $\psi$. The claim for $\AS_d^0$ follows from Lemma \ref{lem:twist}.
\end{proof}

\subsection{The case of an odd polynomial A-S curve}\label{sec:odd_as_to_dir}

In the present subsection we keep the notation and setting of section \ref{sec:dir_pol}. We assume $p>2$ and $(d,2p)=1$. Note that $\AS_d^{0,odd}\subset\mathcal F_d$ ($\mathcal F_d$ is defined by (\ref{eq:fd})), in particular for $\AS_d^{0,odd}$ we have the associated character $\chi_f$ defined by (\ref{eqn_chifDef}).

\begin{lem}\label{lem:real}
For $f\in\oasfam$ we have $\Lfp\in\mathbb{R}[z]$. 
\end{lem}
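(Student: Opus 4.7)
The plan is to use the defining exponential expression \eqref{eqn_LfpsiDef} for $L(u,f,\psi)$ and apply the substitution $\alpha \mapsto -\alpha$, exploiting the oddness of $f$ together with the fact that $\psi$ takes values on the unit circle so that $\psi(-x) = \overline{\psi(x)}$.

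More explicitly, since $f\in\AS_d^{0,\odd}\subset\mathcal F_d$ is a polynomial (so the $\alpha=\infty$ term is excluded and $f(\alpha)\neq\infty$ for all $\alpha\in\F_{q^r}$), the logarithm of $L(u,f,\psi)$ is
$$\log L(u,f,\psi)=\sum_{r=1}^\infty \frac{S_r}{r}u^r,\quad\mbox{where}\quad S_r=\sum_{\alpha\in\F_{q^r}}\psi(\tr_{q^r/p}f(\alpha)).$$
The map $\alpha\mapsto -\alpha$ is a bijection of $\F_{q^r}$, and the condition $f(x)=-f(-x)$ gives $f(-\alpha)=-f(\alpha)$, hence $\tr_{q^r/p}f(-\alpha)=-\tr_{q^r/p}f(\alpha)$. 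Substituting and using that $\psi$ takes values in the unit circle, so $\psi(-y)=\overline{\psi(y)}$ for $y\in\F_p$, I would obtain
$$S_r=\sum_{\alpha\in\F_{q^r}}\psi(\tr_{q^r/p}f(-\alpha))=\sum_{\alpha\in\F_{q^r}}\overline{\psi(\tr_{q^r/p}f(\alpha))}=\overline{S_r},$$
so each $S_r$ is real.

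Therefore $\log L(u,f,\psi)\in\R[[u]]$, hence $L(u,f,\psi)\in\R[[u]]$, and since it is a polynomial (of degree $d-1$ by Proposition \ref{prop:astodir} and the factorization $L(u,\chi_f)=(1-u)L(u,f,\psi)$), we conclude $L(u,f,\psi)\in\R[u]$. There is no real obstacle here; the argument is a direct unfolding of the definition, and the hypothesis $p>2$ is what ensures $-1\ne 1$ so that the symmetry $\alpha\mapsto-\alpha$ is nontrivial (though even for $p=2$ this would still be a valid, if trivial, bijection).
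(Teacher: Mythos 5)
Your proof is correct and self-contained: the oddness of $f$ combined with the $\F_p$-linearity of the trace and $\psi(-y)=\overline{\psi(y)}$ does show each power sum $S_r$ is real, hence $\log L(u,f,\psi)\in\R[[u]]$ and $L(u,f,\psi)\in\R[u]$ since it is a polynomial. The paper itself gives no argument here, deferring to \cite{Ent12}*{Lemma 8.1}; your proof is the standard unfolding of the definition and is essentially the expected one.
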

\begin{proof}
See \cite{Ent12}*{Lemma 8.1}.
\end{proof}

Define \begin{equation}\label{eq:H_asodd}H=\{\chi_f|f\in\AS_e^{0,odd},1\le e\le d\}\cup\{1\}.\end{equation}

\begin{prop}\label{prop:odd_as_to_dir}
$H$ is a subgroup of the group of characters of order $p$ modulo $x^{d+1}$. Moreover, $$\{\chi_{f}|f\in\oasfam,\deg f=d\}=\prgrp{H}.$$  
\end{prop}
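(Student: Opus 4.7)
The plan is to derive the proposition directly from formula (\ref{eqn_chifDef}) and Proposition \ref{prop:astodir}(i). I first observe that the formula $\chi_f(c)=\psi(\tr_{q/p}\sum_i f(\alpha_i))$ makes sense verbatim for any polynomial $f$ (with $\chi_0\equiv 1$ on polynomials coprime with $x$), and that, viewed as a function on $\F_q[x]$, every $\chi_f$ with $\deg f\le d$ is a character modulo $x^{d+1}$ which is primitive modulo $x^{\deg f + 1}$ by Proposition \ref{prop:astodir}(i). The pivotal identity, visible from the definition, is
$$\chi_{f_1}(c)\chi_{f_2}(c)=\psi\!\left(\tr_{q/p}\sum_i(f_1+f_2)(\alpha_i)\right)=\chi_{f_1+f_2}(c)$$
for every $c$ with $x\nmid c$ (and both sides vanish if $x\mid c$), so $\chi_{f_1}\chi_{f_2}=\chi_{f_1+f_2}$ as characters modulo $x^{d+1}$.

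To show $H$ is a subgroup, I would now just verify that the set of polynomials $\{0\}\cup\bigcup_{1\le e\le d}\AS_e^{0,\odd}$ is closed under addition and negation. Both defining conditions of $\AS^{0,\odd}$, namely $f(x)=-f(-x)$ and ``no monomials of positive degree divisible by $p$'', are preserved under $\F_p$-linear combinations, and negation clearly preserves $\AS_e^{0,\odd}$. If $f_1+f_2=0$ we land at the trivial character $1\in H$; otherwise, writing $e'=\deg(f_1+f_2)\le\max(\deg f_1,\deg f_2)\le d$, the leading coefficient of $f_1+f_2$ is nonzero at an exponent that is odd (because $f_1+f_2$ is an odd polynomial) and coprime to $p$ (because no $p$-divisible exponent occurs), hence $(e',2p)=1$ and $f_1+f_2\in\AS_{e'}^{0,\odd}\subset H$. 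Combined with the multiplicativity identity, this gives closure and existence of inverses, so $H$ is a subgroup of the group of order-$p$ characters modulo $x^{d+1}$ (order $p$ following from Proposition \ref{prop:astodir}(i) applied at each degree $e$).

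For the second assertion, I would identify $H^\pr$ using the primitivity criterion together with Proposition \ref{prop:astodir}(i). If $f\in\AS_e^{0,\odd}$ with $e<d$, then $\chi_f$ depends only on $c\bmod x^{e+1}$, so $\chi_f$ is imprimitive modulo $x^{d+1}$; the trivial character is likewise not primitive. Hence $H^\pr\subseteq\{\chi_f:f\in\AS_d^{0,\odd},\deg f=d\}$. Conversely, for $f\in\AS_d^{0,\odd}$ with $\deg f=d$, Proposition \ref{prop:astodir}(i) asserts that $\chi_f$ is primitive modulo $x^{d+1}$, and such $\chi_f$ clearly lies in $H$. This gives the desired equality.

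There is no substantive obstacle: essentially everything is unpacking definitions, with the only mild care required being (a) to allow the sum of two odd-polynomial A-S characters to collapse to the trivial character (so that $1$ has to be included in $H$ a priori), and (b) to check that the degree-reduction when leading coefficients cancel still lands inside $\AS_{e'}^{0,\odd}$ for some $e'\le d$, which is automatic because the constraints defining the family are linear.
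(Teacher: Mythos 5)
Your proof is correct. The paper itself gives no argument here --- it simply cites \cite{Ent12}*{\S 8} --- so your write-up is a self-contained verification of what the paper outsources, and it proceeds by what is surely the intended route: the additivity $\chi_{f_1}\chi_{f_2}=\chi_{f_1+f_2}$ visible from (\ref{eqn_chifDef}), the observation that the defining constraints of $\AS_e^{0,\odd}$ (oddness, vanishing of $p$-divisible exponents, zero constant term) are $\F_p$-linear, and the primitivity statement of Proposition \ref{prop:astodir}(i) applied at each degree $e$ with $(e,2p)=1$. You correctly handle the two points where care is needed: the possible collapse $f_1+f_2=0$ (whence the trivial character must be in $H$ a priori), and the fact that when leading terms cancel the new degree $e'$ is still odd and prime to $p$, so $\AS_{e'}^{0,\odd}$ is a legitimate member of the union defining $H$. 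The identification of $H^{\pr}$ is also right: characters coming from $e<d$ factor through $x^{e+1}\mid x^{d+1}$ properly and the trivial character is never primitive, while $f\in\AS_d^{0,\odd}$ gives a primitive character modulo $x^{d+1}$ by Proposition \ref{prop:astodir}(i). Nothing is missing.
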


\begin{proof}
See \cite{Ent12}*{\S 8}.
\end{proof}

For an abelian group $A$ and a group of characters $B\subset A^*$ we denote by 
$$B^\perp=\{a\in A:\chi(a)=1\text{ for all } \chi\in B\}$$ the orthogonal group of $B$. The orthogonality relations imply then that for every $a\in A$,

\begin{equation}\label{eqn_orthogroupAvg}
    \mean{\chi(a)}_{\chi\in B}=
    \begin{cases}
    1,& a\in B^\perp,\\
    0,              & a\not\in B^\perp.
\end{cases}
\end{equation}

\begin{lem}\label{OrthoGroupLemma1}
Let $Q$ denote $x^d$ or $x^{d+1}$ and let $H_Q$ denote the set of characters $\chi\in\left((\F_q[x]/Q)^\times\right)^*$ that induce a character in $H$. Then its orthogonal group 
$H_Q^\perp=\{a\in (\F_q[x]/Q)^\times:\chi(a)=1\forall\chi\in H_Q\}$ equals
$H_Q^\perp=AB$, where $A,B\subset(\mathbb{\mathbb{F}}_{q}[x]/Q)^{\times}$
are the following subgroups:
$$
A=\{f\in\field_{q}[x]:f(0)\ne0,f(x)\equiv g(x^{2})\pmod{Q},\text{ for some }g\in\field_q[x]\},
$$
$$
B=\{f\in\mathbb{\mathbb{F}}_{q}[x]:f(0)\ne0,f(x)\equiv g(x^{p})\pmod{Q},\text{ for some }g\in\field_q[x]\}.
$$

\end{lem}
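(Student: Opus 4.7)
The plan is to establish the two inclusions $AB\subseteq H_Q^\perp$ and $|H_Q^\perp|\le|AB|$ separately; the first by a direct computation exploiting the defining symmetries of $A$ and $B$, and the second by matching cardinalities.

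First I would verify that $A$ and $B$ are subgroups of $(\F_q[x]/Q)^\times$. Closure under multiplication is immediate since $g_1(x^2)g_2(x^2)=(g_1g_2)(x^2)$ and likewise for $x^p$; closure under inversion follows from the geometric-series expansion of $(g(x^2))^{-1}\bmod Q$, which is again a polynomial in $x^2$. Hence $AB$ is also a subgroup. Next, fix $\chi\in H_Q$ arising from $\chi_f$ with $f\in\AS_e^{0,\odd}$ for some $e$ with $x^{e+1}\mid Q$ (or $f=0$), so that $\chi_f$ factors through $(\F_q[x]/Q)^\times$. For $a\in A$, write $a\equiv g(x^2)\pmod Q$ and factor $g(y)=g(0)\prod_j(1-\beta_j y)$ over $\fqb$; using $1-\beta_jx^2=(1-\sqrt{\beta_j}\,x)(1+\sqrt{\beta_j}\,x)$, the reciprocal roots of $g(x^2)$ come in $\pm$-pairs, so oddness of $f$ gives $f(\sqrt{\beta_j})+f(-\sqrt{\beta_j})=0$ and hence $\chi_f(a)=\psiof{0}=1$. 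For $b\in B$ with $b\equiv h(x^p)\pmod Q$, the characteristic-$p$ factorization $1-\gamma_l x^p=(1-\gamma_l^{1/p}x)^p$ shows that the reciprocal roots of $h(x^p)$ are the elements $\gamma_l^{1/p}$ each taken with multiplicity $p$, so $\sum_i f(\alpha_i)=p\sum_l f(\gamma_l^{1/p})=0$ in characteristic $p$ and $\chi_f(b)=1$. This proves $A\cup B\subseteq H_Q^\perp$ and therefore $AB\subseteq H_Q^\perp$.

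For equality I would match sizes. Setting $D=d$ if $Q=x^{d+1}$ and $D=d-1$ if $Q=x^d$, so that representatives of $(\F_q[x]/Q)^\times$ are polynomials of degree $\le D$ with nonzero constant term, direct counting of admissible exponent patterns gives
$$|A|=(q-1)q^{\lfloor D/2\rfloor},\qquad |B|=(q-1)q^{\lfloor D/p\rfloor},\qquad |A\cap B|=(q-1)q^{\lfloor D/(2p)\rfloor},$$
where for $A\cap B$ one uses that a polynomial lying in both $A$ and $B$ is a polynomial in $x^{\mathrm{lcm}(2,p)}=x^{2p}$ (recall $p$ is odd). On the other hand, combining Proposition \ref{prop:odd_as_to_dir} with the bijection of Proposition \ref{prop:astodir}(ii) identifies the $\F_p$-vector space $H_Q$ with the additive group of polynomials $f\in\F_q[x]$ supported on exponents $i$ with $i$ odd, $p\nmid i$, and $1\le i\le D$; thus $|H_Q|=q^{N_D}$ where $N_D=\#\{i\in\{1,\ldots,D\}:i\text{ odd},\,p\nmid i\}$. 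By inclusion-exclusion, $N_D=D-\lfloor D/2\rfloor-\lfloor D/p\rfloor+\lfloor D/(2p)\rfloor$, so
$$|H_Q^\perp|=\frac{(q-1)q^D}{q^{N_D}}=(q-1)q^{\lfloor D/2\rfloor+\lfloor D/p\rfloor-\lfloor D/(2p)\rfloor}=\frac{|A|\cdot|B|}{|A\cap B|}=|AB|,$$
forcing equality.

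The main technical point is handling the two cases $Q=x^d$ and $Q=x^{d+1}$ uniformly. The subtlety lies in identifying $H_Q$ for $Q=x^d$: characters $\chi_f\in H$ with $\deg f=d$ have conductor exactly $x^{d+1}$ and so do not descend to characters mod $x^d$, while all remaining elements of $H$ (those with $\deg f\le d-1$, together with the trivial character) do descend. This shift of the effective degree from $d$ to $d-1$ is exactly the shift from $D=d$ to $D=d-1$ in the counting above, so once it is recognized the two cases are perfectly parallel and the size match goes through verbatim.
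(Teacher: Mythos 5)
Your proof is correct. Note that the paper itself gives no argument here --- it simply cites \cite{Ent12}*{Lemma 8.5} --- so you have supplied a genuine self-contained proof where the text defers to an external reference. Both halves check out: the inclusion $AB\subseteq H_Q^\perp$ follows exactly as you say from the $\pm$-pairing of the reciprocal roots of $g(x^2)$ (killing the sum because $f$ is odd) and from the multiplicity-$p$ factorization $1-\gamma x^p=(1-\gamma^{1/p}x)^p$ (killing the sum in characteristic $p$); and the cardinality match is right, since the degree-$\le D$ representative of a residue class mod $x^{D+1}$ inherits the support condition on exponents, which justifies $|A|=(q-1)q^{\lfloor D/2\rfloor}$, $|B|=(q-1)q^{\lfloor D/p\rfloor}$, $|A\cap B|=(q-1)q^{\lfloor D/(2p)\rfloor}$, while $|H_Q|=q^{N_D}$ follows from the additivity $\chi_{f_1+f_2}=\chi_{f_1}\chi_{f_2}$ of \eqref{eqn_chifDef} together with the injectivity in Proposition \ref{prop:astodir}(ii), and $|H_Q^\perp|=|(\F_q[x]/Q)^\times|/|H_Q|$ is standard duality. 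Your handling of the conductor shift for $Q=x^d$ (characters $\chi_f$ with $\deg f=d$ do not descend, so the effective degree drops to $D=d-1$) is exactly the point that needs care, and you get it right. The one step worth writing out in a final version is the truncation remark justifying the counts of $A$, $B$, $A\cap B$: two polynomials in $\F_q[x^2]$ of degree $\le D$ that agree mod $x^{D+1}$ are equal, and the truncation of any $g(x^2)$ to degree $\le D$ still lies in $\F_q[x^2]$.
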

\begin{proof}
See \cite{Ent12}*{Lemma 8.5}.
\end{proof}

\subsection{Associating a Dirichlet character with an ordinary Artin-Schreier curve}\label{sec:ordas_to_dir}

Let $d$ be a natural number, $g$ a monic squarefree polynomial of degree $d$ or $d-1$ and $f=h/g\in\AS_{d}^\ord$. Recall that this condition means that $(h,g)=1$ and additionally $\deg h\le d$ if $\deg g=d$ and $\deg h=d$ if $\deg g=d-1$ (equivalently the polar divisor of $f=h/g$ has degree $d$, and no non-simple poles). 
For $f=h/g\in\AS_d^\ord$ 
and $c\in\F_q[x]$ define
\begin{equation}\label{eq:chihgc}
\chi_f(c)=\left[\begin{array}{ll}\psi\left(
\tr_{q/p}\left(\sum_{c(\alpha)=0}
f(\alpha)\right)
\right),& (c,g)=1,\\0,&(c,g)\neq 1,\end{array}\right.
\end{equation}
the summation being over roots in $\fqb$ counted with multiplicity. 
Note that for $(c,g)=1$, $\sum_{c(\alpha)=0}{f(\alpha)}$ is in $\field_{q}$, since it is a symmetric function in the roots of $c$. Moreover, $\chi_f$ is clearly a strongly multiplicative function. It is not generally a Dirichlet character, but we will see below that it is a Dirichlet character modulo $g^2$ provided $\deg h<\deg g$. The $L$-function of $\chi_f$ is defined as in (\ref{eq:def_dir_l}).

\begin{rem}Note that this definition of $\chi_f$ differs from (\ref{eqn_chifDef}). We will make it clear in the beginning of each section which of the two notions of $\chi_f$ we use, but there should be little confusion because we will only use (\ref{eqn_chifDef}) when considering the polynomial families and (\ref{eq:chihgc}) when considering the ordinary family.\end{rem}

\begin{prop} \label{prop_lhg_is_dirichlet_l}
Let $f=h/g\in\AS^\ord_{d}$. Then $L(u,\chi_f)=(1-\delta(f) u)L(u,f,\psi)$, where
\begin{equation}\label{eq:delta_f}\delta(f)=\left[\begin{array}{ll} \psi\left(\tr_{q/p}f(\infty)\right),&\deg h=\deg g=d,\\1,&\mathrm{otherwise}.\end{array}\right.\end{equation}
\end{prop}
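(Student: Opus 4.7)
The plan is to establish the identity between $L$-functions at the level of formal power series by comparing their logarithmic expansions and identifying the difference as $-\log(1-\delta(f)u)$.

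First, using the Euler product of the Dirichlet $L$-function, I would write
$$\log L(u,\chi_f) = \sum_{r\ge 1}\frac{u^r}{r}\sum_{\substack{F\in\mathcal M\\ \deg F=r}}\Lambda(F)\chi_f(F),$$
and then convert the prime-power sum on the right into a sum over points of $\F_{q^r}$. The key observation is that for a prime $P\in\F_q[x]$ of degree $d'$ with $(P,g)=1$ and any root $\alpha\in\fqb$ of $P$, the other roots of $P$ are exactly the Frobenius conjugates of $\alpha$, so $\sum_{P(\alpha')=0}f(\alpha')=\tr_{q^{d'}/q}f(\alpha)$. Combined with (\ref{eq:chihgc}) this gives $\chi_f(P)=\psi(\tr_{q^{d'}/p}f(\alpha))$, and the tower identity $\tr_{q^r/q^{d'}}(a)=(r/d')a$ for $a\in\F_{q^{d'}}$ then yields $\chi_f(P)^{r/d'}=\psi(\tr_{q^r/p}f(\alpha))$. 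Grouping the contributions by $d'\mid r$ and using that each of the $d'$ roots of a given prime contributes the same value, I would obtain
$$\sum_{\substack{F\in\mathcal M\\ \deg F=r}}\Lambda(F)\chi_f(F)=\sum_{\substack{\alpha\in\F_{q^r}\\ g(\alpha)\ne 0}}\psi(\tr_{q^r/p}f(\alpha)).$$

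Next, I would compare this with the expansion of $\log L(u,f,\psi)$ given by (\ref{eqn_LfpsiDef}), whose $r$-th coefficient sums $\psi(\tr_{q^r/p}f(\alpha))$ over $\alpha\in\F_{q^r}\cup\{\infty\}$ with $f(\alpha)\ne\infty$. Since $(h,g)=1$ and $f=h/g$, for finite $\alpha\in\F_{q^r}$ we have $f(\alpha)\ne\infty \iff g(\alpha)\ne 0$, so the two sums agree on finite points. The only discrepancy is the $\alpha=\infty$ term, which appears exactly when $f(\infty)\ne\infty$, i.e., when $\deg h\le\deg g$; in that case $f(\infty)\in\F_q$ and $\tr_{q^r/p}f(\infty)=r\,\tr_{q/p}f(\infty)$, so the extra contribution to the $r$-th coefficient equals $\delta(f)^r$ with $\delta(f)=\psi(\tr_{q/p}f(\infty))$ (which simplifies to $1$ when $\deg h<\deg g$, matching the proposition's case split). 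Summing $\sum_{r\ge 1}\delta(f)^r u^r/r=-\log(1-\delta(f)u)$ and exponentiating gives the claimed identity $L(u,\chi_f)=(1-\delta(f)u)L(u,f,\psi)$.

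The main technical step is the Galois-theoretic identification $\chi_f(P)=\psi(\tr_{q^{\deg P}/p}f(\alpha))$ for a single root $\alpha$ of $P$, together with the tower decomposition $\tr_{q^r/p}=\tr_{q^{d'}/p}\circ\tr_{q^r/q^{d'}}$; once these are in hand, passing from a prime-power sum to a sum over $\F_{q^r}$-points is essentially bookkeeping, and the case analysis for $\delta(f)$ reduces to whether $\infty$ lies outside the polar locus of $f$.
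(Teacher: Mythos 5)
Your argument is essentially the paper's own proof in different clothing: the paper applies $u\frac{\d}{\d u}\log$ to the Euler product of $L(u,\chi_f)$ and to \eqref{eqn_LfpsiDef} and matches coefficients, which is exactly your von Mangoldt expansion of $\log L(u,\chi_f)$; both hinge on the identity $\psi(\tr_{q^r/p}f(\alpha))=\chi_f(c)^{r/\deg c}$ for $c$ the minimal polynomial of $\alpha$, and both isolate the $(1-\delta(f)u)$ factor as the contribution of $\alpha=\infty$. For the case $\deg h\le\deg g=d$ your write-up is complete and correct.

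The one substantive issue is the remaining case $\deg g=d-1<\deg h=d$. You correctly note that the $\alpha=\infty$ term appears exactly when $f(\infty)\ne\infty$, i.e.\ when $\deg h\le\deg g$; but then in the case $\deg h>\deg g$ your computation produces \emph{no} extra term at all, hence $L(u,\chi_f)=L(u,f,\psi)$, whereas the statement asserts $\delta(f)=1$ and hence an extra factor $(1-u)$. Your proposal does not reconcile these, and the closing remark that ``the case analysis for $\delta(f)$ reduces to whether $\infty$ lies outside the polar locus'' papers over the mismatch. A degree count confirms that your computation, not the displayed formula, is what the argument yields there: for $\deg g=d-1$ both $L(u,f,\psi)$ and $L(u,\chi_f)$ have degree $2d-2$, so no $(1-u)$ factor can be present. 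To be fair, the paper's own proof has the identical blind spot — it inserts the summand $\delta(f)^r$ into the expansion of $\log L(u,f,\psi)$ uniformly, which \eqref{eqn_LfpsiDef} justifies only when $f(\infty)\ne\infty$ — and the discrepancy is harmless downstream, since it only shifts $q^{r/2}T^r$ by $1$, which is absorbed into error terms where this case is used (section \ref{sec:ord_as_fam}). Still, as written your proof establishes the proposition only for $\deg h\le\deg g$; you should either restrict to that case or state explicitly that for $\deg h>\deg g$ the argument gives $L(u,\chi_f)=L(u,f,\psi)$.
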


\begin{proof}
Let $r$ be a natural number and $\alpha\in\field_{q^r}$ an element with (monic) minimal polynomial $c\in\field_q[x]$. Since $\alpha\in\field_{q^r}$ we have $\deg c|r$. Note that 
\begin{equation}\label{eq:psi_tr_chi}\psi\left(
\tr_{q^r/p}f(\alpha)
\right)=
\psiof{\frac{r}{\deg c}\tr_{q^{\deg c}/p}f(\alpha)
}=\chi_f(c)^{r/\deg c}\end{equation}
by (\ref{eq:chihgc}). 
By (\ref{eqn_LfpsiDef}) and (\ref{eq:delta_f}) we have
$$L(u,f,\psi)=\exp\left(\sum_{r=1}^\infty\left[\frac{u^r}{r}\left(\delta(f)^r+\sum_{\alpha\in\field_{q^r}, g(\alpha)\ne 0}\psi\left(\tr_{q^r/p} f(\alpha)\right)\right)\right]\right).$$
Using the multiplicativity of the function $\chi_f(c)$ and (\ref{eq:psi_tr_chi}),
\begin{multline*}
u\frac{\d\log L(u,f,\psi)}{\d u}=\sum_{r=1}^\infty u^r\left(\delta(f)^r+\sum_{\alpha\in
\field_{q^r}, g(x)\ne 0}\psi\left(\tr_{q^r/p} f(\alpha)\right)\right)
\\ =\frac {\delta(f)u}{1-\delta(f)u}+\sum_{r=1}^\infty u^r 
\sum_{c\in\Pe\atop{\deg c|r}}(\deg c)\chi_f(c)^{r/\deg c}
=\frac {\delta(f)u}{1-\delta(f)u}+\sum_{c\in\Pe}\sum_{k=1}^\infty (\deg c)\chi_f(c)^k u^{k(\deg c)}\\=\frac {\delta(f)u}{1-\delta(f)u}+\sum_{c\in\Pe}\frac{(\deg c)
\chi_f(c)u^{\deg c}}{1-\chi_f(c)u^{\deg c}}
=\frac {\delta(f)u}{1-\delta(f)u}+\sum_{c\in\Pe}u\frac{\d\log(1-\chi_f(c)u^{\deg c})^{-1}}{\d u}\\=u\frac{\d\log \left((1-\delta(f)u)^{-1}L(u,\chi_f)\right)}{\d u}.
\end{multline*}
Since $L(0,f,\psi)=L(0,\chi_f)=1$ we have $L(u,f,\psi)=(1-\delta(f)u)^{-1}L(u,\chi_f)$.
\end{proof}

It turns out that for $f=h/g\in\AS_d^\ord$ the function $\chi_f$ is a Hecke character of $\F_q(x)$ (see \cite{Ros02}*{\S 9} for this notion) with modulus $g^2$ if $\deg h\le\deg g=d$ and $g^2\cdot\infty^2$ if $\deg h>\deg g=d-1$. We will only need this fact for the case $\deg h<\deg g$, in which case $\chi_f$ is in fact a Dirichlet character modulo $g^2$. For monic squarefree $g$ of degree $d$ we denote
\begin{equation}\label{eq:def_Hg}\hg=\{f=h/g: h\in\F_q[x],(h,g)=1,\deg h<d\}\subset\AS_{d,g}^\ord.\end{equation} 

\begin{prop}\label{prop:chi_f_is_dir}
For $f=h/g\in\hg$, the function $\chi_f$ is a Dirichlet character modulo $g^2$.
\end{prop}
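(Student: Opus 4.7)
The plan is to reduce the claim to the identity $\chi_f(c_1)=\chi_f(c_2)$ whenever $(c_1,g)=1$ and $c_1\equiv c_2\pmod{g^2}$, and then verify it via an explicit partial-fraction formula for the exponent defining $\chi_f(c)$.

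First, I would observe that $\chi_f$ is strongly multiplicative by construction: the multiset of roots of $c_1c_2$ in $\fqb$ is the union with multiplicity of the root-multisets of $c_1$ and $c_2$, so $\sum_{c(\alpha)=0}f(\alpha)$ is additive in $c$ and hence $\psi\circ\tr_{q/p}$ applied to it is multiplicative. Combined with the stipulation $\chi_f(c)=0$ for $(c,g)\neq 1$, showing that $\chi_f$ is a Dirichlet character modulo $g^2$ reduces to checking that $\chi_f$ factors through $(\F_q[x]/g^2)^\times$, i.e.\ that $\chi_f(c_1)=\chi_f(c_2)$ whenever $c_1,c_2$ are coprime to $g$ and $c_1\equiv c_2\pmod{g^2}$.

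Next, since $\deg h<\deg g=d$ and $g$ is squarefree, I would perform the partial-fraction decomposition over $\fqb$:
\[
f(x)=\frac{h(x)}{g(x)}=\sum_{\gamma:\,g(\gamma)=0}\frac{h(\gamma)}{g'(\gamma)}\cdot\frac{1}{x-\gamma}.
\]
Factoring $c(x)=a\prod_i(x-\alpha_i)$ with $a\in\F_q^\times$, and using the logarithmic-derivative identity $c'(y)/c(y)=\sum_i 1/(y-\alpha_i)$ evaluated at each root $\gamma$ of $g$, a direct substitution and interchange of these finite sums yields
\[
\sum_{c(\alpha)=0}f(\alpha)=\sum_{\gamma:\,g(\gamma)=0}\frac{h(\gamma)}{g'(\gamma)}\sum_i\frac{1}{\alpha_i-\gamma}=-\sum_{\gamma:\,g(\gamma)=0}\frac{h(\gamma)}{g'(\gamma)}\cdot\frac{c'(\gamma)}{c(\gamma)}.
\]
In particular $\sum_{c(\alpha)=0}f(\alpha)$ depends on $c$ only through the values $c(\gamma)$ and $c'(\gamma)$ at the roots of $g$.

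Finally, if $c_1\equiv c_2\pmod{g^2}$ and $(c_1,g)=1$, then also $(c_2,g)=1$; writing $c_1-c_2=g^2 r$ with $r\in\F_q[x]$, for every root $\gamma$ of $g$ we have $(g^2r)(\gamma)=0$ and $(g^2r)'(\gamma)=(2gg'r+g^2r')(\gamma)=0$, so $c_1(\gamma)=c_2(\gamma)$ and $c_1'(\gamma)=c_2'(\gamma)$. The displayed formula then gives $\sum_{c_1(\alpha)=0}f(\alpha)=\sum_{c_2(\alpha)=0}f(\alpha)$, whence $\chi_f(c_1)=\chi_f(c_2)$. The main---and essentially the only---delicate point is the partial-fraction step, which uses $\deg h<\deg g$ in an essential way; were $\deg h\geq\deg g$ (as is allowed in the broader family $\AS_{d,g}^{\ord}$) an additional contribution from the pole of $f$ at infinity would appear, and $\chi_f$ would only be a Hecke character of modulus $g^2\cdot\infty^2$ rather than a classical Dirichlet character modulo $g^2$. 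This is why the proposition is formulated for $\hg$.
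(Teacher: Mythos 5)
Your proof is correct and follows essentially the same route as the paper's: both reduce to showing that $\sum_{c(\alpha)=0}f(\alpha)$ depends only on $c\bmod g^2$ by decomposing $f=h/g$ into partial fractions over $\fqb$ and observing that $\sum_{c(\alpha)=0}1/(\alpha-\gamma)$ depends only on $c\bmod(x-\gamma)^2$ for each root $\gamma$ of $g$. The only cosmetic difference is that you identify this quantity as $-c'(\gamma)/c(\gamma)$ via the logarithmic derivative, whereas the paper reads off the same value as $-a_1/a_0$ from the expansion of $c$ around $\gamma$ using Vieta's formula.
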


\begin{proof} The (strong) multiplicativity of $\chi_f$ is immediate from (\ref{eq:chihgc}), so we need to show that $\chi_{h/g}(c)$ depends only on $c$ mod $g^2$. We prove the stronger claim that 
$\sum_{c(\alpha)=0}
{h(\alpha)}/{g(\alpha)}$
depends only on $c$ (mod $g^2$). We factor $g$ over $\fqb$, writing
$
g=\prod_{i=1}^d(x-\beta_i)
$. Note that all the $\beta_i$ are distinct since we assumed $g$ is squarefree in the definition of $\AS_d^\ord$. Since $\deg h<d=\deg g$ we can write a partial fraction decomposition
$$
\frac{h(x)}{g(x)}=\sum_{i=1}^d\frac{c_i}{x-\beta_i},
$$
for some $c_i\in\overline{\field}_q$. This allows us to write
$$
\sum_{c(\alpha)=0}\frac{h(\alpha)}{g(\alpha)}=\sum_{c(\alpha)=0}\sum_{i=1}^d\frac{c_i}{\alpha-\beta_i}=\sum_{i=1}^d c_i\sum_{c(\alpha)=0}\frac{1}{\alpha-\beta_i}.
$$
We claim that for every $i$, $\sum_{c(\alpha)=0}\frac{1}{\alpha-\beta_i}$
depends only on the residue class $c\bmod{(x-\beta_i)^2}$. Then the whole sum will be determined by $c$ mod $g^2=(x-\beta_1)^2\cdots(x-\beta_d)^2$. Fix some $1\le i\le d$ and write 
$$c=a_0+a_1(x-\beta_i)+\cdots+a_r(x-\beta_i)^r,a_j\in\fqb.$$
By Vieta's formula, we have $\sum_{c(\alpha)=0}\frac 1{\alpha-\beta_i}=-a_1/a_0$. Thus it depends only on $c \bmod (x-\beta_i)^2$ and therefore the sum $\sum_{i=1}^dc_i\sum_{c(\alpha)=0}\frac 1{\alpha-\beta_i}$ depends only on $c\bmod g^2$, as required.
\end{proof}

\begin{proposition}\label{prop_distinct_characters}
The map $f\mapsto\chi_f$ gives a bijection between $\mathcal H_g$ and the set of primitive Dirichlet characters modulo $g^2$ of order $p$. In particular for $f\in\mathcal H_g$ the character $\chi_f$ is even.
\end{proposition}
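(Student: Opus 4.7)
The plan is to combine an explicit formula for $\chi_f$ on $1$-units with a cardinality count. Two preliminary observations are immediate: $\chi_f$ takes values in $\mu_p\cup\{0\}$, so it has order dividing $p$, and any $\alpha\in\F_q^\times$ (as a constant polynomial) has no roots, so the sum in \eqref{eq:chihgc} is empty and $\chi_f(\alpha)=\psi(0)=1$, yielding evenness. For the main computation, write $g=\prod_i(x-\beta_i)$ over $\overline{\F}_q$ (distinct roots since $g$ is squarefree) and apply partial fractions to $h/g$. The Vieta's-formula argument used in the proof of Proposition \ref{prop:chi_f_is_dir} shows that for any $u\in\F_q[x]$ with $\deg u<d$ and $c=1+gu$,
$$\sum_{c(\alpha)=0}\frac{h(\alpha)}{g(\alpha)} \;=\; -\sum_i h(\beta_i)\,u(\beta_i).$$
Setting $\langle a,b\rangle_g:=\sum_i a(\beta_i)b(\beta_i)\in\F_q$, this gives $\chi_f(1+gu)=\psi(-\tr_{q/p}\langle h,u\rangle_g)$; and via CRT the $\F_p$-bilinear form $(a,b)\mapsto\tr_{q/p}\langle a,b\rangle_g$ on $\F_q[x]/g$ splits as a direct sum of the standard non-degenerate trace pairings on the $\F_{q^{\deg P}}$, hence is itself non-degenerate.

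Injectivity and primitivity follow quickly. If $\chi_{h_1/g}=\chi_{h_2/g}$, then $\tr_{q/p}\langle h_1-h_2,u\rangle_g=0$ for every $u$, so non-degeneracy gives $h_1\equiv h_2\pmod g$, and the degree bound forces $h_1=h_2$. For primitivity modulo $g^2$, recall that a character mod $g^2$ is primitive iff for each prime $P_0\mid g$ it is non-trivial on the kernel of $(\F_q[x]/g^2)^\times\to(\F_q[x]/(g^2/P_0))^\times$; this kernel is the group of $1$-units $\{1+g(g/P_0)w:w\in\F_q[x]/P_0\}$. On these the formula becomes $w\mapsto\psi\bigl(-\tr_{q^{\deg P_0}/p}\bigl(h(\beta)(g/P_0)(\beta)\,w(\beta)\bigr)\bigr)$ for a fixed root $\beta$ of $P_0$, and since $h(\beta)(g/P_0)(\beta)\neq 0$ (as $(h,g)=1$ and $g$ is squarefree) this is a non-trivial $\F_p$-linear form in $w$. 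Thus $\chi_f$ is primitive, and in particular non-trivial, so of order exactly $p$.

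A matching cardinality count then closes the bijection. Since $g$ is squarefree and $p=\mathrm{char}\,\F_q$, CRT gives $(\F_q[x]/g^2)^\times\cong\prod_{P\mid g}\bigl(\F_{q^{\deg P}}^\times\times\F_{q^{\deg P}}^+\bigr)$ (the second factor being the $1$-units), and any character of order dividing $p$ must be trivial on the prime-to-$p$ first factor; inclusion-exclusion over the primes $P\mid g$ yields exactly $\prod_{P\mid g}(q^{\deg P}-1)$ primitive characters of order $p$ modulo $g^2$. This matches $|\mathcal{H}_g|=\prod_{P\mid g}(q^{\deg P}-1)$, so the injective map $f\mapsto\chi_f$ is a bijection. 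The technical heart of the argument is the partial-fraction/Vieta computation identifying $\chi_f$ on $1$-units with the non-degenerate trace pairing; once that is in hand, injectivity, primitivity, and the cardinality match are all structural.
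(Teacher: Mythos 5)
Your proof is correct, but it reaches injectivity by a genuinely different mechanism than the paper. The paper's injectivity argument is geometric: if $\chi_{f_1}=\chi_{f_2}$ then $\tr_{q^r/p}((f_1-f_2)(\alpha))=0$ for all $\alpha$, so by the additive Hilbert 90 the curve $y^p-y=(f_1-f_2)(x)$ has at least $pq^r-O(1)$ points over every $\F_{q^r}$, contradicting absolute irreducibility; the paper then finishes with the same inclusion--exclusion count of primitive order-$p$ characters that you perform (your CRT packaging and the paper's computation of the $p$-torsion of $(\F_q[x]/(g^2/Q))^{\times}$ give the same number $\varphi(g)$). You instead compute $\chi_f$ explicitly on the $1$-units $1+gu$, and your identity $\sum_{c(\alpha)=0}h(\alpha)/g(\alpha)=-\sum_i h(\beta_i)u(\beta_i)$ checks out (with $c=1+gu$ one has $c(\beta_i)=1$, $c'(\beta_i)=g'(\beta_i)u(\beta_i)$, and the residues $c_i=h(\beta_i)/g'(\beta_i)$ cancel the $g'(\beta_i)$), so $\chi_f$ on $1$-units is $\psi$ of a non-degenerate trace pairing and injectivity is linear algebra. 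This buys two things the paper's write-up does not make explicit: a direct verification that each $\chi_f$ is \emph{primitive} (non-trivial on each kernel of reduction modulo $g^2/P_0$, using $(h,g)=1$ and squarefreeness), which the paper's proof tacitly assumes when it concludes the bijection from injectivity plus the count; and an elementary, purely algebraic argument with no appeal to point counts on $C_{f_1-f_2}$. The trade-off is that your route requires the partial-fraction/Vieta computation, whereas the paper reuses machinery already present. Your evenness observation (constants have no roots, so $\chi_f(\alpha)=\psi(0)=1$) is also a simpler and more direct justification than the paper's appeal to $(p,q-1)=1$.
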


\begin{proof}
First we show that $f\mapsto\chi_f$ is injective on $\mathcal H_g$.

Assume that $\chi_{f_1}=\chi_{f_2}$ for some $f_1,f_2\in\mathcal H_g$. Denote $f=f_1-f_2$ and $f=h/
g,\deg h<d=\deg g$. For every $r\ge 1$, and every $\alpha\in\field_{q^r}$ such that $g(\alpha)\ne 
0$, we have $\tr_{q^r/p}(f(\alpha))=0$ by plugging the minimal polynomial of $\alpha$ over $\F_q$ 
into ($\ref{eq:chihgc}$). By the additive version of Hilbert's Theorem 90 we have $f(\alpha)=b^p-b$ 
for some $b\in\field_{q^r}$. Fixing this $b$ we note that it is also true that $f(\alpha)=(b+i)^p-(b
+i)$ for $i\in\F_p$. Thus there are at least $p$ points defined over $\F_{q^r}$ on the curve $y^p-y=f(x)$ with 
abscissa $x=\alpha$. This is true for all $\alpha\in\field_{q^r}$ with $g(\alpha)\neq 0$, so the 
number of $\field_{q^r}$-rational points on the curve $y^p-y=f(x)$ (and hence on $C_f$) is at least $pq^r-(p-1)\deg g$. 
Note that this is true for every $r\ge 1$. This is impossible since the curve 
$C_f$ is absolutely irreducible and therefore $\#C_f(\F_{q^r})\sim q^r$ as $r\to\infty$.

Hence it remains to show that the number of order $p$ primitive characters modulo $g^2$ is exactly $\#\mathcal H_g$. It is immediate from the definition of $\mathcal H_g$ that $\#\mathcal H_g=\varphi(g)$, where $\varphi(g)=\#(\F_q[x]/g)^\times$ denotes the Euler totient function. 

To count the order $p$ primitive characters modulo $g^2$ we apply inclusion-exclusion ($\mu$ denotes the M\"obius function):
\begin{align*}
\#&\{\text{order }p\text{ primitive characters modulo }g^2\}\\
&=\sum_{Q|g^2}\mu(Q)\cdot\#\{\text{order }p\text{ characters with modulus }g^2/Q\}\\
&=\sum_{Q|g}\mu(Q)\cdot\#\{\text{order }p \text{ characters modulo }g^2/Q\}.
\end{align*}
Factor $g=Q_1\cdots Q_l$ into distinct primes. For $Q=Q_1\cdots Q_s$ we have $g^2/Q=Q_1\cdots Q_s\cdot Q_{s+1}
^2\cdots Q_l^2$ and therefore
$$(\field_q[x]/(g^2/Q))^\times\cong \bigoplus_{i=1}^l\Z/(q^{\deg Q_i}-1)\oplus\bigoplus_{i=s+1}^l(\Z/p)^{\log_pq\cdot\deg Q_i}.$$ The $p$-torsion subgroup of $(\field_q[x]/(g^2/Q))^{\times*}$ is isomorphic to the $p$-torsion subgroup of 
$(\field_q[x]/(g^2/Q))^{\times}$ which has order $\prod_{i=s+1}^lp^{\log_pq\cdot \deg Q_i}=q^{\deg(g/Q)}$.

Hence
$$
\#\{\text{order }p\text{ primitive character modulo }g^2\}=\sum_{Q|g}\mu(Q)q^{\deg (g/Q)}=\varphi(g)=\#\mathcal H_g,
$$
which completes the proof.
\end{proof}

%

\subsection{Averaging traces and trace products over primitive Dirichlet characters}
\label{sec:avtraces}

For an even primitive Dirichlet character $\chi$ modulo $Q$ with $\deg Q=m$ we recall that its $L$-function factors as follows:

\begin{equation}\label{eqn_lchiDef}
L(u,\chi)=(1-u)\prod_{i=1}^{m-2}(1-q^{1/2}\rho_iu),
\end{equation}
with $|\rho_i|=1$. For any integer $r$ define the \emph{$r$-th trace of $\chi$} by 
\begin{equation}\label{eq:def_trace}T_\chi^r=\sum_{i=1}^{m-2}\rho_i^r.\end{equation} This quantity can be interpreted as the (normalized) trace of the $r$-th power of the Frobenius acting on a certain irreducible piece of the \'etale cohomology of the Carlitz curve of $Q$, but we will not need this fact and will only use the elementary expression (\ref{eq:def_trace}). The $r$-th traces are instrumental in studying $L$-zero densities (and other $L$-zero statistics).

Applying the operator $u\frac{\d\log}{\d u}$ to \eqref{eqn_lchiDef}, using the Euler product (\ref{eq:def_dir_l}) and comparing coefficients we get for any integer $r\ge 1$:

\begin{equation}\label{eqn_expressingTracesWithCharacters}
T^r_\chi=\sum_{i=1}^{m-2}\rho_i^r=-q^{-r/2}-q^{-r/2}\sum_{P\in\mathcal{P}\atop{\deg P|r}}(\deg P)\chi(P)^{r/\deg P}=-q^{-r/2}-q^{-r/2}\sum_{\deg c=r\atop{\mathrm{monic}}}\Lambda(c)\chi(c),
\end{equation}
where 
$$\Lambda(c)=\left\{\begin{array}{ll}\deg P,&c=P^k,P\mbox{ prime},k\ge 1,\\
0,&\mbox{otherwise}.\end{array}\right.$$ is the von Mangoldt function. 
Note that $T^{-r}_\chi=\overline{T^r_\chi}$. 

Let $Q\in\field_q[x]$ be a (monic) modulus of degree $m$, and let 
$$G=\{\chi\in\left(\F_q[x]/Q\right)^{\times*}:\chi(\F_q^\times)=1\}$$ be the group of even Dirichlet characters modulo $Q$, which we view as a subgroup of the dual group of $(\field_q[x]/Q)^\times$. For a monic divisor $Q'|Q$, we denote by $G_{Q'}$ the subgroup of even Dirichlet characters with period $Q'$ (i.e. characters $\chi$ modulo $Q$ such that for $(g,Q)=1$ the value $\chi(g)$ depends only on $g\bmod Q'$). Let $H$ be a subgroup of $G$. We denote by $\prgrp{H}$ the set of primitive characters inside $H$. If $X:H\to\C$ is a function, we can express the sum $\sum_{\chi\in H^{\pr}}X(\chi)$ in terms of sums over subgroups of $H$. Denote by $H_{Q'}=H\cap G_{Q'}$ the subgroup of characters in $H$ of period $Q'$. By inclusion-exclusion we have

$$
\sum_{\chi\in H^{\pr}}X(\chi)=\sum_{Q'|Q}\mu(Q/Q')\sum_{\chi\in H_{Q'}}X(\chi),
$$
where $\mu$ is the M\"obius function, and the sum is over monic divisors $Q'$ of $Q$. The last equation allows us to express averages over primitive characters in $H$ in terms of averages over subgroups of $H$:

\begin{equation}\label{eqn_incex_principle}
\mean{X(\chi)}_{\chi\in H^{\pr}}=\sum_{Q'|Q}\mu(Q/Q')\frac{\#H_{Q'}}{\#H^{\pr}}\mean{X(\chi)}_{\chi\in H_{Q'}}.
\end{equation}

For integers $r,s$ we denote the means of traces and trace products over $\prgrp H$ by
\begin{equation}\label{eq:def_mr}M_H^r=\mean{T_{\chi}^r}_{\chi\in \prgrp{H}}, \end{equation}
\begin{equation}\label{eq:def_mrs}M_H^{r,s}=\mean{T_{\chi}^r T_{\chi}^s}_{\chi\in \prgrp{H}}.\end{equation}
For a subgroup $N\subset(\field_q[x]/Q)^\times$, we let $\psi(s;N)$ denote the following sum

\begin{equation}\label{eq:chebychev}
\psi(s;N)=\sum_{\deg c=r\atop{\mathrm{monic}\atop{c\bmod Q\in N}}}\Lambda(c)
\end{equation}
(this is the Chebychev $\psi$ function for the group $N$).

The following lemma is a slight reformulation of \cite{Ent12}*{Proposition 7.4}.

\begin{lem}\label{dirCharacterAvgLemma}
For a subgroup $H\subset (\F_q[x]/Q)^{\times*}$ and $0\neq r\in\Z$ we have
$$
M_H^r=-q^{-|r|/2}\left(1+\sum_{Q'|Q}\mu(Q/Q')\frac{\#H_{Q'}}{\#\prgrp{H}}\psi(|r|;H_{Q'}^\perp)\right),
$$
where $\sum_{Q'|Q}$ denotes summation over monic divisors $Q'$ of $Q$.
In particular $M_H^r=M_H^{-r}$ is always real.
\end{lem}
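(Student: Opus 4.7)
The plan is to start from the explicit formula (\ref{eqn_expressingTracesWithCharacters}) and then unfold the primitive average using the inclusion-exclusion identity (\ref{eqn_incex_principle}) together with the orthogonality relation (\ref{eqn_orthogroupAvg}).

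\textbf{Step 1 (reduction to $r>0$).} First I would handle the case $r>0$ directly and then deduce the $r<0$ case. Since $|\rho_i|=1$, we have $\rho_i^{-1}=\overline{\rho_i}$, hence $T_\chi^{-r}=\overline{T_\chi^r}$. Because $H$ is a subgroup of the character group, it is closed under $\chi\mapsto\bar\chi=\chi^{-1}$, and primitivity is preserved under this involution, so the involution acts as a bijection on $\prgrp H$. Reindexing $\chi\mapsto\bar\chi$ inside the average gives $M_H^{-r}=\overline{M_H^r}=\mean{T_{\bar\chi}^r}_{\chi\in\prgrp H}=\mean{T_\chi^r}_{\chi\in\prgrp H}=M_H^r$, so $M_H^r$ is real and it suffices to prove the formula for $r>0$.

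\textbf{Step 2 (substitute the explicit formula and swap sums).} For $r>0$, plug (\ref{eqn_expressingTracesWithCharacters}) into the definition (\ref{eq:def_mr}) and swap the finite sum over monic $c$ of degree $r$ with the average over $\prgrp H$:
$$M_H^r=-q^{-r/2}-q^{-r/2}\sum_{\deg c=r,\ \mathrm{monic}}\Lambda(c)\,\mean{\chi(c)}_{\chi\in\prgrp H}.$$

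\textbf{Step 3 (unfold primitivity).} Apply (\ref{eqn_incex_principle}) to the inner average:
$$\mean{\chi(c)}_{\chi\in\prgrp H}=\sum_{Q'|Q}\mu(Q/Q')\frac{\#H_{Q'}}{\#\prgrp H}\mean{\chi(c)}_{\chi\in H_{Q'}}.$$
Now the average is over the honest subgroup $H_{Q'}$, so orthogonality (\ref{eqn_orthogroupAvg}) gives $\mean{\chi(c)}_{\chi\in H_{Q'}}=\mathbf{1}[c\bmod Q\in H_{Q'}^\perp]$.

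\textbf{Step 4 (assemble).} Substituting back, the inner double sum becomes
$$\sum_{Q'|Q}\mu(Q/Q')\frac{\#H_{Q'}}{\#\prgrp H}\sum_{\deg c=r,\ \mathrm{monic},\ c\bmod Q\in H_{Q'}^\perp}\Lambda(c)=\sum_{Q'|Q}\mu(Q/Q')\frac{\#H_{Q'}}{\#\prgrp H}\,\psi(r;H_{Q'}^\perp),$$
by the definition (\ref{eq:chebychev}). Multiplying through by $-q^{-r/2}$ yields the claimed identity for $r>0$; combined with Step 1, the formula holds for all $0\neq r\in\Z$ with $|r|$ replacing $r$. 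Realness is transparent from the right-hand side (each $\Lambda$, $\mu$ and cardinality is real) and also matches the Step~1 symmetry.

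This is essentially a bookkeeping proof with no real obstacle; the only subtle point is making sure the primitive-to-subgroup unfolding and the orthogonality step are combined in the right order, and that the $r<0$ case is handled by the group structure of $H$ rather than needing a separate calculation.
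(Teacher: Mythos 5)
Your proof is correct and follows essentially the route the paper intends: the paper defers this lemma to \cite{Ent12}*{Proposition 7.4}, but its own proof of the companion Lemma \ref{lem_dirCharMulTrace} uses exactly your chain of steps --- substitute \eqref{eqn_expressingTracesWithCharacters}, unfold primitivity via \eqref{eqn_incex_principle}, apply orthogonality \eqref{eqn_orthogroupAvg}, and dispose of $r<0$ by conjugation symmetry. No issues.
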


Next we want to express $M_H^{r,s}$ in a similar manner. Let $N$ be a subgroup of $(\field_q[x]/Q)^\times$. For $r,s\in\Z$ we consider the following sums (summation is over monic polynomials):

\begin{equation}\label{eq:def_tau}
\tau(r,s;N)=\left\{\begin{array}{lll}\displaystyle\sum_{\deg h=|r|,\deg g=|s|\atop hg\bmod Q\in N}\Lambda(h)\Lambda(g),& rs\ge 0,\\ \\
\displaystyle\sum_{\deg h=|r|,\deg g=|s|\atop{(gh,Q)=1\atop hg^{-1}\bmod Q\in N}}\Lambda(h)\Lambda(g),
&rs<0\end{array}\right.
\end{equation}
(note that in the case $rs\ge 0$ the summation is automatically over $h,g$ coprime with $Q$ since $hg\bmod Q\in N\subset(\F_q[x]/Q)^\times$).

\begin{lem}\label{lem_dirCharMulTrace}
For a subgroup $H\subset(\F_q[x]/Q)^{\times*}$ and $0\neq r,s\in\Z$, we have
$$
M_H^{r,s}=-q^{-\frac{|r|+|s|}{2}}-q^{-|s|/2}\cdot M_H^r-q^{-|r|/2}\cdot M_H^s+
q^{-\frac{|r|+|s|}{2}}
\sum_{Q'|Q}\mu(Q/Q')\frac{\#H_{Q'}}{\#\prgrp{H}}\tau(r,s;H_{Q'}^\perp),
$$
where the summation is over monic divisors $Q'$ of $Q$.
In particular $M_H^{r,s}=M_H^{-r,-s}$ is always real.
\end{lem}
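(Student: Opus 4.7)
The plan is to mirror the strategy of Lemma \ref{dirCharacterAvgLemma}, applying the trace-to-prime identity (\ref{eqn_expressingTracesWithCharacters}) and M\"obius inversion in the conductor, but now to a product $T^r_\chi T^s_\chi$ rather than a single trace.

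First, combining (\ref{eqn_expressingTracesWithCharacters}) with the symmetry $T^{-r}_\chi=\overline{T^r_\chi}$ gives, for any nonzero integer $r$,
$$T^r_\chi=-q^{-|r|/2}\bigl(1+S_r(\chi)\bigr),\qquad S_r(\chi):=\sum_{\substack{\deg h=|r|\\h\text{ monic}}}\Lambda(h)\,\chi^{\mathrm{sgn}(r)}(h),$$
where $\chi^{-1}$ is the conjugate character $\overline\chi$ (which, like $\chi$, vanishes outside $(\F_q[x]/Q)^\times$). Multiplying,
$$T^r_\chi T^s_\chi=q^{-(|r|+|s|)/2}\bigl(1+S_r(\chi)+S_s(\chi)+S_r(\chi)S_s(\chi)\bigr),$$
with the cross-term
$$S_r(\chi)S_s(\chi)=\sum_{\substack{\deg h=|r|,\,\deg g=|s|\\h,g\text{ monic}}}\Lambda(h)\Lambda(g)\,\chi^{\mathrm{sgn}(r)}(h)\,\chi^{\mathrm{sgn}(s)}(g).$$

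Second, I would take $\mean{\cdot}_{\chi\in\prgrp H}$ termwise. The single-variable averages are read directly off Lemma \ref{dirCharacterAvgLemma}: $\mean{S_r}_{\chi\in\prgrp H}=-1-q^{|r|/2}M_H^r$, and analogously for $S_s$. For the double sum, the inclusion-exclusion identity (\ref{eqn_incex_principle}) combined with orthogonality (\ref{eqn_orthogroupAvg}) replaces the mean of $\chi^{\mathrm{sgn}(r)}(h)\chi^{\mathrm{sgn}(s)}(g)$ by an indicator of membership in $H_{Q'}^\perp$. Since $H_{Q'}^\perp$ is a subgroup of $(\F_q[x]/Q)^\times$ and hence closed under inversion, the resulting constraint reduces to $hg\bmod Q\in H_{Q'}^\perp$ when $rs>0$ and to $hg^{-1}\bmod Q\in H_{Q'}^\perp$ (with automatic coprimality $(gh,Q)=1$) when $rs<0$. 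In either regime the inner sum is precisely $\tau(r,s;H_{Q'}^\perp)$ from (\ref{eq:def_tau}), yielding
$$\mean{S_r S_s}_{\chi\in\prgrp H}=\sum_{Q'|Q}\mu(Q/Q')\frac{\#H_{Q'}}{\#\prgrp H}\,\tau(r,s;H_{Q'}^\perp).$$

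Finally, substituting these three averages into the expansion of $T^r_\chi T^s_\chi$ and collecting terms by power of $q^{-1/2}$ produces the formula stated in the lemma. Reality is then automatic: $M_H^r,M_H^s$ are real by Lemma \ref{dirCharacterAvgLemma}, while $\mu,\#H_{Q'},\#\prgrp H,\tau$ are integer-valued, so the right-hand side is manifestly real; combined with $M_H^{-r,-s}=\overline{M_H^{r,s}}$ this forces $M_H^{r,s}=M_H^{-r,-s}$. No ingredients beyond Lemma \ref{dirCharacterAvgLemma} are required; the only genuine difficulty is the uniform bookkeeping across the four sign patterns of $(r,s)$ and the matching of the $rs<0$ case to the explicit coprimality condition $(gh,Q)=1$ built into (\ref{eq:def_tau}).
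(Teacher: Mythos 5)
Your proof is correct and follows essentially the same route as the paper's: expand $q^{(|r|+|s|)/2}T^r_\chi T^s_\chi=(1+S_r)(1+S_s)$ via \eqref{eqn_expressingTracesWithCharacters}, average termwise, handle the linear terms through Lemma \ref{dirCharacterAvgLemma} and the cross term through \eqref{eqn_incex_principle} together with orthogonality \eqref{eqn_orthogroupAvg}, and dispatch the sign patterns of $(r,s)$ by conjugation. One caveat: your (correct) value $\mean{S_r}_{\chi\in\prgrp{H}}=-1-q^{|r|/2}M_H^r$ actually gives $1+\mean{S_r}+\mean{S_s}=-1-q^{|r|/2}M_H^r-q^{|s|/2}M_H^s$, hence a constant term $-q^{-(|r|+|s|)/2}$ rather than the $+q^{-(|r|+|s|)/2}$ stated in the lemma, so your substitution does not literally ``produce the formula stated''; the paper's own proof makes the same slip at the identical step (writing $1-q^{r/2}M_H^r-q^{s/2}M_H^s$), and the sign is harmless in every later application since that term is absorbed into error terms, but you should not assert agreement with the displayed formula without verifying the constant.
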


\begin{proof}
For $r,s\ge 0$ using \eqref{eqn_expressingTracesWithCharacters} we write
\begin{align*}
q^{\frac{r+s}{2}}M_H^{r,s}&=\mean{q^{r/2}T_\chi^r\cdot q^{s/2}T_\chi^s}_{\chi\in \prgrp{H}}\\
&=\mean{
\left(
\sum_{\deg h=r}\Lambda(h)\chi(h)+1
\right)
\left(
\sum_{\deg g=s}\Lambda(g)\chi(g)+1
\right)}_{\chi\in \prgrp{H}}\\
&=-1-q^{r/2}M_H^r-q^{s/2}M_H^s+\sum_{\deg h=r,\deg g=s}\Lambda(h)\Lambda(g)\mean{\chi(hg)}_{\chi\in \prgrp{H}}.
\end{align*}

By \eqref{eqn_incex_principle} we have
$$
\mean{\chi(hg)}_{\chi\in\prgrp{H}}=\sum_{Q'|Q}\mu(Q/Q')\frac{\#H_{Q'}}{\#\prgrp{H}}\mean{\chi(hg)}_{\chi\in H_{Q'}}
$$
and by the orthogonality relation \eqref{eqn_orthogroupAvg} we have $\mean{\chi(hg)}_{\chi\in H_{Q'}}=\mathds{1}_{hg\bmod Q\in H_{Q'}^\perp}$ (for any relation $\mathcal R$ we denote by $\mathds{1}_\mathcal R$ its indicator, which takes the value 1 if $\mathcal R$ holds and 0 otherwise). Plugging this back in and exchanging the order of summation we get

$$
q^{\frac{r+s}{2}}M_H^{r,s}=-1-q^{r/2}M_H^r-q^{s/2}M_H^s+\sum_{Q'|Q}\mu(Q/Q')\frac{\#H_{Q'}}{\#\prgrp{H}}\tau(r,s;H_{Q'}^\perp).
$$
Dividing by $q^{\frac{r+s}{2}}$ we get the desired result. Note that the right hand side in the last equation is real. The case of $r\le 0,s\le 0$ follows because $M_H^{r,s}=\overline{M_H^{-r,-s}}=M_H^{-r,-s}$. 
The case $r\ge 0,s<0$ is treated similarly, only the expression $\mean{\chi(hg)}_{\chi\in H_{Q'}}$ is replaced with $\mean{\chi(h)\overline{\chi(g)}}=\mean{\chi(h/g)}$ resulting in the stated expression.
\end{proof}

\section{Counting short vectors in $\protect\field_{q}[x]$-lattices}
\label{sec:lattice}

A fundamental tool in all of our estimates of the trace means $M_H^r,M_H^{r,s}$ (defined in section \ref{sec:avtraces}) for the specific subgroups $H$ we encounter, will be bounding the number of short vectors in certain $\F_q[x]$-lattices. In the present section we summarize the background we will need, mainly the basic theory of reduced bases for $\F_q[x]$-lattices developed by Lenstra \cite{Len85}, and derive some useful bounds on the number of short vectors in a lattice. Analogous (and even more general) bounds are also known over the integers, see e.g. \cite{Kat94}*{\S 2},\cite{Sch68}*{\S 5}. 

In what follows an \emph{$\field_{q}[x]$-lattice $\Gamma$ of rank $n$} is an $\mathbb{F}_{q}[x]$-submodule
of $\mathbb{F}_{q}[x]^n$ of finite index. The volume of $\Gamma$ is defined to be the index $\vol(\Gamma)=[\field_{q}[x]^n:\Gamma]$.
 For a vector $v=(v_{1},\ldots,v_n)\in\field_{q}[x]^n$,
the degree of $v$ is defined by $\deg v=\max(\deg v_1,\ldots,\deg v_n )$. We say that $v$ is
primitive in $\Gamma$ if one cannot write $v=aw,w\in\Gamma,a\in\F_q[x]$ with $\deg a\ge 1$.

\begin{theorem}[Lenstra]\label{thm:lenstra_basis_reduction}
Let $\Gamma\subset\F_q[x]^n$ be a lattice of rank $n$ and volume $\vol(\Gamma)=q^m$. Then there exists a basis $b_1,\ldots,b_n$ for $\Gamma$ (as an $\F_q[x]$-module) such that $\deg b_1+\ldots+\deg b_n=m$. For such a basis and any $v=\sum_{i=1}^n {c_i b_i},c_i\in\F_q[x]$ we have $\deg v=\max{\deg{c_i b_i}}$. 
\end{theorem}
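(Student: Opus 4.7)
The plan is to follow the classical basis reduction argument, which over $\F_q[x]$ is particularly clean because the degree function behaves like a non-archimedean valuation and $\F_q[x]$ is a PID. Starting from any basis $b_1,\dots,b_n$ of $\Gamma$ (which exists because $\Gamma$ is a finite-index submodule of the free module $\F_q[x]^n$ over a PID), I would first establish the inequality $\sum_{i=1}^n \deg b_i\ge m$ for every basis: letting $B$ be the $n\times n$ matrix whose rows are $b_1,\dots,b_n$, the determinant $\det B$ generates the index ideal, so $\deg\det B=m$, and each Leibniz summand has degree at most $\sum_i\deg b_i$.

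Next I would call a basis \emph{reduced} if the leading coefficient vectors $b_i^{\mathrm{top}}\in\F_q^n$ (the coefficient of $x^{\deg b_i}$ in $b_i$) are $\F_q$-linearly independent. The heart of the argument is a reduction step. If the basis is not reduced, pick a nontrivial relation $\sum_i \alpha_i b_i^{\mathrm{top}}=0$, set $D=\max\{\deg b_i:\alpha_i\neq 0\}$, and form
$$v=\sum_{i}\alpha_i\, x^{D-\deg b_i}\,b_i.$$
By construction the degree-$D$ component of $v$ vanishes, hence $\deg v<D$. Choose any index $j$ with $\deg b_j=D$ and $\alpha_j\neq 0$ and replace $b_j$ by $v$. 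The transformation matrix differs from the identity only in its $j$-th row, whose diagonal entry is $\alpha_j\in\F_q^\times$, so it has unit determinant and the result is still a basis of $\Gamma$, but with $\sum_i \deg b_i$ strictly smaller. Since this quantity is a nonnegative integer, the procedure terminates at a reduced basis.

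Finally I would deduce both conclusions of the theorem from reducedness. For the second assertion, suppose $v=\sum_i c_i b_i$ and let $D=\max_i \deg(c_i b_i)$ with $S=\{i:\deg(c_i b_i)=D\}$. The coefficient of $x^D$ in $v$ equals
$$\sum_{i\in S}\mathrm{LC}(c_i)\,b_i^{\mathrm{top}},$$
where each $\mathrm{LC}(c_i)\in\F_q^\times$ because $i\in S$ forces $c_i\neq 0$. Linear independence of the $b_i^{\mathrm{top}}$ makes this combination nonzero, so $\deg v=D=\max_i \deg(c_i b_i)$. Applying this observation to the Leibniz expansion of $\det B$ (or equivalently noting that the top-degree part of $\det B$ is the nonzero scalar $\det(b_i^{\mathrm{top}})$ times $\prod_i x^{\deg b_i}$) shows $\deg\det B=\sum_i\deg b_i$, so $\sum_i\deg b_i=m$ for the reduced basis.

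The main technical point, and the one to get right, is the reduction step: one must simultaneously engineer a basis change that is invertible over $\F_q[x]$ (a unit determinant in $\F_q^\times$, not merely nonzero in $\F_q[x]$) and that strictly decreases the total degree. Once that step is set up correctly, termination is trivial and the rest is a direct consequence of the non-archimedean behavior of $\deg$.
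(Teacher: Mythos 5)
Your proof is correct; the paper itself offers no argument here, simply citing Lenstra's Propositions 1.1 and 1.14, and your reduction-to-a-reduced-basis argument is essentially the standard proof from that reference. The only point left implicit is that the second assertion is claimed for \emph{any} basis with $\deg b_1+\cdots+\deg b_n=m$, not just the one your algorithm produces; this follows at once from what you have, since your reduction step together with the lower bound $\sum_i\deg b_i\ge m$ shows that every basis attaining the value $m$ is already reduced.
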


\begin{proof}
The first assertion is proved in \cite[Proposition 1.14]{Len85}. The second assertion is \cite[Proposition 1.1]{Len85}.
\end{proof}

The next lemma allows us to bound the number of short primitive vectors in a rank 2 lattice.

\begin{lem}\label{primitiveVectorsBoundLemma}
Let $\Gamma\subset\F_q[x]^2$ be a lattice of rank $2$, $\vol(\Gamma)=q^{m}$, $r\ge 0$ a nonnegative integer.
Then the number of primitive $v\in\Gamma$ with $\deg v \le r$ is
at most $\max(q,q^{2r-m+2})$.
\end{lem}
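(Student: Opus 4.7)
The plan is to apply Lenstra's reduction (Theorem \ref{thm:lenstra_basis_reduction}) to obtain a basis $b_1, b_2$ of $\Gamma$ with $\deg b_1 + \deg b_2 = m$, which we may assume is arranged so that $\deg b_1 \le \deg b_2$. Any vector $v\in\Gamma$ can then be written uniquely as $v = c_1 b_1 + c_2 b_2$ with $c_i \in \F_q[x]$, and the second part of Lenstra's theorem gives
$$\deg v = \max(\deg c_1 + \deg b_1,\ \deg c_2 + \deg b_2).$$
So the constraint $\deg v \le r$ is equivalent to $\deg c_i \le r - \deg b_i$ for $i = 1, 2$.

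The proof then splits into cases according to the position of $r$ relative to $\deg b_1 \le \deg b_2$. If $r < \deg b_1$, both conditions force $c_1 = c_2 = 0$, so the only vector of degree $\le r$ is zero, contributing nothing. If $\deg b_1 \le r < \deg b_2$, we must have $c_2 = 0$, so $v = c_1 b_1$. The key sub-claim here is that $b_1$ is primitive in $\Gamma$: if $b_1 = a w$ with $w = \alpha b_1 + \beta b_2 \in \Gamma$, then comparing coefficients in the basis forces $a \alpha = 1$, so $\deg a = 0$. Consequently $v = c_1 b_1$ is primitive iff $c_1 \in \F_q^\times$, giving at most $q - 1 \le q$ primitive vectors. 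Finally, if $r \ge \deg b_2$, the number of allowed pairs $(c_1, c_2)$ is at most
$$q^{r - \deg b_1 + 1} \cdot q^{r - \deg b_2 + 1} = q^{2r - m + 2},$$
which trivially bounds the number of primitive vectors of degree $\le r$.

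Combining the three cases yields the bound $\max(q, q^{2r-m+2})$. I do not anticipate a serious obstacle: the only subtle point is verifying primitivity of $b_1$ (and hence correctly counting primitive multiples of $b_1$) in the middle case, which falls out immediately from the definition of a basis. The volume-plus-degree relation from Lenstra's theorem does all the heavy lifting.
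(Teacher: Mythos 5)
Your proof is correct and follows essentially the same route as the paper's: Lenstra's reduced basis, the same case split on $r$ versus the degrees of $b_1,b_2$, and the same counting (the paper simply notes that $v=c_1b_1$ with $\deg c_1\ge 1$ is non-primitive by definition, so your extra verification that $b_1$ itself is primitive is harmless but not needed for the upper bound).
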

\begin{proof}

Let $b_1,b_2$ be a basis of $\Gamma$ as given by Theorem \ref{thm:lenstra_basis_reduction} and  denote $\deg b_{i}=m_{i}$ with $m_1+m_2=m$. Let $v=c_{1}b_{1}+c_{2}b_{2}$ be a vector with $\deg v\le r$. If
$r<m_{2}$ then since $r\ge \deg(v)=\max(\deg(c_{1}b_{1}),\deg(c_2b_2))$,
we must have $c_{2}=0$ and thus $v=c_{1}b_{1}$. If $\deg c_{1}\ge1$,
$v$ is not primitive, so there are at most $q$ possible choices of $v$. The same argument applies if $r<m_1$. If
$r\ge m_1,m_2$ then by Theorem \ref{thm:lenstra_basis_reduction} a vector $v=c_1b_1+c_2b_2\in\Gamma$ (not necessarily
primitive) of degree at most $r$ satisfies $\deg c_{i}\le r-m_{i}$. Thus the
number of vectors $v\in\Gamma$ of degree at most $r$ is bounded by $q^{2r-m_{1}-m_{2}+2}=q^{2r-m+2}$.
\end{proof}

We will also need a slight generalization:

\begin{lem}\label{primitiveVectorsBoundLemma2}
Let $\Gamma\subset\F_q[x]^2$ be a lattice of rank $2$, $\vol(\Gamma)=q^{m}$, $r,s\ge 0$ nonnegative integers.
Then the number of primitive $v=(g,h)\in\Gamma,g,h\in\F_q[x]$ with $\deg g\le r$,$\deg h\le s$ is
at most $\max(q,q^{r+s-m+2})$.
\end{lem}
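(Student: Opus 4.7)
The plan is to adapt the proof of Lemma \ref{primitiveVectorsBoundLemma} by replacing the uniform degree with a weighted one that encodes the asymmetric coordinate bounds. Let $K = \F_q((x^{-1}))$ and define
\[
\deg_w(v_1, v_2) = \max(\deg v_1 - r,\ \deg v_2 - s)
\]
on $K^2$, so that the constraint $\deg g \le r$, $\deg h \le s$ becomes $\deg_w(g,h) \le 0$. Lenstra's basis reduction applies in this generality; the cleanest way to see this is to pass to the scaled lattice $\tilde\Gamma = \{(x^{-r}g, x^{-s}h) : (g,h) \in \Gamma\} \subset K^2$, which is a rank 2 free $\F_q[x]$-module, and invoke the $K^n$-version of Theorem \ref{thm:lenstra_basis_reduction}. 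The output is a basis $b_1, b_2$ of $\Gamma$ such that every $v = c_1 b_1 + c_2 b_2$ satisfies $\deg_w v = \max_i(\deg c_i + n_i)$, where $n_i := \deg_w b_i$.

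Next I would establish the volume inequality $n_1 + n_2 \ge m - r - s$. Writing $b_i = (b_{i1}, b_{i2})$, the definition of $\deg_w$ gives $\deg b_{i1} \le n_i + r$ and $\deg b_{i2} \le n_i + s$, so each term of the determinant $\det(b_1, b_2) = b_{11}b_{22} - b_{12}b_{21}$ has degree at most $n_1 + n_2 + r + s$. Since $\vol(\Gamma) = q^m$ forces $\deg \det(b_1, b_2) = m$, the bound on $n_1+n_2$ follows.

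A primitive $v = c_1 b_1 + c_2 b_2$ with $\deg_w v \le 0$ satisfies $\deg c_i \le -n_i$ by the reduced-basis property. I finish with a case analysis on the signs of $n_1, n_2$: if both $n_i > 0$, then $v = 0$; if (say) $n_2 > 0 \ge n_1$, then $c_2 = 0$ and $v = c_1 b_1$, and primitivity forces $c_1 \in \F_q^\times$ (any element of an $\F_q[x]$-basis of $\Gamma$ is primitive in $\Gamma$, as in the proof of Lemma \ref{primitiveVectorsBoundLemma}), contributing at most $q - 1$ vectors; if $n_1, n_2 \le 0$, the total count is at most $q^{(-n_1+1)+(-n_2+1)} = q^{2-n_1-n_2} \le q^{r+s-m+2}$, using the volume inequality. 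In all cases the bound is $\max(q, q^{r+s-m+2})$, as required. The only substantive technical point is justifying the weighted/scaled Lenstra reduction in the first step; the remainder is a direct generalization of the symmetric argument.
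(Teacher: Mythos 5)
Your proof is correct and rests on the same idea as the paper's: rescale the coordinates to turn the asymmetric box constraint $\deg g\le r$, $\deg h\le s$ into a single degree bound and then count via a Lenstra reduced basis; the paper does this more economically by mapping $(g,h)\mapsto(g,hx^{r-s})$ (assuming $r\ge s$), which keeps the lattice inside $\F_q[x]^2$ and reduces the statement directly to Lemma \ref{primitiveVectorsBoundLemma} rather than re-running the case analysis. The one place you lean on something the paper does not state is the ``$K^2$-version'' of Theorem \ref{thm:lenstra_basis_reduction} for the fractionally scaled lattice $\tilde\Gamma\subset K^2$; this is a true and standard extension of Lenstra's results, but you can avoid it entirely by clearing denominators (multiply your $\tilde\Gamma$ by $x^{\max(r,s)}$), which recovers the paper's argument.
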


\begin{proof} Assume by way of symmetry that $r\ge s$. Consider the modified lattice
$$\Gamma'=\{(g,hx^{r-s}):(g,h)\in\Gamma\}.$$ We have $\vol(\Gamma)=q^{m+r-s}$ and applying Lemma \ref{primitiveVectorsBoundLemma} to $\Gamma'$ we see that there are at most $\max(q,q^{r+s-m+2})$ primitive vectors $(g,h)$ in $\Gamma$ with $\deg((g,hx^{r-s}))\le r$, which is equivalent to the assertion.\end{proof}

\begin{lem} \label{lem:lin_eqn_lattice_vol}
\begin{enumerate}\item[(i)] Let $0\neq Q\in\F_q[x]$ be a polynomial and let $a\in\F_q[x]$ be a polynomial coprime with $Q$. Then 
$$\Gamma=\{(g,h)\in\F_q[x]^2:h\equiv ag\pmod Q\}$$
is a lattice of volume $q^{\deg Q}$. 
\item[(ii)] Let $r,s\ge 0$ be integers such that $r+s\ge\deg Q-1$. Then there exists a solution $(g,h)$ to the congruence $h\equiv ag\pmod Q$ with $(h,g)\not\equiv(0,0)\pmod Q$ and $\deg g\le r,\deg h\le s$.
\end{enumerate}
\end{lem}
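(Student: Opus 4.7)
The plan is to realize $\Gamma$ as the kernel of an explicit $\F_q[x]$-module homomorphism and then deduce (ii) by counting $\F_q$-dimensions (the hypothesis $r+s\ge\deg Q-1$ is the natural one in such a count; the appearance of ``$\deg g$'' in the statement is evidently a typo for ``$\deg Q$'').

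For (i), I would consider the $\F_q[x]$-linear map
$$\phi:\F_q[x]^2\to\F_q[x]/Q,\qquad \phi(g,h)=(h-ag)\bmod Q,$$
which is surjective (take $g=0$) and has kernel exactly $\Gamma$. Hence $\F_q[x]^2/\Gamma\cong\F_q[x]/Q$ and $\vol(\Gamma)=q^{\deg Q}$; note that the coprimality of $a$ with $Q$ plays no role at this stage.

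For (ii), I would restrict $\phi$ to the $\F_q$-subspace
$$V=\{(g,h)\in\F_q[x]^2:\deg g\le r,\ \deg h\le s\}$$
of $\F_q$-dimension $r+s+2$. The hypothesis forces $\dim_{\F_q}V>\dim_{\F_q}\F_q[x]/Q=\deg Q$, so the kernel $\ker(\phi|_V)=V\cap\Gamma$ has positive $\F_q$-dimension and therefore contains a nonzero pair $(g,h)$, which already satisfies both the congruence and the degree bounds.

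The slightly more delicate point, and the main obstacle of the proof, is to ensure $(g,h)\not\equiv(0,0)\pmod Q$. For this I would compare $\ker(\phi|_V)$ with the ``trivial'' subspace
$$V_0=\{(g,h)\in V:Q\mid g\text{ and }Q\mid h\}\subseteq\ker(\phi|_V),$$
whose $\F_q$-dimension equals $\max(r-\deg Q+1,0)+\max(s-\deg Q+1,0)$. A short case analysis according to whether each of $r,s$ lies below or at least at $\deg Q$ shows $\dim\ker(\phi|_V)-\dim V_0\ge 1$ in every regime, so $\ker(\phi|_V)\setminus V_0$ is nonempty and yields the desired pair $(g,h)$. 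Without this separation the crude dimension count would only preclude $(g,h)=(0,0)$, leaving open the possibility of a trivial solution $(Qg',Qh')$ with both coordinates divisible by $Q$.
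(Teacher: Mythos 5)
Your proof is correct, and for part (ii) it is essentially the paper's argument (a dimension count on the space $V$ of pairs with $\deg g\le r$, $\deg h\le s$ mapped into $\F_q[x]/Q$); you are also right that ``$\deg g-1$'' in the statement should read ``$\deg Q-1$''. Two points of comparison are worth recording. For (i), the paper instead exhibits the explicit basis $(1,a),(0,Q)$ of $\Gamma$ and invokes Lenstra's determinant formula $\vol=q^{\deg(\det A)}$; your first-isomorphism-theorem computation $\F_q[x]^2/\Gamma\cong\F_q[x]/Q$ is equivalent, self-contained, and (as you note) shows the coprimality of $a$ and $Q$ is irrelevant here. For (ii), your extra step comparing $\ker(\phi|_V)$ with the subspace $V_0$ of pairs with both coordinates divisible by $Q$ is a genuine refinement: the paper's own proof stops at ``the kernel contains a nonzero vector,'' which does not by itself rule out a solution of the form $(Qg',Qh')$ when $r$ or $s$ is at least $\deg Q$, so your case analysis is what actually delivers the condition $(g,h)\not\equiv(0,0)\pmod Q$ as literally stated. (In the one place the lemma is applied, in the proof of Proposition \ref{prop_nonChebotarevBound}, only a nonzero vector is needed, so the paper's shortcut is harmless there, but your version proves the stated lemma in full.)
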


\begin{proof}
{\bf (i)} It is easy to see that $\Gamma$ is spanned by the vectors $(1, a),(0,Q)$. It is shown in \cite[\S1]{Len85} that the volume the lattice in $\F_q[x]^n$ spanned by the rows of a matrix $A\in\mathrm{M}_{n\times n}(\F_q[x])$ is given by $q^{\deg(\det A)}$ (provided $\det A\neq 0$), which equals $q^{\deg Q}$ in our case. 

{\bf (ii)} Denote $V=\{(g,h)\in\F_q[x],\deg g\le r,\deg h\le s\}$. This is an $\F_q$-vector space of dimension $r+s+2$. The map $V\to\F_q[x]/Q:(g,h)\mapsto h-ag\bmod Q$ cannot be injective (since its range has smaller dimension than its domain) and so has a nonzero vector in its kernel, giving the desired solution.
\end{proof}

The following bound will be used in section \ref{sec:ord_as_fam}.

\begin{lem}\label{lem:bound_short_vectors} Let $\Gamma\subset\F_q[x]^n$ be a lattice with volume $\vol(\Gamma)=q^m$ and denote $\mu=\min_{0\neq v\in\Gamma}\deg v$. Then for any real $s\ge 0$,
$$\#\{v\in\Gamma:\deg v\le s\}\le q^{\max(0,n(s+1)-m,(n-1)(s+1-\mu))}.$$
\end{lem}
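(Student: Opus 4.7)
The plan is to pick a reduced basis for $\Gamma$ via Theorem~\ref{thm:lenstra_basis_reduction}, giving $b_1,\ldots,b_n$ with $\sum_{i=1}^n m_i=m$ where $m_i := \deg b_i$ and the crucial identity $\deg(\sum c_i b_i) = \max_i(\deg c_i + m_i)$ for every expansion. After reordering so that $m_1 \le m_2 \le \cdots \le m_n$, the vector $b_1$ is the minimum-degree nonzero element of $\Gamma$ (since any other combination has degree at least $m_1$ by the identity), so $m_1 = \mu$.

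The key step is to translate the inequality $\deg v \le s$ for $v=\sum c_i b_i$ into the coordinate-wise conditions $\deg c_i \le s - m_i$. This forces $c_i=0$ whenever $m_i>s$, and otherwise leaves at most $q^{\lfloor s\rfloor - m_i + 1}\le q^{s+1-m_i}$ choices for $c_i\in\F_q[x]$. Writing $k$ for the number of indices with $m_i\le s$, multiplying these counts yields
\[\#\{v\in\Gamma:\deg v\le s\} \;\le\; q^{\,k(s+1)\,-\,\sum_{i=1}^k m_i}.\]

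It then remains to bound this exponent by the maximum in the statement via a short case split on $k$. When $k=0$ the exponent is $0$; when $k=n$ we have $\sum_{i=1}^n m_i = m$ and the exponent equals $n(s+1)-m$; and when $1\le k\le n-1$ the ordering gives $m_i\ge m_1 = \mu$, so $\sum_{i=1}^k m_i\ge k\mu$ and the exponent is at most $k(s+1-\mu)\le(n-1)(s+1-\mu)$. The only subtle point is to confirm the inequality $k(s+1-\mu)\le(n-1)(s+1-\mu)$ in the last case, which requires $s+1-\mu\ge 0$; but this is automatic, since $k\ge 1$ forces $\mu = m_1\le s$. No deeper obstacle arises — essentially all the content is packaged in Lenstra's theorem, and the remaining argument is elementary counting.
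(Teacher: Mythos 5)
Your proof is correct and follows essentially the same route as the paper: take a Lenstra reduced basis, use $\deg(\sum c_ib_i)=\max_i(\deg c_i+\deg b_i)$ to count coefficients, and split according to how many basis vectors have degree $\le s$. The paper merges your cases $k=0$ and $1\le k\le n-1$ into one (using $\deg b_i\ge\mu$ directly rather than ordering the basis), but the content is identical.
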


\begin{proof} Let $b_1,\ldots,b_n$ be a basis for $\Gamma$ as in Theorem \ref{thm:lenstra_basis_reduction}, so each $v\in\Gamma$ can be written uniquely as $v=\sum_{i=1}^nc_ib_i$ with $\deg v=\max\deg c_ib_i$. First assume that $\max\deg b_i\le s$. Then for $v$ as above with $\deg v\le s$ we must have $\deg c_i\le s-\deg b_i$ and we have $q^{s-\deg b_i+1}$ possible values for $c_i$ and $q^{n(s+1)-\sum b_i}=q^{n(s+1)-m}$ possible values of $v$ in total.

On the other hand if $\deg b_i>s$ for some $i$, say $\deg b_n>s$, then for $v$ as above with $\deg v\le s$ we must have $c_n=0$ and $\deg c_i\le s-\deg b_i\le s-\mu$ for $1\le i\le n-1$. It follows that there are $\max(1,q^{(n-1)(s+1-\mu)})$ possible values of $v$.
\end{proof}

\section{2-level density for the polynomial A-S family}\label{sec_full_as_family}

In the present section we prove Theorem \ref{thm:main1}.
Throughout the present section $p>2$ is prime, $q$ is a power of $p$, $\psi:\F_p\to\C^\times$ a nontrivial additive character. Let $d$ be a natural number with $(d,p)=1$. Until section \ref{sec:2levelfinish} the asymptotic $O(\cdot)$ notation will always have an absolute implicit constant (independent of $q,d$). 

Let $L(u,f,\psi)$ be an A-S $L$-function (not necessarily polynomial) with factorization 
$$L(u,f,\psi)=\prod_{j=1}^{2\g/(p-1)}(1-q^{-1/2}\rho_ju),\g=\g(C_f).$$ For $r\in\Z$ define the \emph{$r$-th trace} of $L(u,f,\psi)$ to be
\begin{equation}\label{eq:trace_of_as} T_f^r=\sum_{j=1}^{2\g/(p-1)}\rho_j^r\end{equation}
($\psi$ is implicit in the notation).
Let $\mathcal F_d\subset\AS_d^0$ be the subfamily defined by (\ref{eq:fd}). We will prove Theorem \ref{thm:main1} through estimating the trace-product averages $$M_d^{r,s}=\mean{T_f^r T_f^s}_{\mathcal F_d}.$$

An estimate of $M_d^{r,s}$ was given in \cite{Ent12}*{Proposition 6.3}, but only in the range $|r|+|s|<d$. In the present section we give an estimate which is nontrivial in the wider range $|r|+|s|<(2-2/p)d$. This will allow us to obtain the improved range in Theorem \ref{thm:main1}.
 
By Proposition \ref{prop:astodir} we have $M_d^{r,s}=M_H^{r,s}$, where 
$$H=\left\{\chi\in\left(\F_q[x]/x^{d+1}\right)^{\times*}:\chi^p=1\right\}$$
and $M_H^{r,s}$ is given by (\ref{eq:def_mrs}).
By Lemma \ref{lem_dirCharMulTrace} and the fact that $\#H_{x^{d+1}}/\#\prgrp{H}=\frac{q}{q-1}$, $\#H_{x^d}/\#\prgrp{H}=\frac{1}{q-1}$ (here we use $(d,p)=1$) we have
\begin{equation}\label{eq:mrsviatau}
 M_H^{r,s}=-q^{-\frac{|r|+|s|}{2}}-q^{-|s|/2}M_H^r-q^{-|r|/2}M_H^s+q^{-\frac{|r|+|s|}{2}}\bigg[
\frac{q}{q-1}\tau(r,s;H_{x^{d+1}}^\perp)-
\frac{1}{q-1}\tau(r,s;H_{x^{d}}^\perp)
\bigg]. 
\end{equation}

For the rest of this section we let $Q$ denote $x^d$ or $x^{d+1}$. We need to describe the orthogonal group to $H_{Q}$. 
\begin{lem}\label{secondOrthoGrpRepLemma}
A polynomial $u(x)\in\field_{q}[x]$, $u(0)\ne0$ satisfies $u\bmod Q\in H_{Q}^{\perp}$
if and only if $u\equiv\varphi(x^{p})\pmod{Q}$ for some $\varphi\in\F_q[x]$ with $\varphi(0)\neq 0,\deg\varphi\le d/p$. 
\end{lem}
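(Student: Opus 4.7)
The strategy is to identify $H_Q$ with the full $p$-torsion subgroup of the character group of $G_Q := (\F_q[x]/Q)^\times$, deduce by duality for finite abelian groups that $H_Q^\perp = G_Q^p$, and then describe the subgroup of $p$-th powers explicitly using the Frobenius.

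First, a character $\chi$ of $(\F_q[x]/x^{d+1})^\times$ has period dividing $Q$ exactly when it factors through the natural surjection $(\F_q[x]/x^{d+1})^\times \twoheadrightarrow G_Q$, so $H_Q$ identifies with the full $p$-torsion $\widehat{G_Q}[p]$. Under the perfect pairing $G_Q \times \widehat{G_Q} \to \C^\times$, the annihilator of $\widehat{G_Q}[p]$ in $G_Q$ is precisely $G_Q^p$: indeed $\chi^p = 1$ iff $\chi$ is trivial on $G_Q^p$, so $\widehat{G_Q}[p]$ corresponds to characters of the quotient $G_Q/G_Q^p$, and the annihilator of $\widehat{G_Q/G_Q^p} \subset \widehat{G_Q}$ equals the kernel of the quotient map $G_Q \to G_Q/G_Q^p$. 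Hence $H_Q^\perp = G_Q^p$.

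Next I compute $G_Q^p$ explicitly. In characteristic $p$ the Frobenius identity $\bigl(\sum_i a_i x^i\bigr)^p = \sum_i a_i^p x^{pi}$ shows every $p$-th power is (the reduction mod $Q$ of) a polynomial in $x^p$. Conversely, given $\varphi(y) = \sum_i b_i y^i \in \F_q[y]$, the perfectness of $\F_q$ lets me write $b_i = c_i^p$, whence $\varphi(x^p) = \bigl(\sum_i c_i x^i\bigr)^p$ is a $p$-th power in $\F_q[x]$. Moreover $u \equiv \varphi(x^p) \pmod Q$ forces $u(0) = \varphi(0)$, so the unit condition $u(0) \neq 0$ is equivalent to $\varphi(0) \neq 0$. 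This yields the claimed characterization modulo the degree bound.

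Finally, for the degree constraint: $\varphi(x^p) \equiv 0 \pmod Q$ with $Q \in \{x^d, x^{d+1}\}$ holds iff every monomial $x^{pk}$ appearing in $\varphi(x^p)$ satisfies $pk \geq \deg Q$, i.e., $k \ge \lceil \deg Q/p \rceil$. The coprimality hypothesis $(d,p)=1$ yields $\lceil d/p \rceil = \lceil (d+1)/p \rceil = \lfloor d/p \rfloor + 1$, so in both cases $Q = x^d$ and $Q = x^{d+1}$ each coset of $G_Q^p$ admits a unique representative of the form $\varphi(x^p)$ with $\deg \varphi \le \lfloor d/p \rfloor \le d/p$. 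The only real technical point is this uniform handling of the two choices of $Q$, which rests squarely on the coprimality assumption.
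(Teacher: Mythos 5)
Your proposal is correct and follows essentially the same route as the paper: identify $H_Q$ with the $p$-torsion of the character group of $(\F_q[x]/Q)^\times$, conclude by duality that $H_Q^\perp$ is the subgroup of $p$-th powers, identify $p$-th powers in $\F_q[x]$ with $\F_q[x^p]$ via Frobenius and perfectness of $\F_q$, and use $(d,p)=1$ to truncate to $\deg\varphi\le d/p$. You simply spell out the duality and degree-bound steps that the paper's two-line proof leaves implicit.
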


\begin{proof} The group $H_Q$ consists of all degree $p$ characters of $\left(\F_q[x]/Q\right)^\times$ and therefore $H_Q^\perp=\left(\left(\F_q[x]/Q\right)^\times\right)^p$. Since $Q$ is $x^d$ or $x^{d+1}$ and the $p$-th powers in $\F_q[x]$ are exactly $\F_q[x^p]$. Noting that $(d,p)=1$ the assertion follows.\end{proof}

We will need the estimate of $$M_d^r=\langle T_f^r\rangle_{f\in\mathcal F_d}=M_H^r$$ given in \cite{Ent12}*{Theorem 1}:

\begin{prop}\label{prop_AlexeiResult}
$M_d^r=O\left(q^{(1/p-1/2)r}+rq^{r/2-(1-1/p)d}\right)$ (for $r>0$).
\end{prop}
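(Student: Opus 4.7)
The plan is to apply Lemma \ref{dirCharacterAvgLemma} with $Q = x^{d+1}$ and bound the resulting sums $\psi(r; H_Q^\perp)$ for $Q \in \{x^d, x^{d+1}\}$ by a direct coefficient count. Since $(d,p)=1$, the only monic divisors $Q' \mid x^{d+1}$ contributing a nonzero M\"obius term are $Q' = x^{d+1}$ and $Q' = x^d$, and the ratios $\#H_{Q'}/\#\prgrp{H}$ equal $q/(q-1)$ and $1/(q-1)$ respectively, so the whole task reduces to estimating $\psi(r; H_Q^\perp)$ for those two moduli. By Lemma \ref{secondOrthoGrpRepLemma}, a monic $c \in \F_q[x]$ of degree $r$ with $c(0) \neq 0$ represents an element of $H_Q^\perp$ precisely when $c \equiv \varphi(x^p) \pmod{Q}$ for some $\varphi \in \F_q[x]$ with $\varphi(0) \neq 0$, equivalently when the coefficient of $x^i$ in $c$ vanishes for every $0 < i < \deg Q$ with $p \nmid i$.

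I would then split the estimate into a short and a long regime. In the short regime $r < \deg Q$ the congruence becomes the equality $c = \varphi(x^p)$, which forces $p \mid r$ and $c = c_1^p$ for a unique monic $c_1 \in \F_q[x]$ of degree $r/p$ (using that $\F_q$ is perfect). Prime-power factorizations of $c$ and $c_1$ match under this correspondence, so $\Lambda(c) = \Lambda(c_1)$, and the prime polynomial theorem gives
$$\psi(r; H_Q^\perp) \;\le\; \sum_{\deg c_1 = r/p,\ c_1(0)\neq 0} \Lambda(c_1) \;\le\; q^{r/p}.$$
Substituting back into Lemma \ref{dirCharacterAvgLemma} produces the $O(q^{(1/p - 1/2)r})$ contribution.

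In the long regime $r \ge \deg Q$ I would count admissible $c$ directly: there are $q-1$ choices for the constant coefficient, $\lfloor (d-1)/p \rfloor$ free coefficients at positions $0 < i < d$ with $p \mid i$, $r - d$ free coefficients at positions $d \le i < r$, and the remaining coefficients at positions $0 < i < d$ with $p \nmid i$ are forced to vanish. This gives at most $(q-1)\, q^{r - d + \lfloor (d-1)/p \rfloor} = O(q^{r - (1-1/p)d})$ admissible $c$, and combining with the crude bound $\Lambda(c) \le r$ yields $\psi(r; H_Q^\perp) = O\bigl(rq^{r - (1-1/p)d}\bigr)$. Plugged back into Lemma \ref{dirCharacterAvgLemma} this gives the contribution $O\bigl(rq^{r/2 - (1-1/p)d}\bigr)$. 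Adding the two regimes yields the stated bound. The main technical point is the coefficient count in the long regime, where the vanishing pattern dictated by Lemma \ref{secondOrthoGrpRepLemma} is precisely sharp enough to save a factor of $q^{(1-1/p)d}$; the short-regime bound is essentially automatic once the identity $\Lambda(c_1^p) = \Lambda(c_1)$ is recorded.
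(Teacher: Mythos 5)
Your argument is correct, and it is worth noting that the paper itself does not prove this proposition: it is quoted from \cite{Ent12}*{Theorem 1}, so there is no internal proof to compare against. Your derivation --- reduce to $M_H^r$ via Proposition \ref{prop:astodir}, apply Lemma \ref{dirCharacterAvgLemma} with only the divisors $x^d$ and $x^{d+1}$ surviving, and then bound $\psi(r;H_Q^\perp)$ by the coefficient-vanishing description coming from Lemma \ref{secondOrthoGrpRepLemma} --- is a faithful, self-contained reconstruction of the cited argument, and it is exactly the mechanism the paper deploys for the harder trace-product averages in Section \ref{sec_full_as_family}. The short-regime reduction to $c=c_1^p$ with $\Lambda(c_1^p)=\Lambda(c_1)$ and the long-regime coefficient count are both sound. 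One bookkeeping point deserves care because the theorems are claimed uniformly in $q$: your count $(q-1)q^{r-d+\lfloor(d-1)/p\rfloor}$ is not $O\bigl(q^{r-(1-1/p)d}\bigr)$ with an absolute constant (it can be as large as $q^{r-(1-1/p)d+1-1/p}$). This is harmless here: for $Q'=x^{d+1}$ the analogous count is genuinely at most $q^{r-(1-1/p)d}$, while for $Q'=x^d$ the surplus factor $q^{1-1/p}$ is absorbed by the weight $\#H_{x^d}/\#\prgrp{H}=1/(q-1)$ in the inclusion--exclusion; alternatively one may simply carry a $+1$ in the exponent, which affects none of the applications in the paper (which routinely tolerate such $+O(1)$ shifts, e.g.\ the $+3$ in \eqref{eq:offdiag1}). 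With that accounted for, your two regimes produce precisely the two terms of the stated bound.
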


As a corollary we obtain for $r,s\neq 0$ (by (\ref{eq:mrsviatau}) and the fact that $M_H^r=M_H^{-r}$ which follows from Lemma \ref{dirCharacterAvgLemma}):
\begin{multline}\label{eq:MHrs}
M_H^{r,s}=O\left(q^{\frac{\max(|r|,|s|)}{p}-\frac{|r|+|s|}{2}}+\max(|r|,|s|)q^{\left||r|-|s|\right|/2-d(1-1/p)}\right)+\\ +q^{-\frac{|r|+|s|}{2}}\bigg[
\frac{q}{q-1}\tau(r,s;H_{x^{d+1}}^\perp)-
\frac{1}{q-1}\tau(r,s;H_{x^{d}}^\perp)
\bigg].
\end{multline}
Thus, to estimate $M_H^{r,s}$ we must estimate the sums $\tau(r,s;H_Q^\perp)$. There are two fundamentally different cases: the \emph{diagonal case} $r=-s$ and the \emph{off-diagonal case} $r\neq -s$.

\subsection{Estimating the diagonal terms $M_d^{r,-r}$}

In the present subsection we prove the following estimate for $r>0$:
\begin{equation}\label{prop_diagonalTermBound}
M_d^{r,-r}=M_H^{r,-r}=r+O(r^2q^{r-(1-1/p)d+3}+rq^{-r/2}).
\end{equation}
Note that for any fixed $\epsilon>0$ and $\epsilon d<r<(1-1/p-\epsilon)d$ the error term is small compared with the main term.

Recall that
$$
\tau(r,-r;H_Q^\perp)=\sum_{\deg g=\deg h=r\atop{(gh,x)=1\atop{gh^{-1}\bmod Q\in H_Q^\perp}}}\Lambda(g)\Lambda(h),
$$
the sum being over monic $g,h$.
We separate the pairs $(g,h)\in\field_q[x]^2$ for which the residue $gh^{-1}\pmod{Q}$ is in $H_Q^\perp$ into two types: first there are pairs for which $g=h$ (diagonal pairs), and then there are pairs with $g\ne h$ (off-diagonal pairs). The contribution of diagonal pairs is easy to estimate using the Prime Polynomial Theorem (see below). 

The key observation for bounding the contribution of the off-diagonal pairs $g\neq h$ is that such pairs $(g,h)$ are primitive vectors of degree $r$ in a lattice of the form 
\begin{equation}\label{eq:gammaphi}\Gamma_\varphi=\{(a,b)\in\field_q[x]^2:b(x)\equiv \varphi(x^p)a(x)\pmod{Q}\},\end{equation} with $\varphi\in\F_q[x],\deg\varphi\le d/p$.
Indeed, since we have $hg^{-1}\bmod{Q}\in H_Q^\perp$, by Lemma \ref{secondOrthoGrpRepLemma} we have a congruence $hg^{-1}\equiv \varphi(x^p)\pmod{Q}$ for some $\varphi$ coprime with $x$ with $\deg\varphi\le d/p$. Since $g,h$ are prime powers of the same degree, $g\neq h$ implies $(g,h)=1$, i.e. the vector $(g,h)$ is primitive in $\F_q[x]^2$ and \emph{a fortiori} in $\Gamma_\varphi$.

Denoting the contribution of off-diagonal pairs by
   
$$E(r)=\sum_{\deg g=\deg h=r\atop{(gh,x)=1\atop{ gh^{-1}\bmod Q\in  H_Q^\perp}}}\Lambda(g)\Lambda(h)-\sum_{\deg g=r\atop{g\neq x^r}}\Lambda(g)^2.$$

We see that
$$
E(r)=O(r^2)\cdot\#\{\text{Primitive vectors } (u,v)\text{ in }\Gamma_\varphi\text{ of degree }r\text{, for some } \varphi\in\field_q[x], \deg \varphi\le d/p\}.
$$

Fix $\varphi$ with $(\varphi,x)=1,\deg\varphi\le d/p$. By Lemma \ref{lem:lin_eqn_lattice_vol}, the volume of the lattice $\Gamma_\varphi$ is $q^{\deg Q}$, which is $q^d$ or $q^{d+1}$. By Lemma \ref{primitiveVectorsBoundLemma} the number of primitive vectors in $\Gamma_\varphi$ of degree $r$ is at most $O(q^{2r-d+2})$. Since there are $O(q^{d/p+1})$ possible values of $\varphi$, we have $E(r)=O(r^2 q^{2r-d(1-1/p)+3})$. Thus,

\begin{equation}
\tau(r,-r;H_Q^\perp)=\sum_{\deg g=r}\Lambda(g)^2+O(r^2 q^{2r-d(1-1/p)+3}).
\end{equation}
By the Prime Polynomial Theorem (see e.g. \cite{Ros02}*{Theorem 2.2}) we have 
$$
\sum_{\deg g=r}\Lambda(g)^2=rq^r+O(rq^{r/2}),
$$
and therefore
$$
\tau(r,-r; H_Q^\perp)=r q^r+O(r q^{r/2}+r^2 q^{2r-(1-1/p)d+3}).
$$
Recalling (\ref{eq:MHrs}) we finally obtain
\begin{align*}
M_H^{r,-r}&=O(q^{-(1-1/p)r}+rq^{-(1-1/p)d})
+\\
&+q^{-r}\bigg[\frac{q}{q-1} rq^r-\frac{1}{q-1} rq^r+O(r q^{r/2}+r^2 q^{2r-(1-1/p)d+3})\bigg]\\
&=r+O(r q^{-r/2}+r^2 q^{r-(1-1/p)d+3}),
\end{align*}
which is exactly (\ref{prop_diagonalTermBound}).

\subsection{Bounding the off-diagonal terms $M_d^{r,s},r\neq -s$}

In the present subsection we prove the following estimates, valid for $r,s\neq 0,r\neq -s$:
\begin{equation}\label{eq:offdiag1}M_{d}^{r,s}=M_H^{r,s}= O\left(|rs| q^{(1/p-1/2)(|r|+|s|)}+|rs|q^{\frac{|r|+|s|}{2}-d(1-1/p)+3}\right).\end{equation}
Since $M_d^{r,s}=M_d^{s,r}=M_d^{-r,-s}$ we may assume without loss of generality that $r>0,|r|\ge|s|$. 

\maincase{The case of $M_d^{r,-s}$, $r>s>0, r\ne s$} Lemma \ref{lem_dirCharMulTrace} gives:
\begin{equation}\label{eq:offdiag_Mtau}
M_H^{r,-s}=-q^{-\frac{r+s}{2}}-q^{-s/2} M_H^r-q^{-r/2}M_H^s+q^{-\frac{r+s}{2}}\bigg[
\frac{q}{q-1}\tau(r,-s;H_{x^{d+1}}^\perp)-
\frac{1}{q-1}\tau(r,-s;H_{x^d}^\perp)
\bigg].
\end{equation}
By Proposition \ref{prop_AlexeiResult} we have
\begin{equation}\label{eq:offdiag_linear_est}
M_H^r=M_d^r=O(q^{(1/p-1/2)r}+r q^{r/2-d(1-1/p)}),
\end{equation}
so it remains to bound the contribution of the sums $\tau(r,-s;H_Q^\perp)$ for $Q=x^d,x^{d+1}$.
By (\ref{eq:def_tau}) and since $Q$ is a power of $x$ we have
$$
\tau(r,-s;H_Q^\perp)=\sum_{\deg h=r,\deg g=s\atop{(gh,x)=1\atop hg^{-1}\bmod Q\in H_Q^\perp}}\Lambda(h)\Lambda(g).
$$
We treat coprime pairs with $\gcd(h,g)=1$ and non-coprime pairs $\gcd(h,g)\ne 1$ separately.

\case{Coprime pairs $\gcd(g,h)=1$} Let $g,h$ be a coprime pair as above, that is $\deg g=r,\deg h=s,gh^{-1}\bmod Q\in H_Q^\perp,\gcd(g,h)=1$. By Lemma \ref{secondOrthoGrpRepLemma}, for every such pair there is a polynomial $\varphi$ with $(\varphi,x)=1,\deg\varphi\le d/p$ such that
\begin{equation}\label{hg_lattice_eqn}
h(x)\equiv g(x)\varphi(x^p)\pmod{Q},
\end{equation}
and therefore $(g,h)$ is a primitive vector in the lattice $\Gamma_\varphi$ (defined in (\ref{eq:gammaphi})) with $\deg g=r,\deg h=s$.
There are at most $q^{d/p+1}$ possible values of $\varphi$ and by Lemma \ref{primitiveVectorsBoundLemma2} and the fact that $\mathrm{vol}(\Gamma_\varphi)=q^{\deg Q}\ge q^d$, for a given $\varphi$ there are at most $O(q^{r+s-d+2})$ possibilities for $(g,h)$.
In total there are $O\left(q^{s+r-d+d/p+3}\right)$ possibilities for the triple $\varphi,g,h$ and 
hence also for the number of coprime pairs $g,h$ as above. The total contribution of such pairs to 
$\tau(r,-s,H_Q^\perp)$ is therefore
$$O\left(rsq^{{s+r}-(1-1/p)d+3}\right).$$

\case{Non-coprime pairs $\gcd(g,h)\neq 1$} It is left to bound the contribution of the non-coprime pairs $(g,h)$ to $\tau(r,-s;H_Q^\perp)$. Such pairs are of the form $g,h$ which are a power of a common prime $x\neq P\in\P$. Write $g=P^{r/\deg P}, h=P^{s/\deg P}$. The condition $gh^{-1} \bmod{Q}\in H_Q^\perp$ is equivalent to
$
P^{(r-s)/\deg P}\bmod{Q}\in H_Q^\perp
$. Since $H_Q^\perp=\left((\F_q[x]/Q)^\times\right)^{p}$ this condition is automatic if $p|(r-s)/\deg P$ and is equivalent to $P\bmod Q$ being a $p$-th power if $p\nmid (r-s)/\deg P$

If $p|(r-s)/\deg P$ then $\deg P\le (r-s)/p$ and there are $O(q^\frac{r-s}p)$ suitable $P$, contributing a total of $O(s^2q^\frac{r-s}p)$ to the sum $\tau(r,-s,H_Q^\perp)$.
On the other hand if $p\nmid (r-s)/\deg P$ we must have $P\equiv\varphi(x)^p\pmod{Q}$, for some $
\varphi\in\field_q[x]$ of degree $\le d/p$ (since $P$ is prime this is only possible if $\deg P\ge d
$). There are $O(q^{d/p+1})$ possible values of $\varphi$ and having $\varphi$ fixed, $P$ has at most 
$q^{s-d}$ possible values (note that $\deg P\le s$ since $h$ is a power of $P$). Overall, the number of 
possible $P$ is at most $O(q^{s-(1-1/p)d+1})$ and their contribution to $\tau(s,-r,H_Q^\perp)$ is $O(s^2q^{s-(1-1/p)d+1})$.

We conclude that
$$
\tau(r,-s;H_Q^\perp)=O\left( rsq^{s+r-d(1-1/p)+3}+s^2q^{(r-s)/p}\right).
$$
Overall we get, using (\ref{eq:offdiag_Mtau}) and (\ref{eq:offdiag_linear_est}),
\begin{align*}
M_{H}^{r,-s} = O\left(q^{\frac rp-\frac{r+s}2}+rsq^{\frac{s+r}2 - (1-\frac 1p)d+3}+
s^2q^{\frac{r-s}p-\frac{r+s}2}\right),
\end{align*}
confirming (\ref{eq:offdiag1}) in this case.

\maincase{The case $M_{d}^{r,s}$, $r\ge s>0$} Recall that
$$\tau(r,s;H_Q^\perp)=\sum_{\deg g=r,\deg h=s\atop{gh\bmod Q\in H_Q^\perp}}\Lambda(g)\Lambda(h).$$ Once again we separate the contribution of coprime pairs (that is, pairs with $\gcd(g,h)=1$) and non-coprime pairs. 

\case{Coprime pairs $\gcd(g,h)=1$} We start with the case of coprime pairs. By Lemma \ref{secondOrthoGrpRepLemma} such pairs satisfy $gh\equiv\varphi(x^p) \pmod{Q}$, where $\varphi$ is a polynomial of degree $<\deg Q/p\le d/p$ such that $\varphi(0)\ne 0$. 

There are at most $q^{d/p+1}$ possible values of $\varphi$ and each $\varphi$ determines $q^{r+s-\deg Q}\le q^{r+s-d}$ possibilities for the product $w=gh$ (note that if $r+s<\deg Q$ then since $g,h$ are coprime prime powers and $\varphi(x^p)$ is a $p$-th power there are no suitable pairs). Each $w$ has at most two decompositions into coprime prime powers and we are left with $O(q^{r+s-d+d/p+1})$ pairs $(g,h)$ contributing $O(rsq^{r+s-(1-1/p)d+1})$ to $\tau(r,s;H_Q^\perp)$.

\case{Non-coprime pairs $\gcd(g,h)\neq 1$} If $(g,h)$ is a non-coprime pair then $g,h$ are powers of a common prime $P$ and are uniquely determined by the product $w=gh$. If $r+s<\deg Q$ then $w=\varphi(x^p)$ with $\varphi$ monic and there are $O(q^{(r+s)/p})$ possibilities for $w$ contributing $O(s^2q^{(r+s)/p+1})$ to $\tau(r,s;H_Q^\perp)$. On the other hand if $r+s\ge\deg Q$ then there are $O(q^{d/p+1})$ possible $\varphi$ and $O(q^{r+s-d})$ possible $w$ for each $\varphi$, hence there are $O(q^{r+s-(1-1/p)d+1})$ possible $w$, so we get an overall contribution of $O(s^2q^{r+s-(1-1/p)d+1})$ to $\tau(r,s;H_Q^\perp)$.

Overall we obtain
$$\tau(r,s;H_Q^\perp)=O\left(rsq^{r+s-(1-1/p)d+1}+s^2q^{(r+s)/p}\right),
$$
and therefore using (\ref{eq:MHrs}) and (\ref{eq:offdiag_linear_est}) that
$$M^{r,s}_d=O\left( rsq^{\frac{r+s}2-(1-1/p)d+1}+s^2q^{\left(\frac 1p-\frac 12\right)(r+s)}\right),$$
confirming (\ref{eq:offdiag1}) in this case as well. We have verified (\ref{eq:offdiag1}) in all cases.

\subsection{2-level density for the polynomial A-S family}\label{sec:2levelfinish}

The results of the last section allow us to prove Theorem \ref{thm:main1} by a standard calculation involving Fourier series. Let $\Phi\in\Scal(\R^2)$ be a fixed test function. In the present section the implicit constants in the $O$-notation may depend on the test function $\Phi$, i.e $O(\cdot)=O_\Phi(\cdot)$. All asymptotic notation is in the limit $d\to\infty$ and is uniform in $q$.

\begin{proof}[Proof of Theorem \ref{thm:main1}]

Denote $\mathcal M_d^{r,s}=\mean{T_f^rT_f^s}_{f\in\AS_d^0}.$ and recall the notation $M_d^{r,s}=\mean{T_f^rT_f^s}_{f\in\mathcal F_d}.$ By Lemma \ref{lem:twist} combined with (\ref{eq:as_fd}) and (\ref{eq:trace_of_as}) we have
\begin{equation}\label{eq:rel_masfd2}\mathcal M_d^{r,s}=\mean{\psi(\tr_{q/p}b)^{r+s}}_{b\in\F_q}\mean{T_f^rT_f^s}_{f\in\mathcal F_d}=\mathds 1_{p|r+s}\cdot M_d^{r,s}\end{equation} ($\mathds 1_{p|r+s}=1$ if $p|r+s$ and 0 otherwise), hence the same estimates as in (\ref{prop_diagonalTermBound}) and (\ref{eq:offdiag1}) apply to $\mathcal M_d^{r,s}$. Similarly if we denote $\mathcal M_d^r=\mean{T_f^r}_{f\in\AS_d^0}$ then 
\begin{equation}\label{eq:rel_masfd1}\mathcal M_d^r=\mathds 1_{p|r}\cdot M_d^r\end{equation} and the estimate in Proposition \ref{prop_AlexeiResult} applies to $\mathcal M_d^r$ as well.

Now if $\hat{\Phi}$ is supported on $|\eta|+|\xi|<2-2/p$
then in fact, $\hat{\Phi}$ is supported on $|\eta|+|\xi|<2-2/p-2\delta$
for a fixed $\delta>0$.
By Fourier expansion we have (see \cite[Corollary 6.5]{Ent12} for the full calculation)
\[
\mean{\Wtwofp}_{f\in\mathcal F_d}=\frac{1}{(d-1)^{2}}\sum_{r,s=-\infty}^{\infty}\hat{\Phi}
\left(\frac{r}{d-1},\frac{s}{d-1}\right)(M_{d}^{r,s}-M_{d}^{r+s}).
\]
and the same calculation shows
$$\mean{\Wtwofp}_{f\in\AS_d^0}=\frac{1}{(d-1)^{2}}\sum_{r,s=-\infty}^{\infty}\hat{\Phi}
\left(\frac{r}{d-1},\frac{s}{d-1}\right)(\mathcal M_{d}^{r,s}-\mathcal M_{d}^{r+s}).$$
Since $\hat{\Phi}$ is supported on $|\eta|+|\xi|<2-2/p-2\delta$,
\[
\mean{\Wtwofp}_{f\in\asfam}=\frac{1}{(d-1)^{2}}\sum_{|r|+|s|<2d(1-1/p-\delta)}\hat{\Phi}\left(\frac{r}{d-1},\frac{s}{d-1}\right)(\mathcal M_{d}^{r,s}-\mathcal M_{d}^{r+s}).
\]

First we bound the contribution to the sum of $r,s$ with $r\ne-s$.
Note that by (\ref{eq:rel_masfd1}) and Proposition \ref{prop_AlexeiResult} we have, taking $k=|r+s|$,
\begin{align*}
\sum_{|r|+|s|<2d(1-1/p-\delta)\atop r\ne -s}|\mathcal M_{d}^{r+s}|=&
\sum_{|r|+|s|<2d(1-1/p-\delta)\atop r\ne -s}O\left(q^{\frac{|r+s|}{p}-\frac{|r+s|}{2}}+|r+s|q^{\frac{|r+s|}{2}-d(1-1/p)}\right)\\
=&\sum_{k=1}^{2d(1-1/p-\delta)}O\left(dq^{k/p-k/2}+dkq^{k/2-d(1-1/p)}\right)=O(d+d^2q^{-\delta d})=O(d).
\end{align*}

Similarly, by (\ref{eq:rel_masfd2}) and \eqref{eq:offdiag1},
\begin{align*}
\sum_{|r|+|s|<2d(1-1/p-\delta)\atop{r\neq -s}}|\mathcal M_d^{r,s}|=&
\sum_{|r|+|s|<2d(1-1/p-\delta)}O\left(|rs|q^{\frac{|r|+|s|}{p}-\frac{|r|+|s|}{2}}+|rs|q^{\frac{|r|+|s|}{2}-d(1-1/p)+3}\right)\\
=&\sum_{k=1}^{2d(1-1/p-\delta)}O(dk^2 q^{k/p-k/2})+O(d^3q^{-\delta d+3})=O(d).
\end{align*}

Hence as $\hat{\Phi}$ is bounded (continuous function with compact support) we
get that the overall contribution to $\mean{\Wtwofp}_{f\in\asfam}$
of the terms with $r\ne-s$ is $O(1/d)$. It remains to estimate the contribution of the $r=-s$ terms, which using (\ref{eq:rel_masfd1}) and \eqref{eq:offdiag1} is
$$\frac{1}{(d-1)^{2}}\sum_{-d(1-1/p-\delta)<r<d(1-1/p-\delta)}\hat{\Phi}\left(\frac{r}{d-1},\frac{-r}{d-1}\right)(\mathcal M_{d}^{r,-r}-\mathcal M_{d}^{0})
$$
$$=\frac{d-2}{d-1}\hat{\Phi}(0,0)+\frac{1}{(d-1)^2}\sum_{0\ne|r|<d(1-1/p-\delta)} \hat{\Phi}\left(\frac{r}{d-1},\frac{-r}{d-1}\right)(\mathcal M_{d}^{r,-r}-d+1),
$$
(note that $\mathcal M_{d}^{0}=d-1,\mathcal M_{d}^{0,0}=(d-1)^{2}$). By (\ref{prop_diagonalTermBound}) and (\ref{eq:rel_masfd2}) we have $$\mathcal M_{d}^{r,-r}=r+O(r^2q^{r-d(1-1/p)+3}+rq^{-r/2}).$$
As above we can bound the overall contribution of the error term by $O(1/d)$. Combining the estimates we get
\begin{multline*}
\mean{\Wtwofp}_{f\in\asfam}\\
=\frac{d-2}{d-1}\hat{\Phi}(0,0)+\frac{1}{(d-1)^{2}}\sum_{|r|<d(1-1/p-\delta)}\hat{\Phi}\left(\frac{r}{d-1},\frac{-r}{d-1}\right)(|r|-d+1)+O(1/d)\\
=\frac{d-2}{d-1}\hat{\Phi}(0,0)+\sum_{|r|<d(1-1/p-\delta)}\hat{\Phi}\left(\frac{r}{d-1},\frac{-r}{d-1}\right)\left(\frac{|r|}{d-1}-1\right)\frac{1}{d-1}+O(1/d)\\
\to_{d\to\infty}\hat{\Phi}(0,0)+\int_{-\infty}^{\infty}\Phi(\sigma,-\sigma)(|\sigma|-1)d\sigma=\hat{\Phi}(0,0)-\int_{-\infty}^{\infty}\hat{\Phi}(\sigma,-\sigma)\max(1-|\sigma|,0)d\sigma,
\end{multline*}
by the definition of the Riemann integral (we use
the fact that $\hat{\Phi}(\sigma,-\sigma)$ is supported on $|\sigma|<1-1/p-\delta$).
By \cite{Ent12}*{Lemma 6.6}, this is equal to the desired limit 
$$
\int_{-\infty}^{\infty}\int_{-\infty}^{\infty}\Phi(t,s)\left(1-\left(\frac{\sin\left(\pi(t-s)\right)}{\pi(t-s)}\right)^{2}\right)\d t\d s.
$$
\end{proof}

\section{The odd polynomial A-S family}\label{sec_odd_poly}

In the present section we study the 1-level density of the odd polynomial A-S family $\oasfam$ and prove Theorem \ref{thm:main2}. Once again the theorem will follow by a standard Fourier series calculation from a good estimate of $\tmean{{T_{f}^r}}_{f\in\oasfam}$ in a suitable range, where $\chi_f$ is defined by (\ref{eq:trace_of_as}) and $T_{f}^r$ is defined by (\ref{eq:def_trace}). We obtain a good estimate for $\tmean{{T_{f}^r}}_{f\in\oasfam}$ in the range $r<d(1-1/p)$, significantly strengthening the result of \cite[Theorem 4]{Ent12} where the range of $r$ is only logarithmic in $d$. This allows us to compute the 1-level density for this family when the test function has Fourier transform supported on $\left(-(1-1/p),1-1/p\right)$.

Throughout the present section we assume $(d,2p)=1$ and denote
$M_{d}^{r}=\langle{T_{f}^{r}}\rangle_{f\in\oasfam}$. We also adopt the notation of sections \ref{sec:dir_pol}, \ref{sec:odd_as_to_dir} and \ref{sec:avtraces}. In particular for $f\in\AS_d^{0,\odd}$ the associated Dirichlet character $\chi_f$ is defined by (\ref{eqn_chifDef}). Until section \ref{sec:odd_density} the implicit constant in the asymptotic $O(\cdot)$ notation is absolute.

\subsection{Estimating $M_d^r$}
In the present subsection we will prove the following
\begin{prop}\label{prop_tracesMeanAsymptotic} For $r\ge 1$,
$$M_{d}^{r}=\mathds{1}_{2|r}+O\left(rq^{-(r/6)+3\omega}+rq^{\frac 12(r-(1-1/p)d)+3}\right),$$
where $\mathds{1}_{2|r}$ is 1 if $r$ is even and 0 if $r$ is odd and $\omega=\mathds 1_{r>d/4-1}$ is 1 if $r>d/4-1$ and 0 otherwise.
\end{prop}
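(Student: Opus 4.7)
The plan is to apply Lemma~\ref{dirCharacterAvgLemma} with $Q = x^{d+1}$, which gives
\[
M_d^r = -q^{-r/2}\Bigl(1 + \sum_{Q' \mid x^{d+1}} \mu(x^{d+1}/Q')\, \tfrac{\#H_{Q'}}{\#\prgrp{H}}\, \psi(r; H_{Q'}^\perp)\Bigr).
\]
Only $Q' \in \{x^d, x^{d+1}\}$ contribute (otherwise $\mu(x^{d+1}/Q') = 0$), and by Lemma~\ref{OrthoGroupLemma1} we have $H_{Q'}^\perp = AB$, where $A$ consists of residues representable by an element of $\F_q[x^2]$ and $B$ of $p$-th powers. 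The task thus reduces to estimating $\psi(r; AB)$ for these two moduli.

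For a prime power $c$ of degree $r$ with $(c,x) = 1$, the condition $c \bmod Q' \in AB$ is equivalent to the existence of $u, v \in \F_q[x]$ with $(uv,x) = 1$ such that $c \equiv u(x^2)\, v(x)^p \pmod{Q'}$. Using the even/odd decomposition $c = c_e + c_o$, $v = v_e + v_o$ together with the Frobenius identity $v^p = v_e^p + v_o^p$ valid in characteristic $p$, this congruence splits into separate even and odd parts, yielding the bilinear relation $c_o\, v_e^p \equiv c_e\, v_o^p \pmod{Q'}$. This reformulation provides the key structure: it isolates a diagonal regime and produces the lattices in which the off-diagonal contributions will be counted.

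The main term of $\psi(r; AB; Q')$ comes from the diagonal solutions obtained with $v_o = 0$ (equivalently $v \in \F_q[x^2]$), which force $c_o \equiv 0 \pmod{Q'}$. For $\deg c < \deg Q'$ this means $c \in \F_q[x^2]$ literally, and a short case analysis (using $p$ odd and $(c,x) = 1$) shows that this happens for a prime power $c = P^k$ iff $P \in \F_q[x^2]$. The irreducible $P$ of degree $r$ in $\F_q[x^2]$ correspond to irreducible $\tilde P(y) \in \F_q[y]$ of degree $r/2$ whose root is a non-square in $\F_{q^{r/2}}$; by the Prime Polynomial Theorem their number is $\sim q^{r/2}/r$ for even $r$ and zero for odd $r$, contributing $\sim q^{r/2}\mathds{1}_{2|r}$ to $\psi(r; AB; Q')$. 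Combining with the inclusion-exclusion coefficients and the identity $\#H_{x^{d+1}} - \#H_{x^d} = \#\prgrp{H}$ produces the main term $\mathds{1}_{2|r}$ in $M_d^r$.

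The off-diagonal contribution, coming from pairs $(c, v)$ with $v_o \not\equiv 0$ or $c$ not literally in $\F_q[x^2]$, is bounded by viewing $(c_e, c_o)$ for each fixed $v$ as a primitive vector in a rank-two $\F_q[x]$-lattice defined by $c_o v_e^p \equiv c_e v_o^p \pmod{Q'}$ and applying Lemmas~\ref{primitiveVectorsBoundLemma} and~\ref{primitiveVectorsBoundLemma2}, together with a separate treatment of non-coprime prime-power pairs as in Section~\ref{sec_full_as_family}. Summing over $\deg v$ up to an appropriate threshold yields the first error term $rq^{-r/6+3\omega}$, where the jump at $r = d/4 - 1$ reflects the regime transition in the $\max(q, q^{2r-m+2})$ bound of Lemma~\ref{primitiveVectorsBoundLemma}; the second error term $rq^{(r-(1-1/p)d)/2 + 3}$ arises from the boundary regime where $r$ approaches $(1 - 1/p)d$ and the volume of the lattice is small relative to $q^r$. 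The main technical obstacle will be tuning the parametrization of $v$ so that the off-diagonal count saves $q^{r/3}$ (and hence $q^{-r/6}$ after the overall $q^{-r/2}$ factor): a direct analogue of the argument in Proposition~\ref{prop_AlexeiResult} only gives $q^{-(1/2 - 1/p)r}$ savings, so exploiting the bilateral $A \cdot B$ structure here is essential to getting past that barrier.
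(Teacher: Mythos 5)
Your overall strategy matches the paper's: reduce to estimating $\psi(r;H_Q^\perp)$ for $Q\in\{x^d,x^{d+1}\}$ via Lemma \ref{dirCharacterAvgLemma}, use $H_Q^\perp=AB$ from Lemma \ref{OrthoGroupLemma1}, extract the main term $\mathds 1_{2|r}q^{r/2}$ from the contribution of $A$ (primes in $\F_q[x^2]$, counted by the Prime Polynomial Theorem), and bound the contribution of $AB\setminus A$ by an even/odd splitting that places the data of $c$ in a rank-two $\F_q[x]$-lattice; your relation $c_o v_e^p\equiv c_e v_o^p$ is essentially the paper's $u_2\equiv\varphi(x^p)u_1\pmod{x^{(d-p)/2}}$ after dividing through by the even part. (A small point: the computation actually produces $-\mathds 1_{2|r}$, since $\frac{q}{q-1}-\frac{1}{q-1}=1$ and the whole expression carries the factor $-q^{-r/2}$; this is the sign consistent with the symplectic prediction and with how the proposition is invoked in the proof of Theorem \ref{thm:main2}.)

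Two steps in your off-diagonal bound are genuinely missing. First, you count over $v$ (``for each fixed $v$''), but what must be counted is the ratio: the paper shows $u_2/u_1\equiv\varphi(x^p)\pmod{x^{(d-p)/2}}$ with $\varphi$ a polynomial of degree $<(d-p)/2p$, so only $O(q^{d/2p})$ lattices $\Gamma_\varphi$ occur. Counting pairs $(v_e,v_o)$ modulo the relevant power of $x$ gives $O(q^{d/p})$ instead, and after the lattice-point count this yields an error of size $rq^{\frac12(r-(1-2/p)d)+O(1)}$, which only covers the range $r<(1-2/p)d$ and cannot reach the support $1-1/p$ required by Theorem \ref{thm:main2}. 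You flag ``tuning the parametrization of $v$'' as the main obstacle, but this is precisely the step that has to be carried out. Second, and more seriously, for small $r$ the argument you describe fails: in the regime where Lemma \ref{primitiveVectorsBoundLemma} returns $\max(q,q^{2r-m+2})=q$, the total count is still $(\text{number of }\varphi)\cdot q=O(q^{d/2p+1})$, which is nowhere near the required $O(q^{r/p})$ when $r\ll d$; with only that bound, the $r=1$ term already contributes $\gg q^{d/2p}/d$ to the 1-level density and the proof of Theorem \ref{thm:main2} collapses. The paper closes this with a separate idea (the case $r<d/2-p-1$ of Proposition \ref{prop_nonChebotarevBound}): it uses Lemma \ref{lem:lin_eqn_lattice_vol}(ii) to produce a short nonzero vector $(F_1,F_2)$ with $F_2\equiv\varphi F_1$, upgrades the congruence $u_1(x)F_2(x^p)\equiv u_2(x)F_1(x^p)$ to an exact equality by comparing degrees, and deduces from the primitivity of $(u_1,u_2)$ that $c$ is a $p$-th power, of which there are at most $q^{r/p}$. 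This extra argument, absent from your sketch, is what makes the first error term $rq^{-r/6+3\omega}$ attainable.
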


Note that this estimate is only useful if $r<(1-1/p)d$. We begin with the observation that by Proposition \ref{prop:astodir} combined with (\ref{eq:trace_of_as}) and (\ref{eq:def_trace}) we have $T^r_f=T^r_{\chi_f}$ for $f\in\AS_d^{0,\odd}$.
Let $H$ be the group of Dirichlet characters defined by (\ref{eq:H_asodd}). One easily computes from Lemma \ref{OrthoGroupLemma1} and Proposition \ref{prop:odd_as_to_dir} that
$$\#H=\#H_{x^{d+1}} = q^{\frac{d+1}2-\lfloor\frac dp\rfloor+\lfloor\frac d{2p}\rfloor},\quad\#H_{x^d}=q^{\frac{d-1}2-\lfloor\frac dp\rfloor+\lfloor\frac d{2p}\rfloor},\quad
\#H^{\mathrm{pr}}=(q-1)q^{\frac{d-1}2-\lfloor\frac dp\rfloor+\lfloor\frac d{2p}\rfloor}.$$
Lemma \ref{dirCharacterAvgLemma} now implies
\begin{equation}\label{AppliedLemma2}
M_d^r=M_H^r=\mean{T_\chi^r}_{\chi\in \prgrp{H}}=-q^{-r/2}\left(1+
\frac{q}{q-1}\psi(r;H_{x^{d+1}}^\perp)-\frac{1}{q-1}\psi(r;H_{x^d}^\perp)
\right).
\end{equation}
For the rest of the present section let $Q=x^d$ or $x^{d+1}$. By (\ref{AppliedLemma2}), Proposition \ref{prop_tracesMeanAsymptotic} would follow from the following estimate:

\begin{prop}\label{prop_etaBound}
Let $r\in\N$. We have $$\psi(r; H_Q^\perp)=\mathds{1}_{2|r}\cdot q^{r/2}+O\left(rq^{(r/3)+3\omega}+rq^{r-(1/2-1/2p)d+3}\right),$$
where $\omega=\mathds 1_{r>d/4-1}$.
\end{prop}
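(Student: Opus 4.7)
By Lemma \ref{OrthoGroupLemma1}, $H_Q^\perp=AB$ where $A$ consists of residues modulo $Q$ of polynomials in $x^2$ and $B$ of residues of polynomials in $x^p$ (equivalently, $p$-th powers modulo $Q$). The plan is to expand
\[
\psi(r;H_Q^\perp)=\sum_{\substack{P\ne x\text{ monic prime},\,k\ge 1\\ k\deg P=r,\;P^k\bmod Q\in AB}}\deg P
\]
and analyze each prime-power contribution. Using the orthogonality $\mathds{1}_{c\in AB}=\tfrac{1}{|H_Q|}\sum_{\chi\in H_Q}\chi(c)$ combined with the fact that $H_Q$ has exponent $p$ (so $\chi\mapsto\chi^k$ is a bijection on $H_Q$ when $\gcd(k,p)=1$, and $\chi^k=1$ when $p\mid k$), one finds that $\mathds{1}_{P^k\bmod Q\in AB}=\mathds{1}_{P\bmod Q\in AB}$ for $\gcd(k,p)=1$ and $\mathds{1}_{P^k\bmod Q\in AB}=1$ for $p\mid k$. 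The latter case contributes at most $O(q^{r/p})\subseteq O(q^{r/3})$, absorbed into the first error term.

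The main term arises from primes $P$ of degree $r$ in $AB\bmod Q$ (the case $k=1$). The structural observation is that the simplest primes in $AB\bmod Q$ are the \emph{symmetric} ones of the form $P(x)=R(x^2)$ with $R\in\F_q[y]$ monic of degree $r/2$; such a $P$ is irreducible over $\F_q$ iff $R$ is irreducible of degree $r/2$ and its roots are non-squares in $\F_{q^{r/2}}$. Applying the Prime Polynomial Theorem to the quadratic base change $\F_q[y]\hookrightarrow\F_q[x]$, $y\mapsto x^2$, the count of such $P$ is $q^{r/2}/r+O(q^{r/4}/r)$ for even $r$ and $0$ for odd $r$; multiplying by $\Lambda(P)=r$ yields the claimed main term $\mathds{1}_{2\mid r}\cdot q^{r/2}$.

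It remains to bound the \emph{parasitic} contributions: primes $P$ (of degree $r/k$ for various $k$ coprime to $p$) satisfying $P\equiv g(x^2)h(x^p)\pmod Q$ with $h$ non-constant, including the possibility of cancellation through the modulus $Q$. For each fixed $h$ with $x\nmid h$ and $\deg h\le d/p$, the valid pairs $(P,g)\in\F_q[x]^2$ form a rank-$2$ $\F_q[x]$-lattice
\[
\Lambda_h=\{(P,g)\in\F_q[x]^2:P\equiv g(x^2)h(x^p)\pmod Q\}
\]
of volume $q^{\deg Q}$ by Lemma \ref{lem:lin_eqn_lattice_vol}(i). Lemma \ref{primitiveVectorsBoundLemma} bounds its primitive vectors of degree $\le r$ by $\max(q,q^{2r-d+2})$; summing over the $O(q^{d/p+1})$ admissible $h$ yields the bound $O(rq^{r-(p-1)d/(2p)+3})$, matching the second error term. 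The indicator $\omega=\mathds{1}_{r>d/4-1}$ in the first error term $rq^{r/3+3\omega}$ records the transition between the two branches of $\max(q,q^{2r-d+2})$ as applied in auxiliary estimates for the sub-sums indexed by $k\ge 2$.

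The principal technical challenge will be carefully combining these estimates: isolating the main-term irreducibles $R(x^2)$ from the parasitic contributions without double-counting, and handling the short-vector bound uniformly across the various regimes of $r$ versus $d$ (in particular navigating the $\max$-branch cutoff corresponding to $\omega$) to produce the stated uniform error terms.
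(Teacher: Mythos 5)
Your overall skeleton (split $H_Q^\perp=AB$ into the ``diagonal'' part $A$, count primes of the form $R(x^2)$ by the Prime Polynomial Theorem for the quadratic base change, and bound the rest by lattice point counting) matches the paper's, and your treatment of the main term and of the prime powers $P^k$ with $p\mid k$ is fine. The gap is in the off-diagonal bound. First, the set $\Lambda_h=\{(P,g):P\equiv g(x^2)h(x^p)\pmod Q\}$ is not an $\F_q[x]$-submodule of $\F_q[x]^2$ (scaling by $a\in\F_q[x]$ sends $g(x^2)$ to $a(x)g(x^2)$, not to $(ag)(x^2)$), so Lemma \ref{lem:lin_eqn_lattice_vol} and Lemma \ref{primitiveVectorsBoundLemma} do not apply to it. Second, and more seriously, even granting the bound $\max(q,q^{2r-\deg Q+2})$ for the relevant pairs, multiplying by the $O(q^{d/p+1})$ choices of $h$ and by $\Lambda(c)\le r$ gives $O\left(rq^{2r-(1-1/p)d+3}\right)$, not the $O\left(rq^{r-\frac12(1-1/p)d+3}\right)$ you assert: the two agree only at $r=\frac12(1-1/p)d$, and for $\frac12(1-1/p)d<r<(1-1/p)d$ --- exactly the range needed to reach the support condition $|\tau|<1-1/p$ in Theorem \ref{thm:main2} --- your bound exceeds the main term $q^{r/2}$ and the argument collapses.

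The missing idea is the even/odd decomposition. The paper writes a prime power $c$ with $c\bmod Q\in AB\setminus A$ and $\deg c=r<d$ as $c(x)=u_1(x^2)+x^pu_2(x^2)$ with $u_2\ne0$, and after dividing out the $A$-part converts the membership condition into a congruence $u_2\equiv u_1\varphi(x^p)\pmod{x^{(d-p)/2}}$ with $\deg\varphi<(d-p)/(2p)$. Now $(u_1,u_2)$ is a primitive vector of degrees at most $\lfloor r/2\rfloor$ and $\lfloor(r-p)/2\rfloor$ in a genuine $\F_q[x]$-lattice of volume $q^{(d-p)/2}$; the degrees, the modulus and the number of choices of the auxiliary polynomial are all halved, and this is exactly what produces $q^{r-\frac12(1-1/p)d+O(1)}$. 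A separate argument is also needed for $r<d/2-p-1$: there the paper shows the congruence forces $c$ to be a $p$-th power, giving $O(q^{r/p})$ choices, without which the per-$\varphi$ bound of at most $q$ primitive vectors still leaves a factor $q^{d/(2p)}$ that is not absorbed by $rq^{r/3+3\omega}$ for small $r$. Your description of where $\omega$ comes from is also not right: it marks the intermediate range $d/2-p-1<r<d/2$, not a branch of the $\max$ in the $k\ge2$ sub-sums.
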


The proof of Proposition \ref{prop_etaBound} (from which Proposition \ref{prop_tracesMeanAsymptotic} follows) occupies the rest of the present subsection.
Using the notation of Lemma \ref{OrthoGroupLemma1} we let $A,B\subset(\mathbb{\mathbb{F}}_{q}[x]/Q)^{\times}$
be the following subgroups:
$$
A=\{f\in(\field_{q}[x]/Q)^\times:f(0)\ne0,f(x)\equiv g(x^{2})\pmod{Q},\text{ for some }g\in\field_q[x]\},
$$
$$
B=\{f\in(\mathbb{\mathbb{F}}_{q}[x]/Q)^\times:f(0)\ne0,f(x)\equiv g(x^{p})\pmod{Q},\text{ for some }g\in\field_q[x]\}.
$$
By Proposition \ref{OrthoGroupLemma1}, $H_Q^{\perp}=AB$. Our strategy in bounding the size of $\psi(r; H_Q^\perp)$ is as follows: we will estimate the "diagonal" contribution to $\psi(r; H_Q^\perp)$ of polynomials of the form $f(x^2)$ using the Chebotarev Density Theorem (in an elementary case which actually follows from the Prime Polynomial Theorem) and bound the "off-diagonal" contribution coming from $f\bmod Q\in AB\setminus A$. Note that the diagonal contribution is exactly $\psi(r;A)$.

\begin{prop}[Off-diagonal estimate]\label{prop_nonChebotarevBound}
We have (recall that $d$ is odd)
$$\psi(r; H_Q^\perp)-\psi(r;A)=O\left(rq^{r-(1-1/p)\frac{d}{2}+3}+rq^{\frac rp+3\omega}\right),$$
where $\omega=\mathds 1_{r>d/4-1}$.
\end{prop}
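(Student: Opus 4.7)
The plan is to bound the sum
\[ \psi(r; H_Q^\perp) - \psi(r; A) = \sum_c \Lambda(c), \]
where $c$ ranges over monic prime powers in $\F_q[x]$ of degree $r$ whose residue modulo $Q$ lies in $AB \setminus A$, using the identification $H_Q^\perp = AB$ from Lemma \ref{OrthoGroupLemma1}. Each such $c$ satisfies $c \equiv h(x^2) g(x^p) \pmod Q$ for some $h, g \in \F_q[x]$ with $h(0), g(0) \neq 0$ and degrees bounded so $h(x^2), g(x^p)$ have $x$-degree $< \deg Q$, and the assumption $c \bmod Q \notin A$ forces $g(x^p) \bmod Q \notin A$. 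I would split the analysis into two cases depending on whether the congruence is a literal equality of polynomials or involves modular reduction.

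In \textbf{Case A} (no wraparound), $c = h(x^2) g(x^p)$ exactly. Since $\mathrm{char}(\F_q) = p$, we have $g(x^p) = g_1(x)^p$ for some $g_1 \in \F_q[x]$, so $c = h(x^2) g_1(x)^p$. When $c = P^k$ is a prime power, unique factorization combined with the incompatibility of the symmetries of $h(x^2)$ and $g_1(x)^p$ (beyond a constant overlap) forces either $g$ to be constant (excluded by nontriviality) or $h$ to be constant, in which case $c$ is essentially a $p$-th power and there are at most $O(q^{r/p})$ such prime powers. This contributes $O(r q^{r/p})$ to the sum, accounting for the term $r q^{r/p + 3\omega}$ in the target bound.

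In \textbf{Case B} (nontrivial wraparound), I would exploit the small size of the coset space. The classes of $g(x^p)$ modulo $A$ span $AB/A \cong B/(A \cap B)$, with $|A \cap B| \approx q^{d/(2p)}$ since $A \cap B$ consists of residues of polynomials in $x^{2p}$ (using $\gcd(2,p)=1$) and $|B| \approx q^{d/p}$, giving $|AB/A| \approx q^{d/(2p)}$. For each nontrivial coset I pick a representative $g(x^p)$ with $\deg g \leq (d-1)/p$, and count monic prime powers $c$ of degree $r$ lifting an element of the coset $g(x^p) A \pmod Q$. Setting $y = x^2$ and decomposing $\F_q[x] \cong \F_q[y] \oplus x\F_q[y]$, writing $c = c_0(y) + x c_1(y)$, the condition $c \bmod Q \in g(x^p) A$ becomes a congruence system that defines a rank-2 $\F_q[y]$-lattice $\Gamma_g \subset \F_q[y]^2$ of volume $q^M$ with $M \in \{(d-1)/2, (d+1)/2\}$ depending on whether $Q = x^d$ or $x^{d+1}$. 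Applying the short-vector bound of Lemma \ref{primitiveVectorsBoundLemma2} (whose proof adapts verbatim from $\F_q[x]$ to $\F_q[y]$) to primitive vectors of $y$-degree at most $r/2$ gives at most $\max(q, q^{r - M + O(1)})$ per coset; summing over the $\approx q^{d/(2p)}$ nontrivial cosets and accounting for $\Lambda(c) \leq r$ yields $O(r q^{r - (1-1/p)d/2 + 3})$, matching the first term.

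The main technical obstacle is handling Case B across the different regimes of $r$. When $r \lesssim d/4$ the bound $\max(q, q^{r - M + O(1)}) = q$ is loose, and I need to argue that for most cosets the minimum nonzero vector in $\Gamma_g$ has $y$-degree exceeding $r/2$, so no short vectors contribute; this is precisely where the indicator $\omega = \mathds{1}_{r > d/4 - 1}$ enters, reflecting the transition across the Minkowski-type threshold $M/2 \approx d/4$. Non-primitive lattice vectors also require separate treatment: when $(c_0, c_1) \in \Gamma_g$ fails to be primitive, $c = c_0 + x c_1$ acquires a divisor of the form $D(x^2)$, and combining this with $c$ being a prime power severely restricts $c$ (forcing it, roughly, to be a prime power whose underlying prime factors through a polynomial in $x^2$) and contributes negligibly to the sum.
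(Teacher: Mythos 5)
Your overall framework --- writing the condition $c\bmod Q\in AB\setminus A$ as membership in one of roughly $q^{d/(2p)}$ cosets of $A$, converting each coset condition into a rank-2 lattice over $\F_q[y]$ (with $y=x^2$) of volume about $q^{d/2}$, and counting primitive short vectors --- is essentially the same as the paper's (which indexes the lattices by $\varphi$ with $\deg\varphi<(d-p)/2p$ rather than by cosets, an equivalent parametrization). Your Case A and your treatment of non-primitive vectors are fine, and your Case B does yield the term $rq^{r-(1-1/p)d/2+3}$ in the regime $r\gtrsim d/2$.

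The genuine gap is the regime $r\lesssim d/2$ in Case B, which you flag as "the main technical obstacle" but do not resolve, and the resolution you sketch would not give the required bound. You propose to show that "for most cosets the minimum nonzero vector in $\Gamma_g$ has $y$-degree exceeding $r/2$." Even granting that, you must still bound the number of \emph{bad} cosets, and the only a priori handle on it is that each bad coset is detected by a primitive pair $(c_0,c_1)$ of $y$-degree at most $r/2$ (which determines the coset via $c_1/c_0\bmod y^M$); this gives at most $q^{r+O(1)}$ bad cosets, each contributing a monic prime power, for a total of $O(rq^{r})$ --- no better than trivial and far from the needed $O(rq^{r/p})$. To beat this you must exploit the special shape of the admissible $\varphi$'s (ratios of the form $y^{(p-1)/2}g_o(y^p)/g_e(y^p)$), and the paper does so by a different mechanism: for each lattice it produces, via Lemma \ref{lem:lin_eqn_lattice_vol}(ii), a short relation $(F_1,F_2)$ with $F_2\equiv\varphi F_1\pmod{x^{\floor{(d-p)/2p}+1}}$, chooses the degree cutoffs $l,m$ so that both sides of the resulting congruence $u_1(x)F_2(x^p)\equiv u_2(x)F_1(x^p)\pmod{x^{(d-p)/2}}$ have degree below the modulus, and concludes an exact polynomial identity; primitivity of $(u_1,u_2)$ then forces $u_1=aF_2(x^p)$, $u_2=aF_1(x^p)$, whence $c\in\F_q[x^p]$ is a $p$-th power and the total count collapses to $q^{r/p}$. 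This "congruence becomes an equality, hence every contributing $c$ is a $p$-th power" step is the essential idea missing from your argument; without it (or a substitute of comparable strength) the bound $O(rq^{r/p+3\omega})$ is not established for $r$ below roughly $d/2$. As a minor point, your attribution of $\omega=\mathds 1_{r>d/4-1}$ to a Minkowski threshold at $d/4$ is not quite what happens: the weaker exponent $r/p+3$ arises in the paper only in the narrow band $d/2-p-1<r<d/2$, and $d/4-1$ is merely a convenient lower bound for its left endpoint.
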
 

%

\begin{proof}
Note that (for $r<d$)
$$\psi(r; H_Q^\perp)-\psi(r;A)=\sum_{\deg c=r\atop{\mathrm{monic}\atop{c\bmod Q\in AB\setminus A}}}\Lambda(c).$$
We will assume that $d\ge p$, otherwise the sum is empty. Let $c\in\F_q[x]$ be a (monic) prime power such that $\deg c=r<d$ and $c\bmod Q\in H_Q^\perp=AB\setminus A$. By the definition of $A,B$ one can write
\begin{equation}\label{eq:fg1g2}
c(x)\equiv g_1(x^2)g_2(x^p)\equiv \left(\sum_{i=0}^l a_i x^{2i}\right)\left(\sum_{j=0}^s b_j x^{pj}\right)\pmod{Q}.
\end{equation}

The minimal odd power that can appear in $c$ is $x^p$, and so writing $c$ as the sum of an odd polynomial and an even polynomial we will have \begin{equation}\label{eq:fu1u2}c(x)=u_{1}(x^{2})+x^{p}u_{2}(x^{2}),\,u_2\neq 0\end{equation} (here $u_2\neq 0$ because we are assuming $c\bmod Q\not\in A$ and $\deg c=r<d\le\deg Q$). Plugging this into (\ref{eq:fg1g2}) we obtain
$
u_1(x^2)+x^pu_2(x^2)\equiv g_1(x^2)g_2(x^p)\pmod{Q}
$. Divide this equation by $g_1(x^2)$ and note that the inverse of $g_1(x^2)$ modulo $Q$ has the form $h(x^2)$ for some $h\in\F_q[x]$ (since $A$ is a group). We get
\begin{equation}\label{hu1u2_eqn}
h(x^2)u_1(x^2)+x^ph(x^2)u_2(x^2)\equiv g_2(x^p) \pmod{Q}.
\end{equation}
Separating into even and odd exponents we can write
$$
h(x^2)u_1(x^2)\equiv h_1(x^{2p}),\, x^ph(x^2)u_2(x^2)\equiv x^ph_2(x^{2p})\pmod{Q},
$$ for some $h_1,h_2\in\F_q[x]$.
Equivalently (recall that $Q$ is $x^d$ or $x^{d+1}$, $(d,2p)=1$ and $p$ is odd), \begin{equation}\label{eq:def_h1}h(x)u_1(x)\equiv h_1(x^p) \pmod{x^{(d+1)/2}},\end{equation}
\begin{equation}\label{eq:def_h2}h(x)u_2(x)\equiv h_2(x^{p})\pmod{x^{(d-p)/2}}.\end{equation}

Since $x\nmid c$ we must have  $x\nmid g_2$, and by (\ref{eq:fg1g2}), (\ref{hu1u2_eqn}) and (\ref{eq:def_h1}) also $x\nmid h,u_1,h_1$. Thus we can divide (\ref{eq:def_h2}) by (\ref{eq:def_h1}) to obtain a congruence
\begin{equation}\label{eq:phicong}
\frac{u_2(x)}{u_1(x)}\equiv \frac{h_2(x^p)}{h_1(x^p)}\equiv\varphi(x^p)\pmod{x^{(d-p)/2}},
\end{equation}
for some $\varphi\in\F_q[x]$ with $\deg\varphi<(d-p)/2p$. 

Denote by $\Gamma_\varphi\subset\F_q[x]^2$ the lattice of solutions $(U_1,U_2)$ to the congruence
\begin{equation}\label{eq:congruence}
U_2\equiv U_1\varphi(x^p)\pmod{x^{(d-p)/2}}.
\end{equation}
By Lemma \ref{lem:lin_eqn_lattice_vol} we have $\mathrm{vol}(\Gamma_\varphi)=q^{(d-p)/2}$. 
Now $(u_1, u_2)$ is a vector on $\Gamma_\varphi$ with $\deg u_1\le\floor{\frac r2},\deg u_2\le\floor{\frac{r-p}2}$ (by (\ref{eq:fu1u2})). It is 
also a primitive vector, since if $u_1,u_2$ had a nonconstant common divisor by (\ref{eq:fu1u2}) the polynomial $c$ 
could not be a prime power (indeed if $w$ is such a common divisor then $w(x^2)|c$ and if $c$ is a 
power of a prime $P\neq x$ then $w(x^2)$ is also a power of $P$ and we must have $P\in\F_q[x^2]$ and $c\in
\F_q[x^2]$, a contradiction to (\ref{eq:fu1u2}) and $u_2\neq 0$). By Lemma \ref{primitiveVectorsBoundLemma}, for given $\varphi$ there are 
at most $$q^{\max(1,\floor{r/2}+\floor{(r-p)/2}-(d-p)/2+2)}$$ suitable primitive solutions $(u_1,u_2)$ of (\ref{eq:congruence}). We have 
$O(q^\floor{d/2p+1/2})$ possibilities for $\varphi$ (since $\deg\varphi<(d-p)/2p$), so overall the number of possible $(u_1,u_2)$ is 
\begin{equation}\label{eq:full_basis_bound} O\left(q^{\max\left(1,\frac r2+\floor{\frac{r-p}2}-\frac{d-p}2+2\right)+\floor{\frac d{2p}+\frac 12}}\right).\end{equation} Since $c$ is completely determined by $u_1,u_2$, this is also a bound on the number 
of possible $c$. We now subdivide our analysis into three ranges for $r$ with respect to $d,p$ (the ranges we consider are exhaustive because $d$ is odd).

\case{The case $r\ge d/2$} In this case (\ref{eq:full_basis_bound}) is $O(q^{r-(1-1/p)\frac{d}
{2}+3})$, which implies the assertion of the proposition since each $c$ contributes $\Lambda(c)\le r
$ to the sum.

\case{The case $d/2-p-1< r< d/2$} If $r<p$ then (\ref{eq:fg1g2}) cannot be satisfied, hence we may assume $r\ge p$. Since also $r>d/2-p-1$, we have $r>d/4-1$ and so $\omega=1$ in the notation of Proposition \ref{prop_nonChebotarevBound}. We also have
$\frac d{2p}+\frac 12<\frac rp+2$, 
hence (\ref{eq:full_basis_bound}) is $O\left(q^{r/p+3}\right)$ and noting that $\Lambda(c)\le r$ we obtain the stated bound.

\case{The case $r<d/2-p-1$} For $\varphi$ as above we note that by Lemma \ref{lem:lin_eqn_lattice_vol}(ii),
for any given $l,m$ with $l+m\ge\floor{(d-p)/2p}$ we may find a nonzero vector $(F_1,F_2)\in\field_q[x]^2$ 
with $\deg F_1\le l,\deg F_2\le m$ satisfying $$F_2\equiv\varphi F_1\pmod{x^{\floor{(d-p)/2p}+1}}.$$ Then we 
have $F_2(x^p)\equiv \varphi(x^p)F_1(x^p)\pmod {x^{(d-p)/2}}$ and \eqref{eq:phicong} implies \begin{equation}\label{eq:congruence_uF}u_1(x) 
F_2(x^p)\equiv u_2(x) F_1(x^p)\pmod{x^{(d-p)/2}}.\end{equation} Note that since $\deg u_1\le\floor{\frac r2},\deg u_2\le\floor{\frac{r-p}2}$, we have 
\begin{equation}\label{eq:deg_inequality}\deg(u_1(x)F_2(x^p))\le\floor{\frac r2}+mp,\quad
\deg(u_2(x)F_1(x^p))\le\left\lfloor{\frac{r-p}2}\right\rfloor+lp.\end{equation}
Now set $$l=\floor{\frac{\frac{d-p}2-1-\floor{\frac {r-p}2}}{p}},\quad m=\floor{\frac{\frac{d-p}2-1-\floor{\frac {r}2}}{p}}.$$ A simple calculation (using the fact that $d,p$ are odd and the assumption $r<d/2-p-1$) shows that $l+m\ge\floor{(d-p)/2p}$ and therefore there exist $F_1,F_2$ (not both zero) as above.

With our choice of $l,m$ and using (\ref{eq:deg_inequality}), both sides of the congruence (\ref{eq:congruence_uF}) have degrees strictly smaller than $(d-p)/2$, and we get that in fact an equality holds. Due to the primitivity of the vector $(u_1,u_2)$ we get that in fact $u_1=aF_2(x^p)$ and $u_2=aF_1(x^p)$ for some constant $0\neq a\in\F_q$. This implies (by (\ref{eq:fg1g2})) that $c$ is a $p$-th power, hence the number of possible $c$ is at most $q^{r/p}$, which implies the assertion since $\Lambda(c)\le r$.
\end{proof}

\begin{prop}[Diagonal estimate]\label{prop_ChebotarevApplication}
Let $r<d(1-1/p)$. Then $$\psi(r;A)=r\cdot\#\{c(x)\in\field_{q}[x]:c(x^2)\mbox{ irreducible},\deg c=r/2\}+O(q^{r/4})=\mathds{1}_{2|r}\cdot q^{r/2}+O(q^{r/4}).$$
\end{prop}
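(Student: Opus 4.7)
The plan is to convert $\psi(r;A)$ into a weighted count of monic prime powers in $\F_q[x^2]$ coprime to $x$, and then separate the irreducible contribution from the higher prime powers.

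Since $r < d(1-1/p) < \deg Q$ and $A$ consists of classes of polynomials in $\F_q[x^2]$ coprime to $x$, the congruence modulo $Q$ becomes an equality for degree reasons, so
\[
\psi(r;A) = \sum_{\substack{c \in \F_q[x^2]\text{ monic} \\ \deg c = r,\, c(0) \ne 0}} \Lambda(c),
\]
and only prime powers $c = P^k$ contribute. Next I would show that for $k \ge 2$, any prime power $P^k \in \F_q[x^2]$ with $P(0)\ne 0$ must in fact satisfy $P \in \F_q[x^2]$. Indeed, $P(-x)^k = P^k(-x) = P^k(x) = P(x)^k$, so by unique factorization $P(-x) = \zeta P(x)$ for some $\zeta \in \F_q^\times$; comparing leading coefficients (both polynomials are monic and $p>2$) forces $\zeta = (-1)^{\deg P}$, and an odd $\deg P$ would give $P(-x) = -P(x)$ and thus $x \mid P$, contradicting $P(0) \ne 0$. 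So $P = \tilde P(x^2)$ for a monic irreducible $\tilde P$ of degree $r/(2k)$, and bounding the number of such $\tilde P$ by $q^{r/(2k)}/(r/(2k))$ and weighting by $\Lambda(P^k) = r/k$, the total $k \ge 2$ contribution is at most
\[
\sum_{k \ge 2} \frac{r}{k}\cdot\frac{q^{r/(2k)}}{r/(2k)} \le 2\sum_{k \ge 2} q^{r/(2k)} = O(q^{r/4}),
\]
absorbed into the error. This leaves the $k=1$ contribution equal to $r$ times the number of monic irreducible $P \in \F_q[x]$ of degree $r$ with $P \in \F_q[x^2]$; writing $P = g(x^2)$ (with $g(0) \ne 0$ automatic for $r \ge 2$), this is exactly the first claimed equality.

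Finally, to obtain the second equality I evaluate $N := \#\{g \in \F_q[x]\text{ monic},\, \deg g = r/2,\, g(x^2)\text{ irreducible}\}$. If $r$ is odd, $N = 0$. For $r = 2m$ even, any such $g$ must itself be irreducible, and $g(x^2)$ is irreducible if and only if a root $\alpha$ of $g$ is not a square in $\F_{q^m}$ (so that $\sqrt\alpha$ lies in $\F_{q^{2m}} \setminus \F_{q^m}$ and has minimal polynomial of degree exactly $2m$). Since $q$ is odd, the squaring map on $\F_{q^m}^\times$ is $2$-to-$1$, so exactly $(q^m-1)/2$ elements are non-squares; subtracting those in proper subfields and dividing by the Galois orbit size $m$ yields $N = q^m/(2m) + O(q^{m/2}/m) = q^{r/2}/r + O(q^{r/4}/r)$, so $rN = q^{r/2} + O(q^{r/4})$, as required.

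The only slightly subtle step is the $x \mapsto -x$ symmetry argument that reduces prime powers in $\F_q[x^2]$ to genuine elements of $\F_q[x^2]$; every other ingredient is either a degree count or the elementary fact that half of $\F_{q^m}^\times$ is a non-square in odd characteristic.
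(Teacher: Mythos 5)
Your proof is correct and follows essentially the same route as the paper: reduce $\psi(r;A)$ (using $r<\deg Q$) to a von Mangoldt sum over monic prime powers in $\F_q[x^2]$, split off the irreducible contribution as the main term, and show the count of monic irreducibles of degree $r$ in $\F_q[x^2]$ is $q^{r/2}/r+O(q^{r/4}/r)$. The only differences are cosmetic: you obtain that count by the elementary non-square argument in $\F_{q^{r/2}}^\times$ rather than by Chebotarev for $\F_q(x)/\F_q(x^2)$ (an alternative the paper itself flags in a footnote), and you supply the $x\mapsto -x$ unique-factorization argument for the $O(q^{r/4})$ prime-power bound, which the paper asserts without detail.
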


\begin{proof}

The set of irreducible polynomials
$c\in\field_{q}[x]$ for which $c(x^{2})$ is irreducible in $\mathbb{F}_{q}[x]$
is precisely the set of inert primes in the extension $\field_{q}(x)/\field_{q}(x^{2})$ (we identify  prime ideals with their monic generators).

Note that $\mathrm{Gal}(\field_{q}(x)/\field_{q}(x^{2}))\cong \Z/2\Z$
and that the Frobenius element of a prime in this extension is nontrivial iff the
prime is inert. Chebotarev's density theorem \cite{Ros02}*{Theorem 9.13B} now implies\footnote{Here we could also just use the Prime Polynomial Theorem since (for $c$ irreducible) $c(x^2)$ is reducible iff $c(x^2)=h(x)h(-x)$ for an irreducible $h\in\F_q[x]$}
\begin{align*}
\#\{c & \in\field_{q}[x^2]\text{ is irreducible},\deg c=r\}=\#\{c(x^{2})\in\field_{q}[x^{2}]:c\text{ is inert,}\deg c=r/2\}\\
&=\frac{1}{2}\cdot\frac{q^{r/2}}{r/2}+O(q^{r/4}/r)=\frac{q^{r/2}}{r}+O(q^{r/4}/r),
\end{align*}
and the proposition follows (the contribution of prime powers to $\psi(r;A)$ is also $O(q^{r/4})$).
\end{proof}

\begin{proof}[Proof of Proposition \ref{prop_etaBound}]
Combine Proposition \ref{prop_nonChebotarevBound} and Proposition \ref{prop_ChebotarevApplication} (and recall that $p\ge 3$).
\end{proof}

\begin{proof}[Proof of Proposition \ref{prop_tracesMeanAsymptotic}] Combine (\ref{AppliedLemma2}) with Proposition \ref{prop_etaBound}.

\end{proof}

\subsection{1-level density for the odd polynomial family}
\label{sec:odd_density}
%

Let $\Phi\in\mathcal{S}(\R)$ be a fixed (Schwartz) test function with $\mathrm{supp}(\Phi)\subset(-1+1/p,1-1/p)$. For the remainder of the present section, the constants implicit in the asymptotic $O$-notation may depend on the test function $\Phi$, but no other parameter. In other words $O(\cdot)=O_\Phi(\cdot)$. We keep the notation $M_d^r=\tmean{T_f^r}_{f\in\AS_d^{0,\odd}}$ used throughout the present section. Recall that for $f\in\AS_d^{0,\odd}$ we have $\g(C_f)=(p-1)(d-1)/2$ and $L(u,f,\psi)$ has $d-1$ zeros. We denote by $\phi_{d-1}$ the periodic sampling function associated with $\Phi$, given by (\ref{eq:periodic}).

\begin{proof}[Proof of Theorem \ref{thm:main2}]
First we show that
\begin{equation}\label{eqn_FourierExpnansion}
\mean{\Wonefp}_{f\in\oasfam}=\hat{\Phi}(0)+\sum_{r=1}^{\infty}\left(\hat{\Phi}\left(\frac{r}{d-1}\right)+\hat{\Phi}\left(-\frac{r}{d-1}\right)\right)\frac{M_{d}^{r}}{d-1}.    
\end{equation}
Let $f\in\AS_d^{0,\odd}$ and $\rho_j=e^{2\pi i\theta_j}$ the normalized inverse zeros of $L(u,f,\psi)$. By Fourier expansion we have
$$\phi_{d-1}(\theta_{j})=\sum_{r=-\infty}^{\infty}\hat{\phi}_{d-1}(r)e^{2\pi ir\theta_j}=\sum_{r=-\infty}^{\infty}\frac 1{d-1}\hat{\Phi}\left(\frac r{d-1}\right)e^{2\pi ir\theta_j},$$ 
since $\hat{\phi}_{d-1}(r)=\frac{1}{d-1}\hat{\Phi}\left(\frac{r}{d-1}\right)$.
Hence $$\Wonefp=\sum_{r=-\infty}^{\infty}\frac{1}{d-1}\hat{\Phi}\left(\frac{r}{d-1}\right)T_{f}^{r}.$$
Averaging over $\AS_d^{0,\odd}$ and noting that (by Lemma \ref{lem:real}) $M_{d}^{r}=\overline{M_{d}^{r}}=M_{d}^{-r}$
we get \eqref{eqn_FourierExpnansion}. Let $\delta>0$ be such that $\mathrm{supp}(\Phi)\subset(-1+1/p+\delta,1-1/p-\delta)$. Then by (\ref{eqn_FourierExpnansion}) and Proposition \ref{prop_tracesMeanAsymptotic} we have for $d\ge d_0(\delta)$ large enough
\begin{multline*}
\mean{\Wonefp}_{f\in\oasfam}  =\hat{\Phi}(0)+\sum_{r=1}^{(1-1/p-\delta)d}\left(
\hat{\Phi}\left(
-\frac{r}{d-1}\right)
+\hat{\Phi}\left(
\frac{r}{d-1}\right)
\right)\frac{M_{d}^{r}}{d-1}\\
=\hat{\Phi}(0)+\frac{1}{d-1}\cdot\sum_{r=1}^{(1-1/p-\delta)d}\left(
\hat{\Phi}\left(
-\frac{r}{d-1}\right)
+\hat{\Phi}\left(
\frac{r}{d-1}
\right)\right)\left(-\mathds{1}_{2|r}+O\left(rq^{-r/6+3\cdot \mathds 1_{r>d/4-1}}+rq^{-\delta d/2+3}\right)\right)\\
=\hat{\Phi}(0)-\sum_{r=1}^{(1-1/p-\delta)d}\frac{\mathds{1}_{2|r}}{d-1}\left(
\hat{\Phi}\left(
-\frac{r}{d-1}
\right)+\hat{\Phi}\left(
\frac{r}{d-1}\right)
\right)+O\left(\frac 1d\right).
\end{multline*}

Note that on the other hand for $1\le r\le d-1$,
$$\int_{U\in\USp(d-1)}\tr (U^r)\mathrm d U=-\mathds{1}_{2|r}$$
(see \cite{DiSh94}*{Theorem 6}) and a similar calculation to (\ref{eqn_FourierExpnansion}) shows
\begin{multline*}
\mean{W_1(U,\Phi)}_{U\in \mathrm{USp}(d-1)}=\sum_{r=-\infty}^{\infty}\frac{1}{d-1}\hat{\Phi}\left(
\frac{r}{d}\right)\int_{\mathrm{USp}(d-1)}\tr(U^{r})\mathrm dU\\=\hat{\Phi}(0)-\sum_{r=1}^{(1-1/p-\delta)d}\frac{\mathds{1}_{2|r}}{d-1}\left(
\hat{\Phi}\left(
-\frac{r}{d-1}
\right)+\hat{\Phi}\left(
\frac{r}{d-1}\right)
\right)+O\left(\frac 1d\right)=\mean{\Wonefp}_{f\in\oasfam}+O\left(\frac 1d\right).$$
\end{multline*}
Taking $d\to\infty$ and using (\ref{1levelUSp}) we obtain Theorem \ref{thm:main2}.
\end{proof}

\section{The ordinary A-S family}\label{sec:ord_as_fam}

In the present section we prove Theorem \ref{thm:main3}. Throughout the section we use the notation from sections \ref{sec:ordas_to_dir} and \ref{sec:avtraces}.

Let $d$ be a natural number, $g\in\F_q[x]$ a monic squarefree polynomial of degree $d$ or $d-1$. For each of the families $\mathcal F=\AS^\ord_{d,g},\H_g$ ($\mathcal H_g$ is defined by (\ref{eq:def_Hg})) and $r\in\Z$ we denote the trace average
$$M^r_{\mathcal F}=\mean {T^r_{f}}_{f\in\mathcal F},$$ where $T^r_f=\sum_{i=1}^{2d-2}\rho_i(f,\psi)^r$, $\rho_i(f,\psi)$ being the normalized inverse zeros of $L(u,f,\psi)$. Our strategy for estimating $M^r_{\AS^\ord_{d,g}}$ will be by reducing the problem to estimating $M^r_{\mathcal H_g}$ (this will only work directly for $\deg g=d$ and slight modifications to our arguments will be necessary for the case $\deg g=d-1$). Theorem \ref{thm:main3} will be derived from these estimates.

\subsection{Estimating means of traces for the family $\hg$}
\label{sec:hg}

Assume $g$ is squarefree with $\deg g=d$ and denote by 
$$H=\left\{\chi\in(F_q[x]/g^2)^{\times*}:\chi^p=1\right\}$$ the group of order $p$ Dirichlet characters modulo $g^2$.
By Proposition \ref{prop_lhg_is_dirichlet_l} combined with Proposition \ref{prop_distinct_characters}  we have (using the notation (\ref{eq:def_mr})) $M^r_{\mathcal H_g}=M^r_H$.

We recall from section \ref{sec:avtraces} that for a group of Dirichlet characters $H$ modulo $g^2$, and $Q|g^2$ a monic divisor, we denote by $H_{Q}\subset H$ the subgroup of Dirichlet characters in $H$ with period $Q$.

\begin{prop}\label{prop_inclusionExclusionMgr}
Let $r$ be a positive integer. Then for $r>0$,
$$M^r_{\mathcal H_g}=M_{H}^r=-q^{-r/2}\left(
1+\sum_{Q|g}\mu(g/Q)\frac{q^{\deg Q}}{\varphi(g)}\psi\left(r; (\F_q[x]/gQ)^{\times p}\right)
\right),$$
where $\varphi(g)=\#(\F_q[x]/g)^\times$ is the Euler totient function and $\psi(r;*)$ is defined by (\ref{eq:chebychev}).
\end{prop}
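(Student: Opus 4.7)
The plan is to apply Lemma~\ref{dirCharacterAvgLemma} directly with $Q=g^2$ and then simplify the resulting sum over divisors using the information about the subgroups $H_{Q'}$ and the M\"obius function that is already implicit in the proof of Proposition~\ref{prop_distinct_characters}. The identification $M^r_{\mathcal H_g}=M^r_H$ is immediate from Proposition~\ref{prop_lhg_is_dirichlet_l} combined with Proposition~\ref{prop_distinct_characters} (and the fact that the single factor $(1-u)$ contributes $-q^{-r/2}$ to $T^r_{\chi_f}$, which is absorbed in the main formula of Lemma~\ref{dirCharacterAvgLemma}).

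The first substantive step is a M\"obius-support argument. Since $g=\prod_{i=1}^l P_i$ is squarefree, a divisor $Q'\mid g^2$ has the form $Q'=\prod P_i^{f_i}$ with $f_i\in\{0,1,2\}$, and $g^2/Q'=\prod P_i^{2-f_i}$ is squarefree (hence $\mu(g^2/Q')\ne 0$) if and only if each $f_i\in\{1,2\}$. Equivalently $g\mid Q'$, so we may write $Q'=gQ$ with $Q\mid g$ and $\mu(g^2/Q')=\mu(g/Q)$. This reduces the sum over $Q'\mid g^2$ appearing in Lemma~\ref{dirCharacterAvgLemma} to a sum over squarefree $Q\mid g$.

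The second step is to compute $\#H_{gQ}$ and to describe $H_{gQ}^\perp$. A character of order dividing $p$ on $(\F_q[x]/g^2)^\times$ has period dividing $gQ$ exactly when it factors through the quotient $(\F_q[x]/gQ)^\times$, so $\#H_{gQ}$ equals the number of order-$p$ (or trivial) characters of this group. Exactly as in the inclusion-exclusion calculation inside the proof of Proposition~\ref{prop_distinct_characters}, the $p$-part of $(\F_q[x]/P_i^{f_i})^\times$ is trivial when $f_i=1$ and has order $q^{\deg P_i}$ when $f_i=2$, whence $\#H_{gQ}=q^{\deg Q}$. Moreover $H_{gQ}$ is precisely the full group of $p$-torsion characters of $(\F_q[x]/gQ)^\times$, so its orthogonal is the image of multiplication by $p$, namely $(\F_q[x]/gQ)^{\times p}$; under reduction modulo $g^2$ this is the same subgroup (for purposes of the sum $\psi(r;\cdot)$ over monic polynomials) as $H_{gQ}^\perp\subset(\F_q[x]/g^2)^\times$.

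Combining these inputs with $\#H^\pr=\varphi(g)$ (again from Proposition~\ref{prop_distinct_characters}) and substituting into Lemma~\ref{dirCharacterAvgLemma} yields the formula. I expect no real obstacle here; the only point that requires some care is the interpretation of $H_{gQ}^\perp$ in the appropriate ambient group, and the bookkeeping that translates divisors of $g^2$ with nonzero M\"obius contribution into divisors of $g$.
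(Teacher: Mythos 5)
Your proposal is correct and follows essentially the same route as the paper: apply Lemma~\ref{dirCharacterAvgLemma} with modulus $g^2$, observe that the nonvanishing M\"obius terms are exactly those with $Q'=gQ$, $Q\mid g$, and $\mu(g^2/Q')=\mu(g/Q)$, and then substitute $\#H^\pr=\varphi(g)$, $\#H_{gQ}=q^{\deg Q}$ and the identification of $H_{gQ}^\perp$ with the $p$-th powers modulo $gQ$, all of which are extracted from the proof of Proposition~\ref{prop_distinct_characters} exactly as you do. The one point you flag as needing care (interpreting $H_{gQ}^\perp$ in the right ambient group) is handled in the paper by the same remark you make, namely that $(F,g^2)=1$ iff $(F,gQ)=1$.
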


\begin{proof} Since $g$ is squarefree, by Lemma \ref{dirCharacterAvgLemma} applied to $H$ we have (the terms with $\mu(g^2/Q')\neq 0$ have the form $Q'=gQ$ with $Q|g$)
\begin{multline*}
M_{H}^r=-q^{-r/2}
\left(
1+\sum_{Q'|g^2}\mu(g^2/Q')\frac{\#H_{Q'}}{\#\prgrp{H}}\psi(r; H_{Q'}^\perp)
\right)=-q^{-r/2}\left(
1+\sum_{Q|g}\mu(g/Q)\frac{\#H_{gQ}}{\#\prgrp{H}}\psi\left(r; H_{gQ}^\perp\right)
\right),
\end{multline*}
Now the proposition follows from the elementary facts (see proof of Proposition \ref{prop_distinct_characters}) $\#H^\pr=\varphi(g),\#H_{gQ}=q^{\deg Q}$ and
$$H_{gQ}^\perp=\left\{F\bmod g^2: F\bmod gQ\in(\F_q[x]/gQ)^{\times p}\right\}$$
(note that $(F,g^2)=1$ iff $(F,gQ)=1$).
\end{proof}

Hence we turn our attention to bounding $\psi\left(r;(\F_q[x]/gQ)^{\times p}\right)$ for $Q|g$. The key observation is that if $c\in\F_q[x]$ is a $p$-th power modulo $gQ$ then writing $c=u^p+UQg$ and taking derivatives we get that $c'=U'Qg+UQ'g+UQg'$ is divisible by $Q$ (since $Q|g$). This will allow us to give a good bound on the number of suitable $c$ of given degree. Our method crucially relies on this special property of the group $(\F_q[x]/g^2)^{\times p}$ and we would not be able to obtain a comparable result for a general group of Dirichlet characters of similar size in terms of their (common) modulus. The next lemma contains the key estimate:

\begin{lem}\label{claim_bounding_number_of_derivatives}
Let $Q$ be a squarefree polynomial. We have
$$\#\{c\in\field_q[x]: \deg c\le r,Q|c'\}\le 
\left[\begin{array}{ll}q^{\frac r2+\left(\frac 12-\frac 1p\right)(r-2\deg Q)+2p},& \deg Q<r<2\deg Q,\\ q^{\frac r2+\frac 12(r-2\deg Q)+2p},&r\ge 2\deg Q.\end{array}\right.$$
\end{lem}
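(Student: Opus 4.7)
My plan is to decompose $c$ via its pure $p$-th-power part, translate the divisibility $Q\mid c'$ into a single linear equation over $\F_q[y]$ (with $y=x^p$), and bound the number of short solutions using the $\F_q[x]$-lattice theory from Section~\ref{sec:lattice}.

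First, I will write $c = u^p + v$ uniquely with $u\in\F_q[x]$ and $v$ having no monomials of degree divisible by $p$; then $c'=v'$ and $\deg c\le r$ is equivalent to $\deg u\le r/p$ (contributing $q^{\lfloor r/p\rfloor+1}$ choices) together with $\deg v\le r$. The map $v\mapsto v'$ is a bijection onto polynomials $w$ of degree $\le r-1$ whose coefficients vanish at positions $\equiv p-1\pmod p$, so I instead count such $w=Qh$ with $h\in\F_q[x]$, $\deg h\le r-1-m$. Using the $\F_q[y]$-module decomposition $\F_q[x]=\bigoplus_{i=0}^{p-1}x^i\F_q[y]$ and writing $Q=\sum_i x^iQ_i(y)$, $h=\sum_j x^jh_j(y)$, a short calculation shows the $x^{p-1}$-component of $Qh$ equals $\sum_{j=0}^{p-1}R_j(y)h_j(y)$ with $R_j:=Q_{p-1-j}$. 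Thus the condition on $w$ becomes the single $\F_q[y]$-equation $\sum_j R_jh_j=0$.

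A key structural fact is that since $Q$ is squarefree, $\gcd(R_0,\ldots,R_{p-1})=1$ in $\F_q[y]$: any common divisor $G(y)$ would yield $G(x^p)\mid Q$, and since $\F_q$ is perfect $G(x^p)$ is a $p$-th power in $\F_q[x]$, forcing $G$ to be constant. Moreover, choosing $j^*=p-1-(m\bmod p)$ captures the leading term of $Q$ and gives $\deg R_{j^*}=\lfloor m/p\rfloor$. The solution module $\Lambda\subset\F_q[y]^p$ has rank $p-1$; projecting away the $j^*$-th coordinate embeds it as a full-rank lattice $\Gamma\subset\F_q[y]^{p-1}$ defined by the congruence $R_{j^*}\mid\sum_{j\ne j^*}R_jh_j$, and the coprimality $\gcd_j R_j=1$ makes the reduction mod $R_{j^*}$ surjective, yielding $\vol(\Gamma)=q^{\lfloor m/p\rfloor}$ by the same argument as Lemma~\ref{lem:lin_eqn_lattice_vol}(i).

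Setting $s=\lfloor(r-1-m)/p\rfloor$ and using that $\deg h\le r-1-m$ forces $\deg h_j\le s$ for all $j$, the number of admissible $v$ is bounded by $\#\{v\in\Gamma:\deg v\le s\}$. Applying Lemma~\ref{lem:bound_short_vectors} with $n=p-1$ and the trivial $\mu\ge 0$ gives
\[
\#\{v\in\Gamma:\deg v\le s\}\le q^{\max(0,\,(p-1)(s+1)-\lfloor m/p\rfloor,\,(p-2)(s+1))}.
\]
Multiplying by the $q^{\lfloor r/p\rfloor+1}$ factor for $u$, the middle term dominates when $r\ge 2m$ and yields a bound essentially $q^{r-m+O(p)}$, inside the claimed $q^{r-m+2p}$; the third term dominates when $m<r<2m$ and yields a bound essentially $q^{r-r/p-m+2m/p+O(p)}$, inside the claimed $q^{r/2+(1/2-1/p)(r-2m)+2p}$. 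The slack of $2p$ in each case generously absorbs the $O(p)$ error coming from floor functions and from using $\mu\ge 0$.

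The main obstacle will be the careful bookkeeping of the case split and the floor functions to ensure the exponents land inside the claimed bounds, together with justifying the volume identity $\vol(\Gamma)=q^{\lfloor m/p\rfloor}$ (a $(p-1)$-variable generalization of Lemma~\ref{lem:lin_eqn_lattice_vol}(i)). The conceptual heart of the proof is extracting from squarefreeness of $Q$ the coprimality $\gcd_j R_j=1$, which is exactly what guarantees that $\Gamma$ has the maximum possible volume $q^{\lfloor m/p\rfloor}$ so that Lemma~\ref{lem:bound_short_vectors} supplies the optimal exponents.
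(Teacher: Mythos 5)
Your argument is correct, and it follows the same overall strategy as the paper's proof: split off the $p$-th-power part of $c$ (contributing the factor $q^{\lfloor r/p\rfloor+1}$), reduce to counting short vectors in a rank-$(p-1)$ lattice over $\F_q[x^p]$, extract coprimality of the $\F_q[x^p]$-components of $Q$ from squarefreeness, and finish with Lemma~\ref{lem:bound_short_vectors}. The genuine difference is the parametrization of the lattice. The paper counts the derivatives $c'$ themselves, i.e.\ works with $\Lambda_Q=Q\F_q[x]\cap V$ where $V=\bigoplus_{i=0}^{p-2}x^i\F_q[x^p]$; squarefreeness gives $V+Q\F_q[x]=\F_q[x]$ and hence $\vol(\Lambda_Q)=q^{\deg Q}$, and the case $\deg Q<r<2\deg Q$ is then handled by feeding the nontrivial lower bound $\mu>\deg Q/p-1$ on the minimal degree in $\Lambda_Q$ into Lemma~\ref{lem:bound_short_vectors}. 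You instead count the quotients $h=c'/Q$, which turns the condition into the single $\F_q[y]$-relation $\sum_jR_jh_j=0$ and, after projecting away the coordinate $j^*$ carrying the leading term of $Q$, into a congruence lattice of volume $q^{\lfloor m/p\rfloor}$ to which the trivial bound $\mu\ge0$ suffices. The two lattices are related by multiplication by $Q$, so the information the paper extracts from $\mu$ you have already spent by shifting the degree cutoff from $r-1$ down to $r-1-m$; the exponents come out the same up to the $O(p)$ slack, and I have checked that your floor-function bookkeeping lands inside the stated $+2p$ in both ranges. What your route buys is that only the trivial minimal-degree bound is needed; what it costs is the extra care in choosing $j^*$ so that $\deg R_{j^*}=\lfloor m/p\rfloor$ and in justifying the $(p-1)$-variable analogue of Lemma~\ref{lem:lin_eqn_lattice_vol}(i) via surjectivity of reduction mod $R_{j^*}$ (which your coprimality observation does supply). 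The paper's index computation $V/\Lambda_Q\cong\F_q[x]/Q$ is arguably a little slicker, but both are complete.
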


The proof of the lemma (to be given below) is based on viewing the set $\{c':c\in\F_q[x]\}\cap Q\F_q[x]$ as a lattice over the ring $\F_q[x^p]$ and bounding the number of short vectors in this lattice. Denote $R=\field_q[x^p]$. Consider the (free) $R$-module
$$
V=\{c':c\in\F_q[x]\}=R\oplus xR \oplus x^2 R \oplus \ldots \oplus x^{p-2} R\subset\F_q[x].
$$
For a vector $v=\sum_{i=0}^{p-2}a_i(x^p)x^i\in V$ we define its degree to be $\deg_R{v}=\max(\deg a_i)$. We will denote by $\deg v=\deg_x v$ the degree of $v$ as a polynomial in $\F_q[x]$. Note that we always have the inequalities \begin{equation}\label{eq:deg_inequality1} \frac{\deg_xv}p-1<\deg_R(v)\le\frac{\deg_xv}p.\end{equation} For a squarefree polynomial $Q$ consider the $R$-lattice $\Lambda_Q=Q\field_q[x]\cap V$.

\begin{lem}
Assume $Q$ is a squarefree polynomial. As an $R$-lattice in $V$, $\Lambda_Q$ has volume $\vol(\Lambda_Q)=q^{\deg{Q}}$.
\end{lem}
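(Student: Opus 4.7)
The plan is to interpret the volume as the index $[V:\Lambda_Q]$ and identify $V/\Lambda_Q$ with a subspace of $\F_q[x]/Q\F_q[x]$. First I would apply the second isomorphism theorem to the inclusion $V\hookrightarrow\F_q[x]$, which gives
$V/\Lambda_Q = V/(V\cap Q\F_q[x]) \cong (V+Q\F_q[x])/Q\F_q[x]$,
an $\F_q$-subspace of $\F_q[x]/Q\F_q[x]$. Since the latter has $\F_q$-dimension exactly $\deg Q$, everything will reduce to proving the surjectivity statement $V+Q\F_q[x]=\F_q[x]$.

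The key step will be to exploit the defining description of $V$ as the image of the derivation $d\colon\F_q[x]\to\F_q[x]$. For any $f\in\F_q[x]$, the product rule gives $(Qf)' = Q'f+Qf'\in V$, and rearranging shows $Q'f = (Qf)' - Qf' \in V+Q\F_q[x]$. Letting $f$ range over $\F_q[x]$ yields $Q'\F_q[x]\subseteq V+Q\F_q[x]$, and combined with the obvious $Q\F_q[x]\subseteq V+Q\F_q[x]$ this gives $Q\F_q[x]+Q'\F_q[x]\subseteq V+Q\F_q[x]$.

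To finish, I would invoke the squarefreeness of $Q$: because $\F_q$ is a perfect field, every squarefree polynomial in $\F_q[x]$ is separable, so $\gcd(Q,Q')=1$ and by B\'ezout $Q\F_q[x]+Q'\F_q[x]=\F_q[x]$. This forces $V+Q\F_q[x]=\F_q[x]$, and tracing back through the isomorphism yields $\vol(\Lambda_Q)=|V/\Lambda_Q|=|\F_q[x]/Q\F_q[x]|=q^{\deg Q}$. I do not expect any serious obstacle here: the only real content is the observation $(Qf)'\in V$, which immediately converts the problem into the standard coprimality $\gcd(Q,Q')=1$ for squarefree $Q$ over a perfect base field.
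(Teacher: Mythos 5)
Your proof is correct, and the key step is handled by a genuinely different argument from the paper's. Both proofs begin identically: volume equals the index $[V:\Lambda_Q]$, the second isomorphism theorem gives $V/\Lambda_Q\cong (V+Q\F_q[x])/Q\F_q[x]$, and everything reduces to the surjectivity claim $V+Q\F_q[x]=\F_q[x]$. For that claim the paper proceeds by an explicit decomposition: it writes $Q=\sum_{i=0}^{p-1}x^ia_i(x^p)$, multiplies by $x^{p-1-i}$ and subtracts elements of $V$ to land each $a_i(x^p)x^{p-1}$ in $V+Q\F_q[x]$, and then uses squarefreeness in the form $\gcd(a_0,\dots,a_{p-1})=1$ (a common factor would force a $p$-th power to divide $Q$) together with a B\'ezout combination over $R=\F_q[x^p]$ to capture $x^{p-1}$. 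You instead exploit the description of $V$ as the image of $d/dx$ via the Leibniz rule, $(Qf)'=Q'f+Qf'$, to show $Q'\F_q[x]\subseteq V+Q\F_q[x]$, and then use squarefreeness in its most classical form $\gcd(Q,Q')=1$ (valid since $\F_q$ is perfect, so squarefree implies separable). Your route is shorter, avoids the coordinate decomposition of $Q$ entirely, and makes the role of the squarefree hypothesis maximally transparent; the paper's version has the minor virtue of working directly with the module structure over $R$ that is used in the surrounding lemmas, but the two arguments deliver exactly the same conclusion with comparable rigor.
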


\begin{proof}
We want to calculate $[V:\Lambda_Q]$. By the second isomorphism theorem for $R$-modules,
$$
V/\Lambda_Q\cong (V+Q\field_q[x])/Q\field_q[x].
$$
We show that $V+Q\field_q[x]=\field_q[x]$ and it would follow that $[V:\Lambda_Q]=q^{\deg{Q}}$. 

Write 
\begin{equation}\label{eq:Q_identity}Q=a_0(x^p)+xa_1(x^p)+\ldots+x^{p-1}a_{p-1}(x^p),a_i\in\F_q[x].\end{equation}
Fix $0\le i\le p-1$. Multiplying (\ref{eq:Q_identity}) by $x^{p-1-i}$ and subtracting polynomials from $V$ we get that the polynomial $a_i(x^p)x^{p-1}$ is in $V+Q\field_q[x]$. Now we note that $\gcd(a_0,\ldots,a_{p-1})=1$, since otherwise $Q$ is not squarefree. Consequently there are $c_0,\ldots,c_{p-1}\in\F_q[x]$ such that $\sum_{i=0}^{p-1} c_i a_i=1$. Hence
$$x^{p-1}=\sum_{i=0}^{p-1}c_i(x^p)a_i(x^p)x^{p-1}\in V+Q\F_q[x]$$
(since the latter is an $\F_q[x^p]$-module).
Now since $1,x,\ldots,x^{p-2}\in V$, we have that $1,x,\ldots,x^{p-1}\in V+Q\F_q[x]$. But this is a basis of $\F_q[x]$ as an $R$-module and we have $V+Q\F_q[x]=\F_q[x]$ as needed.
\end{proof}

\begin{lem}\label{prop_shortbasis_count_lambdaQ} For $Q$ squarefree,
$$\#\{v\in\Lambda_Q:\deg_x v\le r-1\}\le \left[\begin{array}{ll}q^{\left(1-\frac 2p\right)r-\left(1-\frac 2p\right)(\deg Q)+2p-1},&\deg Q<r<2\deg Q,\\ q^{\left(1-\frac 1p\right)r-\deg Q+2p-1},&r\ge2\deg Q.\end{array}\right.$$
\end{lem}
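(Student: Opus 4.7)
The plan is to apply Lemma \ref{lem:bound_short_vectors} to $\Lambda_Q$ viewed as a lattice over the polynomial ring $R=\field_q[x^p]$. The preceding lemma shows $\vol_R(\Lambda_Q)=q^{\deg Q}$, and since $V/\Lambda_Q$ is finite over $\field_q$ the $R$-lattice $\Lambda_Q$ has full $R$-rank $p-1$ inside $V\cong R^{p-1}$. Hence Lemma \ref{lem:bound_short_vectors} applies with $n=p-1$ and $m=\deg Q$.

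First I would translate the hypothesis $\deg_x v\le r-1$ into an $R$-degree bound. For $v=\sum_{i=0}^{p-2}a_i(x^p)x^i\in V$ the two degrees are related by $p\deg_R v\le\deg_x v\le p\deg_R v+(p-2)$. So $\deg_x v\le r-1$ implies $\deg_R v\le s:=\lfloor(r-1)/p\rfloor$, reducing the task to bounding $\#\{v\in\Lambda_Q:\deg_R v\le s\}$. The right inequality, together with $\deg_x v\ge\deg Q$ for every nonzero $v\in\Lambda_Q$ (which holds since $Q\mid v$ in $\field_q[x]$), gives the minimum-degree estimate $\mu_R:=\min_{0\ne v\in\Lambda_Q}\deg_R v\ge(\deg Q-p+2)/p$.

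Feeding these inputs into Lemma \ref{lem:bound_short_vectors} yields an upper bound of the form $q^{\max(0,A,B)}$ where $A=(p-1)(s+1)-\deg Q$ and $B=(p-2)(s+1-\mu_R)$. Substituting $s\le(r-1)/p$ and the lower bound on $\mu_R$ simplifies to
$$A\le(1-1/p)r-\deg Q+(p-1),\qquad B\le(1-2/p)(r-\deg Q)+(2p-5),$$
both comfortably bounded by the claimed exponents. Viewed as linear functions of $r$, the slopes of $A$ and $B$ differ by $1/p$ and the two expressions agree at $r=2\deg Q$. Consequently $A$ dominates for $r\ge 2\deg Q$, giving the second case of the lemma, while $B$ dominates for $\deg Q<r<2\deg Q$, giving the first case.

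The conceptual substance is the identification of $\Lambda_Q$ as an $R$-lattice of rank $p-1$ and volume $q^{\deg Q}$, after which Lenstra's theory (packaged in Lemma \ref{lem:bound_short_vectors}) delivers the bound mechanically. The only genuine technical point is the additive constant $2p-1$: it must absorb the slack coming from the floor in $s$, the implicit ceiling in $\mu_R$, and the gap of up to $p-2$ between $\deg_x$ and $p\cdot\deg_R$. Verifying this is an elementary arithmetic check, and is the step where the bookkeeping must be done most carefully.
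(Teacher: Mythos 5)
Your proposal is correct and follows essentially the same route as the paper: identify $\Lambda_Q$ as a rank-$(p-1)$ $R$-lattice of volume $q^{\deg Q}$, convert $\deg_x v\le r-1$ into $\deg_R v\le (r-1)/p$ and $\mu_R>\deg Q/p-1$ via the two-sided comparison of $\deg_x$ and $\deg_R$, and feed these into Lemma \ref{lem:bound_short_vectors} with $n=p-1$, $m=\deg Q$, after which the two cases come from comparing the exponents $A$ and $B$ across the crossover at $r=2\deg Q$. The arithmetic absorbing the slack into the constant $2p-1$ checks out, so nothing further is needed.
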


\begin{proof}
First note that for any $0\neq v\in\Lambda_Q$ we have $\deg_R(v)>\frac{\deg v}{p}-1\ge\frac{\deg Q}{p}-1$ (from (\ref{eq:deg_inequality1})). Now 
applying Lemma \ref{lem:bound_short_vectors} to $\Lambda_Q$ (with $n=p-1$, $s=(r-1)/p$, $\mu=
\min_{0\neq v\in\Lambda_Q}\deg_R v>\frac{\deg Q}p-1$) we obtain (once again using (\ref{eq:deg_inequality1}))
\begin{multline*}\#\{v\in\Lambda_Q:\deg_x v\le r-1\}\le\#\{v\in\Lambda_Q:\deg_Rv\le (r-1)/p\}\le\\
\le q^{\max\left(0,(p-1)(\frac rp+1)-\deg Q,(p-2)(\frac rp+2-\frac{\deg Q}p)\right)}\le q^{\max\left(0,\left(1-\frac 1p\right)r-\deg Q+2p-1,\left(1-\frac 2p\right)r-\left(1-\frac 2p\right)(\deg Q)+2p-1\right)},\end{multline*}
which implies the assertion.
\end{proof}

\begin{proof}[Proof of Lemma \ref{claim_bounding_number_of_derivatives}]
We note that the conditions $\deg c\le r,Q|c'$ are equivalent to $c'\in\Lambda_Q$. Since the kernel of the map $c\mapsto c'$ consists of the polynomials lying in $R=\F_q[x^p]$, any fixed value of $c'$ with $\deg c'\le r-1$ corresponds to at most $q^{r/p+1}$ values of $c$. Combining this observation with Lemma \ref{prop_shortbasis_count_lambdaQ} gives Lemma \ref{claim_bounding_number_of_derivatives}.\end{proof}

\begin{lem}\label{lem_psiHgBound}
Let $g$ be squarefree, $Q|g$. Then 
\begin{equation}\label{eq:psiHgBound}\psi\left(r;(\F_q[x]/gQ)^{\times p}\right)\le \left[\begin{array}{ll}q^{r/p},&r\le\deg Q,
\\rq^{\frac r2+\left(\frac 12-\frac 1p\right)(r-2\deg Q)+\min(2p,r/p)},&\deg Q<r<2\deg Q,\\
rq^{\frac r2+\frac 12(r-2\deg Q)+\min(2p,r/p)},&r\ge 2\deg Q.\end{array}\right.\end{equation}

\end{lem}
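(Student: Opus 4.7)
The plan is to exploit the observation that any $c \in \F_q[x]$ whose reduction modulo $gQ$ lies in $(\F_q[x]/gQ)^{\times p}$ automatically satisfies $Q \mid c'$, reducing the lemma to a counting problem that Lemma \ref{claim_bounding_number_of_derivatives} is designed to handle.

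First I would establish the derivative identity. Writing $c \equiv u^p \pmod{gQ}$ as $c = u^p + gQk$ with $u, k \in \F_q[x]$ and differentiating in characteristic $p$, one gets $c' = (gQk)' = (gQ)' k + gQ k'$. Because $Q \mid g$, set $h = g/Q \in \F_q[x]$, so that $gQ = Q^2 h$ and $(gQ)' = 2QQ'h + Q^2 h' = Q(2Q'h + Qh')$; hence $Q \mid c'$. This is the main structural input that connects the present lemma to the lattice-theoretic counting of Section \ref{sec:lattice}.

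For the range $r \le \deg Q$, the inequality $\deg c' \le r-1 < \deg Q$ together with $Q \mid c'$ forces $c' = 0$, so $c \in \F_q[x^p]$. Since $\F_q$ is perfect of characteristic $p$, this means $c = u^p$ for some monic $u \in \F_q[x]$ of degree $r/p$ (no solutions unless $p \mid r$). Combining $\Lambda(u^p) = \Lambda(u)$ on prime powers $u$ with the prime polynomial theorem gives $\psi(r;\cdot) \le \sum_{\deg u = r/p} \Lambda(u) = q^{r/p}$, which is exactly the first claimed bound.

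For the two remaining ranges $r > \deg Q$, I would use $\Lambda(c) \le r$ and apply Lemma \ref{claim_bounding_number_of_derivatives} to bound $\#\{c \text{ monic of degree } r : Q \mid c'\}$; this directly produces the asserted estimates, but with $2p$ in place of $\min(2p, r/p)$ in the exponent. To obtain the alternative $r/p$ branch and hence the minimum, I would revisit the interior of Lemma \ref{claim_bounding_number_of_derivatives}, whose proof factors the count as the product of the kernel of differentiation on polynomials of degree $\le r$ (of size $q^{r/p+1}$) and the short-vector count $\#\{v \in \Lambda_Q : \deg v \le r-1\}$. In the regime where the ``dense'' branch $n(s+1) - m$ of Lemma \ref{lem:bound_short_vectors} is active rather than the ``sparse'' branch $(n-1)(s+1-\mu)$ used in Lemma \ref{prop_shortbasis_count_lambdaQ}, the additive slack in the exponent becomes of order $r/p$ rather than $2p$. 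Taking the pointwise better of the two estimates then gives $\min(2p, r/p)$. The main technical obstacle is bookkeeping: one must check that the dense-branch computation produces an exponent of precisely the claimed shape $r/2 + (1/2 - 1/p)(r - 2\deg Q) + r/p$ (and its counterpart for $r \ge 2\deg Q$) without spurious additive corrections, so that the two routes literally combine into $\min(2p, r/p)$.
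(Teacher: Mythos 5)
Your derivative identity, your treatment of the range $r\le\deg Q$, and your use of Lemma \ref{claim_bounding_number_of_derivatives} together with $\Lambda(c)\le r$ to get the $2p$ branch all match the paper's proof (the paper first passes from modulus $gQ$ to $Q^2$ and differentiates $c-u^p=UQ^2$, but this is the same computation as your $gQ=Q^2h$ bookkeeping). The gap is in your route to the $r/p$ branch of the minimum. First, Lemma \ref{lem:bound_short_vectors} bounds the short-vector count by the \emph{maximum} of its branches, so you are not free to "activate" the dense branch $n(s+1)-m$ when the sparse branch is larger -- and in the range $\deg Q<r<2\deg Q$ the sparse branch is the dominant one, so the lattice route is stuck with slack of order $2p$ there. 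Second, even where the dense branch does dominate ($r\ge 2\deg Q$), its contribution is $n(s+1)-m=(p-1)\bigl(\frac{r-1}{p}+1\bigr)-\deg Q=(1-\frac1p)r-\deg Q+p-2+\frac1p$; after multiplying by the kernel factor $q^{r/p+1}$ the exponent is $r-\deg Q+p-1+\frac1p$, i.e.\ the main term plus roughly $p$, \emph{not} plus $r/p$. The additive $p$ comes from the $n=p-1$ coordinates each contributing $+1$ in $n(s+1)$, and it does not shrink with $r$. Since the $r/p$ branch of the minimum is exactly what is needed when $r<2p^2$, and $p>r/p$ whenever $r<p^2$, your computation fails to deliver the claimed bound for $\deg Q<r<p(p-1)$ (roughly). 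So the "bookkeeping" you flag as the main obstacle is not mere bookkeeping: the dense-branch exponent genuinely has the wrong shape.

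What the paper does instead for the $r/p$ branch is to abandon the lattice machinery entirely: if $2p>r/p$, one simply observes that $c'$ is a multiple of $Q$ of degree $\le r-1$, of which there are exactly $q^{r-\deg Q}$, and each admits at most $q^{r/p+1}$ preimages under differentiation; the monicity of $c$ saves one power of $q$, giving $\#\{c\}\le q^{r-\deg Q+r/p}$. One then checks $r-\deg Q\le\frac r2+\bigl(\frac12-\frac1p\bigr)(r-2\deg Q)$ for $\deg Q<r\le 2\deg Q$ (the difference is $\frac{r-2\deg Q}{p}\le 0$) and equality for $r\ge 2\deg Q$, so this crude count lands inside the stated bound with additive term $r/p$. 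Taking the better of this and Lemma \ref{claim_bounding_number_of_derivatives} yields $\min(2p,r/p)$. You should replace your dense-branch argument with this elementary count.
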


\begin{proof}
Since $Q|g$, we have $\psi\left(r;(\F_q[x]/gQ)^{\times p}\right)\le \psi\left(r;(\F_q[x]/Q^2)^{\times p}\right)$, so it is enough to bound the latter by the right hand side of (\ref{eq:psiHgBound}). Let $c\in\field_q[x]$ be a prime power with $\deg c = r$ such that $c\bmod Q^2$ is a $p$-th power, i.e. $c\equiv u^p\pmod{Q^2}$. Writing explicitly $c-u^p=UQ^2$ for some $U\in\field_q[x]$ and taking derivatives, we get 
$c'=U'Q^2+2UQQ'=Q(QU'+2Q'U)$, hence $Q|c'$. 

First we assume $r>\deg Q$. If $2p<r/p$ we need to obtain the bounds with $2p$ in place of the minima in the exponents. Applying Lemma \ref{claim_bounding_number_of_derivatives} and using $\Lambda(c)\le r$ we obtain the required bound. If $2p>r/p$ then we can replace Lemma \ref{claim_bounding_number_of_derivatives} with the easier bound
$$\#\{c\in\F_q[x]:c\mbox{ monic},\deg c=r,Q|c'\}\le q^{r-\deg Q+r/p}$$ which follows by noting that there are at most $q^{r-\deg Q}$ possible values of $c'$ (since $\deg c'\le r-1,Q|c'$ and we are assuming $\deg Q>r$) and for each of them there are at most $q^{r/p+1}$ possible values of $c$. The requirement that $c$ is monic reduces the total exponent by 1. From this point we argue as in the case $2p<r/p$.

Finally if $r\le\deg Q$, since $Q|c'$ and $\deg c'<r=\deg Q$ we must have $c'=0$ and $c\in\F_q[x^p]$ can be written as $c=b^p$ with $\Lambda(c)=\Lambda(b)$. In this case we have (by the Prime Polynomial Theorem) $$\psi\left(r;(\F_q[x]/Q^2)^{\times p}\right) \le\sum_{\deg b=r/p}\Lambda(b)=\mathds 1_{p|r}\cdot\psi(r/p)\le q^{r/p}$$ as required.

\end{proof}

We are now ready to derive our main bound on $\mhg^r$, which applies in the range $r\le 2d$.

\begin{prop}\label{prop:mhg}
Let $g$ be a squarefree polynomial of degree $d$ and $1\le r\le 2d$. Then
$$
\mhg^r=O\left(\frac {q^{2d}}{\varphi(g)^2}q^{(1/p-1/2)r}+\frac {r\tau(g)q^{d}}{\varphi(g)}q^{(1-1/p)(r-2d)+\min(2p,r/p)}\right),
$$
where $\varphi(g)=\#(\F_q[x]/g)^\times$ is the Euler totient function and $\tau(g)$ is the number of monic divisors of $g$.
\end{prop}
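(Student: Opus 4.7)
The plan is to combine Proposition \ref{prop_inclusionExclusionMgr}, which expresses
\[
M^r_{\mathcal H_g}=-q^{-r/2}\Big(1+\frac{1}{\varphi(g)}\sum_{Q\mid g}\mu(g/Q)\,q^{\deg Q}\,\psi(r;(\F_q[x]/gQ)^{\times p})\Big),
\]
with the bounds of Lemma \ref{lem_psiHgBound}, splitting the sum over divisors $Q\mid g$ according to whether $\deg Q\ge r/2$ or $\deg Q<r/2$. The two ranges produce the two terms of the proposition.

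For divisors with $\deg Q\ge r/2$, a key refinement of Case B of Lemma \ref{lem_psiHgBound} is to observe that whenever $r<2\deg Q$, any prime power $c$ of degree $r$ with $c\equiv u^{p}\pmod{Q^{2}}$ must in fact equal $u^{p}$ as a polynomial: writing $c=u^{p}+UQ^{2}$ and noting that $\deg UQ^{2}\ge 2\deg Q>r$ unless $U=0$ forces $U=0$. Then $u$ itself is a prime power of degree $r/p$ with $\Lambda(c)=\Lambda(u)$, and the Prime Polynomial Theorem yields $\psi(r;(\F_q[x]/gQ)^{\times p})\le \psi(r/p)=q^{r/p}$. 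Summing the resulting per-divisor contributions $q^{\deg Q+r/p-r/2}/\varphi(g)$, and using the multiplicative identity
\[
\varphi(g)\sum_{Q\mid g}q^{\deg Q}=\prod_{P\mid g}(q^{\deg P}-1)(q^{\deg P}+1)=\prod_{P\mid g}(q^{2\deg P}-1)\le q^{2d},
\]
produces the first term $\dfrac{q^{2d}}{\varphi(g)^{2}}q^{(1/p-1/2)r}$ of the proposition.

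For divisors with $\deg Q<r/2$ (Case C of Lemma \ref{lem_psiHgBound}), I would invoke the lattice-based bound $\psi(r;(\F_q[x]/gQ)^{\times p})\le rq^{r-\deg Q+\min(2p,r/p)}$. The factors $q^{\deg Q}$ and $q^{-r/2}$ combine so that the per-divisor contribution $\dfrac{rq^{r/2+\min(2p,r/p)}}{\varphi(g)}$ is independent of $\deg Q$, and summing over the at most $\tau(g)$ such divisors — together with the inequality $q^{r/2}\le q^{d}$ valid for $r\le 2d$, and noting that the factor $q^{(1-1/p)(r-2d)}$ is $\le 1$ in this range — yields the second term $\dfrac{r\tau(g)q^{d}}{\varphi(g)}q^{(1-1/p)(r-2d)+\min(2p,r/p)}$ of the proposition.

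The main obstacle is establishing the Case C bound itself, which requires the full force of the lattice theory developed in Section \ref{sec:lattice}. The key sequence of reductions is: the characteristic-$p$ identity $c=u^{p}+UQ^{2}\Rightarrow c'\in Q\F_q[x]$ (Lemma \ref{claim_bounding_number_of_derivatives}), followed by counting short vectors in the rank-$(p-1)$ $\F_q[x^{p}]$-lattice $\Lambda_{Q}=Q\F_q[x]\cap V$ (with $V=\bigoplus_{i=0}^{p-2}x^{i}\F_q[x^{p}]$) via Lemma \ref{lem:bound_short_vectors} (as carried out in Lemma \ref{prop_shortbasis_count_lambdaQ}). The Case B sharpening above, via the pure-$p$-th-power observation, is essential for obtaining the first term in the precise shape $q^{2d}/\varphi(g)^{2}$ rather than a weaker form coming from naively applying the Case B bound of Lemma \ref{lem_psiHgBound}.
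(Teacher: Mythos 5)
Your skeleton---Proposition \ref{prop_inclusionExclusionMgr} followed by the case analysis of Lemma \ref{lem_psiHgBound} and the divisor-sum identity $\varphi(g)\sum_{Q\mid g}q^{\deg Q}=\prod_{P\mid g}(q^{2\deg P}-1)\le q^{2d}$---is the same as the paper's, but your treatment of the divisors with $r/2\le\deg Q<r$ rests on a false claim. You assert that a prime power $c$ of degree $r<2\deg Q$ with $c\equiv u^p\pmod{Q^2}$ must literally equal $u^p$, because ``$\deg(UQ^2)\ge 2\deg Q>r$ unless $U=0$.'' This ignores cancellation: the representative $u$ can be taken with $\deg u<2\deg Q$, but then $\deg u^p$ can be as large as $p(2\deg Q-1)$, and the leading terms of $u^p$ and $UQ^2$ can cancel to leave a polynomial of degree $r$. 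Nothing prevents an \emph{irreducible} polynomial of degree $r$ with $\deg Q<r<2\deg Q$ from lying in $(\F_q[x]/Q^2)^{\times p}$, and such a polynomial is certainly not a $p$-th power in $\F_q[x]$; this is exactly why the paper's bound in this intermediate range is the lattice-derived $rq^{r/2+(1/2-1/p)(r-2\deg Q)+\min(2p,r/p)}$ rather than $q^{r/p}$. The conclusion $c'=0$ (hence $c\in\F_q[x^p]$) is only forced when $r\le\deg Q$, since then $Q\mid c'$ and $\deg c'<\deg Q$. This is not a removable convenience: for $r>d$ the correctly bounded contribution of the divisors with $r/2<\deg Q<r$ exceeds your first term $\frac{q^{2d}}{\varphi(g)^2}q^{(1/p-1/2)r}$ and is precisely what produces the second term of the proposition (with exponent $(1/2-1/p)(r-2d)$, which is what the paper's argument actually yields), so your derivation would end with a bound that is genuinely too strong.

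A second, smaller issue is in your range $\deg Q<r/2$. The per-divisor contribution is indeed $\frac{r}{\varphi(g)}q^{r/2+\min(2p,r/p)}$, but since $r-2d\le 0$ the observation that $q^{(1-1/p)(r-2d)}\le 1$ runs the wrong way: it makes the target $\frac{r\tau(g)q^{d}}{\varphi(g)}q^{(1-1/p)(r-2d)+\min(2p,r/p)}$ \emph{smaller}, so it cannot absorb the surplus factor $q^{r/2-d}=q^{(r-2d)/2}$. What actually holds is $q^{(r-2d)/2}\le q^{(1/2-1/p)(r-2d)}$ (a nonpositive quantity times the smaller positive constant $1/2-1/p$ is larger), which again lands on the exponent $(1/2-1/p)(r-2d)$ appearing in the paper's proof rather than the $(1-1/p)(r-2d)$ you aim for.
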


\begin{rem} We will see below that $\frac{q^d}{\varphi(g)}=O_\epsilon(d^\epsilon)$ and $\tau(g)=O_\epsilon(q^{\epsilon d})$ for any $\epsilon>0$, so these factors will not affect our calculation of (the limit of) 1-level density.\end{rem}

\begin{proof}
By Proposition \ref{prop_inclusionExclusionMgr} we have
$$
|\mhg^r|\le q^{-r/2}\sum_{Q|g}\frac{q^{\deg Q}}{\varphi(g)}\psi\left(r; (\F_q[x]/gQ)^{\times p}\right)+O(q^{-r/2}).
$$
We can now separately bound the contribution from the different ranges appearing in Lemma 
\ref{lem_psiHgBound}. We factor $g=P_1\ldots P_l$, where $P_i$ are distinct primes. 

The contribution of 
the range $r\le\deg Q$ is bounded by (Lemma \ref{lem_psiHgBound})
\begin{multline*}
\frac {q^d}{\varphi(g)}q^{(1/p-1/2)r}
\sum_{Q|g}q^{-\deg(g/Q)}=\frac {q^d}{\phi(g)}
q^{(1/p-1/2)r}\sum_{Q|g}q^{-\deg Q}=\frac {q^d}{\varphi(g)}q^{(1/p-1/2)r}\prod_{i=1}^l\left(1+q^{-\deg 
P_i}\right)
 \\ \le \frac {q^{2d}}{\varphi(g)^2}q^{(1/p-1/2)r},\end{multline*}
since $$\varphi(g)\prod_{i=1}^l\left(1+q^{-\deg P_i}\right)=q^d\prod_{i=1}^l\left(1-q^{-\deg P_i}\right)\left(1+q^{\deg P_i}\right)\le q^d.$$
The contribution of the range $\deg Q<r<2\deg Q$ is bounded by (Lemma \ref{lem_psiHgBound} and $\deg Q\le d$)
\begin{multline*}\frac {q^d}{\varphi(g)}\sum_{Q|g}q^{\deg Q-d}rq^{(1/2-1/p)(r-2\deg Q)+\min(2p,r/p)}
 \\ \le \frac {rq^d}{\varphi(g)}\sum_{Q|g}q^{(1/2-1/p)(r-2d)+\min(2p,r/p)}
\le \frac {r\tau(g)q^d}{\varphi(g)}q^{(1/2-1/p)(r-2d)+\min(2p,r/p)}.\end{multline*}
Similarly, the contribution of $r\ge 2\deg Q$ is bounded by (Lemma \ref{lem_psiHgBound} and recall that $r\le 2d$)
$$\frac {r\tau(g)q^d}{\varphi(g)}q^{\frac 12(r-2d)+\min(2p,r/p)}\le\frac {r\tau(g)q^d}{\varphi(g)}q^{(1/2-1/p)(r-2d)+\min(2p,r/p)}.$$
Combining the above three bounds we obtain the stated bound.
\end{proof}

\subsection{Means of traces for the family $\AS^\ord_{d,g},\deg g=d$}

Throughout the present subsection $d$ is a natural number and $g\in\F_q[x]$ is a monic squarefree polynomial with $\deg g=d$. Recall that
$$\AS_{d,g}^\ord=\{f=h/g:h\in\F_q[x],\deg h=d,(h,g)=1\},$$
$$\mathcal H_g=\{f=h/g:h\in\F_q[x],\deg h<d,(h,g)=1\}.$$
The family $\AS_{d,g}^\ord$ decomposes as follows: \begin{equation}\label{eq:asdg_decomposition}\AS_{d,g}^\ord=\bigsqcup_{b\in\F_q}\{f+b:f\in\mathcal H_g\}\end{equation} (disjoint union).

\begin{prop}\label{prop:masdg}
Let $g$ be a squarefree polynomial of degree $d$ and $1\le r< 2d$.
$$
M^r_{\AS_{d,g}^\ord}=O\left(\frac {q^{2d}}{\varphi(g)^2}q^{(1/p-1/2)r}+\frac {r\tau(g)q^{d}}{\varphi(g)}q^{(1-1/p)(r-2d)+\min(2p,r/p)}\right),
$$
with $\varphi,\tau$ as in Proposition \ref{prop:mhg}.
\end{prop}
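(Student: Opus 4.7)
The plan is to derive Proposition \ref{prop:masdg} directly from Proposition \ref{prop:mhg} by averaging over the additive shifts that parametrize the fibers of the disjoint decomposition
$$\AS_{d,g}^\ord = \bigsqcup_{b \in \F_q} \{f + b : f \in \mathcal H_g\}$$
just recorded in (\ref{eq:asdg_decomposition}). In particular $|\AS_{d,g}^\ord| = q \cdot |\mathcal H_g|$, so the mean splits as
$$M^r_{\AS_{d,g}^\ord} = \frac{1}{q|\mathcal H_g|} \sum_{b \in \F_q} \sum_{f \in \mathcal H_g} T^r_{f+b}.$$

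The key input is Lemma \ref{lem:twist}, which gives $L(u,f+b,\psi) = L(\psi(b)u,f,\psi)$, where (exactly as in Section \ref{sec_full_as_family}) we write $\psi(b)$ for $\psi(\tr_{q/p} b)$ when $b \in \F_q$. Comparing roots of these polynomials shows $\rho_i(f+b,\psi) = \psi(b)\rho_i(f,\psi)$, and hence $T^r_{f+b} = \psi(b)^r T^r_f$. Substituting this back I would arrive at
$$M^r_{\AS_{d,g}^\ord} = \mean{\psi(b)^r}_{b \in \F_q} \cdot M^r_{\mathcal H_g}.$$
The inner $b$-average equals $\mathds 1_{p \mid r}$ by orthogonality of additive characters on $\F_q$ (this is the same computation already used in Section \ref{sec_full_as_family}, see (\ref{eq:rel_masfd2})). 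Consequently $M^r_{\AS_{d,g}^\ord} = \mathds 1_{p \mid r} \cdot M^r_{\mathcal H_g}$, and the stated bound is immediate from Proposition \ref{prop:mhg}.

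There is no significant obstacle here; the proposition is essentially a corollary of Proposition \ref{prop:mhg}, reflecting the fact that each $L$-function in $\AS_{d,g}^\ord$ is captured, up to a global phase on its normalized zeros, by one of the Dirichlet characters $\chi_f$ indexed by $\mathcal H_g$, and that the resulting $q$-fold redundancy from constant shifts is killed by additive character cancellation unless $p \mid r$. All the actual analytic work (the bounds on primes lying in $p$-th power cosets modulo $gQ$, the $\F_q[x]$-lattice input, etc.) has already been done in Section \ref{sec:hg}.
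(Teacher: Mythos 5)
Your proposal is correct and is essentially identical to the paper's own proof: the paper likewise combines the decomposition (\ref{eq:asdg_decomposition}) with Lemma \ref{lem:twist} to get $T^r_{f+b}=\psi(b)^rT^r_f$, averages over $b$ to obtain $M^r_{\AS_{d,g}^\ord}=\mathds 1_{p|r}\cdot M^r_{\mathcal H_g}$, and then invokes Proposition \ref{prop:mhg}. No gaps.
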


\begin{proof} By (\ref{eq:asdg_decomposition}) and Lemma \ref{lem:twist},
$$M^r_{\AS_{d,g}^\ord}=\frac 1{\#\AS_{d,g}^\ord}\sum_{f\in \AS_{d,g}^\ord}T^r_f=
\frac 1{q\cdot\#\mathcal H_g}\sum_{f_1\in \mathcal H_g,b\in\F_q}\psi(b)^rT^r_{f_1} =
\langle \psi(\tr_{q/p}b)^r\rangle_{b\in\F_q}\cdot M^r_{\mathcal H_g}=\mathds 1_{p|r}\cdot M^r_{\mathcal H_g},$$
and the assertion follows from Proposition \ref{prop:mhg}.\end{proof}

\subsection{Means of traces for the family $\AS^\ord_{d,g},\deg g=d-1$}

For the present subsection assume that $g\in\F_q[x]$ is monic, squarefree with $\deg g=d-1$. In this case we cannot directly derive a bound on $M^r_{\AS_{d,g}^\ord}$ from our bound on $\mathcal H_g$. In the present subsection we explain how the arguments in section \ref{sec:hg} need to be modified to produce a similar bound on $M^r_{\AS_{d,g}^\ord}$ in the present case.

\begin{rem}If there exists $\alpha\in\F_q$ with $g(\alpha)\neq 0$ we could apply a M\"obius transformation (with coefficients in $\F_q$) to the variable $x$ that moves $\alpha$ to $\infty$ and reduce the problem back to the case $\deg g=d$. However it can happen that no such $\alpha$ exists.\end{rem}

Recall that in section \ref{sec:ordas_to_dir} we defined the functions $\chi_f:\F_q[x]\to\C$ for $f=\F_q(x)$ by (\ref{eq:chihgc}). It is evident from the definition that $\chi_{f_1+f_2}=\chi_{f_1}\chi_{f_2}$. For $a\in\F_q$ we introduce the following definitions.
$$\Lambda^a(c)=\chi_{ax}(c)\Lambda(c),\: c\in\F_q[x]\mbox{ monic}$$ ($\Lambda$ is the von Mangoldt function),
$$\psi^a(r;N)=\sum_{\deg c=r}\Lambda^a(c),$$ where $N\subset(\F_q[x]/Q)^\times$ for some modulus $Q$. Define also
$$\mathcal{H}_g^a=\{f+ax:f\in\mathcal H_g\}.$$
Note that $\mathcal{H}_g^0=\mathcal H_g$ and (recall $\deg g=d-1$)
\begin{equation}\label{eq:union_hga} \AS_{d,g}^\ord=\bigsqcup_{0\neq a\in\F_q\atop{b\in\F_q}}\{f+b:f\in\mathcal H_g^a\}\end{equation}
(disjoint union).

By slightly modifying the proof of Proposition \ref{prop_inclusionExclusionMgr} (by way of Proposition \ref{prop_lhg_is_dirichlet_l} applied to $\chi_{f+ax}=\chi_{ax}\chi_f$ for $f\in\mathcal H_g$ and noting that $\delta(f+ax)=1$ in the notation of (\ref{eq:delta_f})) one can establish the following generalization:
$$M^r_{\mathcal H_g^a}=-q^{-r/2}\left(
1+\sum_{Q|g}\mu(g/Q)\frac{q^{\deg Q}}{\varphi(g)}\psi^a\left(r; (\F_q[x]/gQ)^{\times p}\right)
\right).$$
From here one can argue essentially verbatim as in section \ref{sec:hg} replacing $\mathcal H_g,\Lambda,\psi$ with $\mathcal H_g^a,\Lambda^a,\psi^a$ respectively to obtain the following analogue of Proposition \ref{prop:mhg} valid for $1\le r\le 2d-2$ (note that $\deg g=d-1$ so we need to replace $d$ with $d-1$):
$$
M^r_{\mathcal H_g^a}=O\left(\frac {q^{2d-2}}{\varphi(g)^2}q^{(1/p-1/2)r}+\frac {r\tau(g)q^{d-1}}{\varphi(g)}q^{(1-1/p)(r-2d+2)+\min(2p,r/p)}\right).
$$
Applying the arguments in the proof of Proposition \ref{prop:masdg} we see that the same bound holds for the shifted family $\{f+b:f\in H_g^a\}$ ($b\in\F_q$) and now from (\ref{eq:union_hga}) (and combining with Proposition (\ref{prop:masdg})) we obtain the following

\begin{prop}\label{prop:masdg1}
Let $g$ be a squarefree polynomial of degree $d$ or $d-1$ and $1\le r\le 2d-2$. Then
$$
M^r_{\AS_{d,g}^\ord}=O\left(\frac {q^{2\deg g}}{\varphi(g)^2}q^{(1/p-1/2)r}+\frac {r\tau(g)q^{\deg g}}{\varphi(g)}q^{(1-1/p)(r-2d+2)+\min(2p,r/p)}\right),
$$
with $\varphi,\tau$ as in Proposition \ref{prop:mhg}.
\end{prop}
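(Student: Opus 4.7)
The plan is to split on the value of $\deg g$. When $\deg g = d$ the bound is precisely Proposition \ref{prop:masdg}, so there is nothing to do. Hence I would concentrate on the remaining case $\deg g = d-1$, following closely the road-map sketched in the paragraph immediately preceding the statement.

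First, I would use the decomposition \eqref{eq:union_hga} to write
$$M^r_{\AS_{d,g}^\ord}=\frac{1}{q(q-1)\cdot\#\mathcal H_g}\sum_{0\neq a\in\F_q,\,b\in\F_q}\sum_{f\in\mathcal H_g^a}T^r_{f+b}.$$
By Lemma \ref{lem:twist}, $T^r_{f+b}=\psi(b)^rT^r_f$; averaging over $b$ reduces the inner sum to $\mathds 1_{p\mid r}\cdot M^r_{\mathcal H_g^a}$, so it suffices to bound $M^r_{\mathcal H_g^a}$ uniformly in $a\in\F_q^\times$ with the bound of the proposition.

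Next I would establish the analogue of Proposition \ref{prop_inclusionExclusionMgr} for the twisted family: for $f\in\mathcal H_g$ we have $\chi_{f+ax}=\chi_{ax}\chi_f$ and, since $\deg(h+axg)=d>\deg g$, the correction factor $\delta(f+ax)$ in \eqref{eq:delta_f} equals $1$, so Proposition \ref{prop_lhg_is_dirichlet_l} still gives $L(u,\chi_{f+ax})=L(u,f+ax,\psi)$. Running Lemma \ref{dirCharacterAvgLemma} verbatim but with the character group twisted by $\chi_{ax}$ yields
$$M^r_{\mathcal H_g^a}=-q^{-r/2}\left(1+\sum_{Q\mid g}\mu(g/Q)\frac{q^{\deg Q}}{\varphi(g)}\psi^a\!\left(r;(\F_q[x]/gQ)^{\times p}\right)\right),$$
where $\psi^a$ is the von Mangoldt sum weighted by $\chi_{ax}$. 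Since $|\chi_{ax}(c)|\le 1$, we have $|\psi^a(r;N)|\le\psi(r;N)$ for any subset $N$, so the lattice-theoretic estimates of Lemma \ref{claim_bounding_number_of_derivatives} and Lemma \ref{lem_psiHgBound} go through unchanged. Substituting and repeating the bookkeeping in the proof of Proposition \ref{prop:mhg} (with $d$ replaced by $\deg g=d-1$ throughout) yields
$$M^r_{\mathcal H_g^a}=O\!\left(\frac{q^{2d-2}}{\varphi(g)^2}q^{(1/p-1/2)r}+\frac{r\tau(g)q^{d-1}}{\varphi(g)}q^{(1-1/p)(r-2d+2)+\min(2p,r/p)}\right)$$
uniformly in $a\in\F_q^\times$, valid for $1\le r\le 2(d-1)=2d-2$.

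Assembling the pieces, the averaging over $a,b$ and Proposition \ref{prop:masdg} give the stated uniform bound with $\deg g$ in the exponents, which is exactly the assertion of Proposition \ref{prop:masdg1}. The main obstacle is really the verification in the previous paragraph: one must check that $\chi_{f+ax}=\chi_{ax}\chi_f$ is still a Dirichlet character modulo $g^2$ (not just a Hecke character) so that Lemma \ref{dirCharacterAvgLemma} applies, and that replacing $\Lambda$ by $\Lambda^a=\chi_{ax}\Lambda$ does not spoil the lattice count — the latter being automatic once one observes that the lattice argument only uses the cardinality of the set of primes landing in a given coset of $(\F_q[x]/gQ)^{\times p}$, not the character values weighting them. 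Everything else is bookkeeping identical to Sections \ref{sec:hg} and the proof of Proposition \ref{prop:masdg}.
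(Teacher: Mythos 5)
Your proposal is correct and follows essentially the same route as the paper: reduce to the case $\deg g=d-1$, decompose via \eqref{eq:union_hga} and Lemma \ref{lem:twist}, twist the inclusion-exclusion identity of Proposition \ref{prop_inclusionExclusionMgr} to obtain the formula with $\psi^a$, and use $|\psi^a(r;N)|\le\psi(r;N)$ so that the lattice bounds of Section \ref{sec:hg} apply verbatim with $d$ replaced by $d-1$. One small correction to your closing remarks: $\chi_{f+ax}$ is \emph{not} a Dirichlet character modulo $g^2$ (it is a Hecke character of modulus $g^2\cdot\infty^2$), but this is harmless because, as your own displayed identity shows, orthogonality is only ever applied to the genuine Dirichlet characters $\chi_f$ while $\chi_{ax}$ rides along as the bounded multiplicative weight inside $\Lambda^a$.
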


\subsection{1-level density for $\AS^\ord_{d,g}$}

In the present subsection we estimate the 1-level density for the family $\AS^\ord_{d,g}$, completing the proof of Theorem \ref{thm:main3}. Throughout the rest of the section $d$ is a natural number and $g\in\F_q[x]$ is monic squarefree with $\deg g\in\{d,d-1\}$. We also fix a test function $\Phi\in\mathcal S(\R)$ with $\mathrm{supp}(\Phi)\subset(-1+\delta,1-\delta)$, for a fixed $\delta>0$. In the present section the asymptotic $O$-notation may depend on $\Phi,\delta$, i.e $O(\cdot)=O_{\Phi,\delta}(\cdot)$, but not on any other parameters. 

To shorten notation we denote 
$M^r=M_{\AS^\ord_{d,g}}^r.$ Note that $M^{-r}=\overline{M^r}$. For $f\in\AS^\ord_{d,g}$ we have $\g=\g(C_f)=(d-1)(p-1)$ and each $L(u,f,\psi)$ has $2d-2$ zeros.
First we write as in (\ref{eqn_FourierExpnansion}):
\begin{multline}\label{eq:fourier_ord}
\mean{\Wone{f}}_{f\in\AS^\ord_{d,g}}=\hat{\Phi}(0)+\frac{1}{2d-2}\sum_{r=1}^\infty \left(
\hat{\Phi}\left(\frac{r}{2d-2}\right)M^r+
\hat{\Phi}\left(\frac{-r}{2d-2} \right)\overline{M^r}
\right)\\=\hat{\Phi}(0)+\frac{1}{2d-2}\sum_{r=1}^{(1-\delta)(2d-2)} \left(
\hat{\Phi}\left(\frac{r}{2d-2}\right)M^r+
\hat{\Phi}\left(\frac{-r}{2d-2} \right)\overline{M^r}
\right).
\end{multline}
We want to apply the bound in Proposition \ref{prop:masdg1}, but we first need to bound the quantities $q^{\deg g}/\varphi(g)$ and $\tau(g)$ ($\varphi,\tau$ are the Euler totient and divisor functions respectively).

\begin{lem}\label{lem:bound_phi_tau}Let $g\in\F_q[x]$ be squarefree of degree $d$ and $\epsilon>0$ a fixed constant. Then
\begin{enumerate}
\item[(i)]$\frac{q^d}{\varphi(g)}=O_\epsilon(d^\epsilon).$
\item[(ii)]$\tau(g)=O_\epsilon(q^{\epsilon d}).$
\end{enumerate}
Here the implicit constants depend only on $\epsilon$ (not on $q,d$).
\end{lem}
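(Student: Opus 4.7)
The plan is to factor $g = P_1 \cdots P_l$ as a product of distinct monic prime polynomials of degrees $d_1,\ldots,d_l$ with $\sum d_i = d$, and then reduce both parts to extremal problems on this multiset of degrees using the Prime Polynomial Theorem bound $\pi_q(k) \le q^k/k$ for the number $\pi_q(k)$ of monic primes of degree $k$ in $\F_q[x]$. Throughout the argument I will need to verify that all the implicit constants are absolute in $q$, so that the dependence in $O_\epsilon(\cdot)$ is really only on $\epsilon$.

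For part (i), I would write $\varphi(g) = \prod_{i=1}^l (q^{d_i}-1)$, so that
$$\log\frac{q^d}{\varphi(g)} \;=\; -\sum_i \log(1-q^{-d_i}) \;\le\; 2\sum_i q^{-d_i},$$
using $-\log(1-x)\le 2x$ on $[0,1/2]$ together with $q^{-d_i}\le 1/2$. To bound the right-hand side I would split at the cutoff $K=\lceil \log_q d\rceil$: indices with $d_i\le K$ contribute at most $\sum_{k=1}^K\pi_q(k)q^{-k}\le\sum_{k=1}^K 1/k = O(\log\log d)$ directly from the PNT bound, and indices with $d_i>K$ contribute at most $(d/K)\cdot q^{-K}=O(1/\log_q d)$ because the constraint $\sum d_i\le d$ bounds the number of such indices by $d/K$. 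This yields the much stronger bound $q^d/\varphi(g)=O((\log d)^2)$ uniformly in $q$, which trivially implies (i).

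For part (ii), squarefreeness gives $\tau(g)=2^{\omega(g)}=2^l$, so it suffices to show $l=O(d/\log_q d)$ uniformly in $q\ge 2$ (for $d$ beyond an absolute threshold). I would again split at $K=\lfloor\log_q d\rfloor$: the number of indices with $d_i\le K$ is at most $\sum_{k=1}^K \pi_q(k)\le\sum_{k=1}^K q^k/k$, which a routine geometric-series estimate bounds by $O(q^K/K)=O(d/\log_q d)$ with an absolute constant for $q\ge 2$; and the number of indices with $d_i>K$ is at most $d/(K+1)=O(d/\log_q d)$ by the degree constraint. Hence $\tau(g)\le 2^{Cd/\log_q d}=q^{Cd\log 2/\log d}$, where the identity $\log_q d\cdot\log q=\log d$ crucially eliminates $q$ from the exponent. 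Since $\log 2/\log d\to 0$ uniformly in $q$ as $d\to\infty$, this is $\le q^{\epsilon d}$ as soon as $d\ge 2^{C\log 2/\epsilon}=:d_0(\epsilon)$; for smaller $d$ the trivial bound $\tau(g)\le 2^d\le 2^{d_0(\epsilon)}$ is a constant depending only on $\epsilon$, which is absorbed into $C_\epsilon$.

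The only mildly technical step is verifying the bound $\sum_{k=1}^K q^k/k=O(q^K/K)$ with an absolute constant valid for all $q\ge 2$; I would do this by splitting the sum at $k=K/2$, bounding the large-$k$ half by pulling out $q^K/K$ and summing a geometric series in $q^{-1}$, and bounding the small-$k$ half by $(K/2)q^{K/2}\ll q^K/K$. This is what makes the final estimates uniform in $q$; everything else is bookkeeping.
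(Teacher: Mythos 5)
Your argument is correct and is exactly the adaptation of the classical Mertens/PNT proofs that the paper gestures at but omits (the paper simply cites the $\mathbb{Z}$-analogues in Montgomery--Vaughan and asserts the proofs carry over). You actually supply the details, including the one genuinely delicate point --- uniformity of all constants in $q$, which rests on $\pi_q(k)\le q^k/k$ and the identity $\log_q d\cdot\log q=\log d$ --- so this is a complete and if anything stronger version of what the paper intends (your part (i) gives $O((\log d)^2)$ rather than $O_\epsilon(d^\epsilon)$).
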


\begin{proof} Over $\Z$ the analogous bounds $\frac n{\varphi(n)}=O_\epsilon(\log^\epsilon n),\tau(n)=n^\epsilon$ are well-known consequences of the Prime Number Theorem. See \cite{MoVa06}*{Theorems 2.9,2.11} for even stronger statements. The proofs can be easily adapted to $\F_q[x]$ (we omit the details for brevity).\end{proof}

Now from (\ref{eq:fourier_ord}), Proposition \ref{prop:masdg1} and Lemma \ref{lem:bound_phi_tau} we have (recall that $\Phi,\delta>0$ are fixed and we also use $\min(r/p,2p)\le\sqrt{2r}\le\sqrt d$)
\begin{multline*}\mean{\Wone{f}}_{f\in\AS^\ord_{d,g}}-\hat\Phi(0)\ll\frac 1d\sum_{r=1}^{(1-\delta)(2d-2)}|M^r|
 \\ \le 
\frac 1d\left(\frac {q^{\deg g}}{\phi(g)}\right)^2\cdot\sum_{r=1}^{(1-\delta)(2d-2)}\left(q^{(1/p-1/2)r}+d\tau(d)q^{(1-1/p)(r-2d+2)+\min(2p,r/p)}\right)
 \\ \ll_\epsilon
d^{\epsilon-1}\left(\sum_{r=1}^{(1-\delta)(2d-2)}q^{(1/p-1/2)r}\right)+d^{\epsilon}q^{\epsilon d}
q^{(2/p-2)\delta d+2\sqrt d}\to 0\ll_\epsilon d^{\epsilon-1}+d^\epsilon q^{(\epsilon+(2/p-2)\delta)d+2\sqrt d}\to 0
\end{multline*}
for $d\to\infty$ as long as $\epsilon<\delta$, uniformly in $q,g$. This concludes the proof of Theorem \ref{thm:main3}.

\bibliography{../Tex/mybib}
\bibliographystyle{amsrefs}

\end{document}